\documentclass[11pt]{article}
\usepackage[margin=0.95in]{geometry}
\usepackage{graphicx,psfrag}
\usepackage{amsmath,amsthm,amsfonts,amssymb,epsfig,verbatim, fge, bm}
\usepackage{xcolor}
\usepackage[pagebackref=true, colorlinks, citecolor=blue, linkcolor=blue]{hyperref}

\usepackage{upref}

\usepackage[normalem]{ulem}
\usepackage{bbm}
\usepackage{circledsteps}
\usepackage{graphicx}
\usepackage{tikz}
\usetikzlibrary{arrows.meta,decorations.pathreplacing,positioning, calc, decorations.pathmorphing}
\usepackage{enumitem}
\usepackage{mathrsfs}

\usepackage{enumitem}
\setlist{
  topsep=4pt,
  itemsep=2pt,
  parsep=0pt
}

\newtheorem{thm}{Theorem}[section]
\newtheorem{lem}[thm]{Lemma}
\newtheorem{prop}[thm]{Proposition}

\newtheorem{df}[thm]{Definition}

\newtheorem*{rem}{Remark}

\numberwithin{equation}{section}

\newcommand{\E}{\mathbf{E}}
\newcommand{\G}{\mathbb{G}}
\newcommand{\prob}{\mathbf{P}}

\newcommand{\C}{\mathbb{C}}

\newcommand{\W}{\mathsf{W}}
\newcommand{\sZ}{\mathsf{Z}}
\DeclareMathOperator{\Var}{Var}
\DeclareMathOperator{\Cov}{Cov}

\newcommand{\M}{\mathsf{M}}
\newcommand{\B}{\mathsf{B}}
\renewcommand{\setminus}{\mathbin{\fgebackslash}}
\newcommand{\bB}{\mathbb{B}}
\newcommand{\bs}{\bm{\sigma}}
\newcommand{\bp}{\bm{\pi}}
\newcommand{\sw}{\mathfrak{S}}
\newcommand{\bG}{\mathbf{G}}
\newcommand{\cE}{\mathcal{E}}
\newcommand{\I}{\mathsf{I}}
\newcommand{\ind}{\mathbf{1}}

\newcommand{\cF}{\mathcal{F}}
\newcommand{\Ggood}[2]{{#1}^{\! [#2]}}

\begin{document}
    \title{Central Limit Theorem of Maximum Weight Matching on Random Graphs with Prescribed Degrees}
    \author{Shichen Jing \thanks{University of Minnesota. Email: jing0101@umn.edu.} 
    \and  Wai-Kit Lam\thanks{National Taiwan University. Email: waikitlam@ntu.edu.tw. The research of W.-K. L. is supported by the National Science and Technology Council in Taiwan grant 113-2115-M-002-009-MY3 and the NCTS Young Theoretical Scientist Award.} 
		\and Arnab Sen\thanks{University of Minnesota. Email: arnab@umn.edu.  The research of A.-S. is partly supported by Simons Foundation MP-TSM-00002716.} }
	\date{}
	\maketitle

    \begin{abstract}
    We prove an annealed central limit theorem for the weight of the maximum
weight matching on uniformly random simple graphs with prescribed,
uniformly bounded degrees and i.i.d.\ exponential edge weights. In particular, the result applies to random $d$-regular
graphs for every fixed $d \ge 2$.
The proof separates the fluctuations arising from the edge weights from
those arising from the graph. The correlation decay estimate of Lam and
Sen \cite{lamsen2026} yields Gaussian fluctuations for the former.  The main difficulty is to analyze the fluctuations of the conditional mean of the optimal weight
given the graph. To address this, we prove a stronger perturbative correlation decay estimate that, together with a variance bound, reduces the problem to the central limit theorem of Barbour and R\"ollin \cite{BarbourRollin} for local statistics of the configuration model.

    \end{abstract}
	

\section{Introduction}

Let $G=(V(G),E(G))$ be a finite graph. A matching in $G$ is a
collection of edges, no two of which share a vertex. Assign i.i.d.\
continuous weights $(w_e)_{e\in E(G)}$ to the edges, and define the
weight of a matching $M$ by $\sum_{e\in M}w_e$. Let $\M_G$ denote
the almost surely unique maximum weight matching (MWM), and write
$\W_G:=\sum_{e\in\M_G}w_e$ for its weight.

First-order asymptotics for this optimization problem have been
established for several locally convergent graph models. In these
settings, the limiting weight per vertex, i.e., $|V(G)|^{-1} \W_G$, is determined by
the local weak limit and the edge-weight distribution. Examples include uniform labeled trees with i.i.d.\ nonnegative continuous edge weights of finite mean \cite{AldousSteele};
random $d$-regular graphs, $d\ge2$, and sparse Erd\H{o}s--R\'enyi
graphs $G(n,\lambda/n)$, $\lambda>0$, with exponential edge weights
\cite{Gamarnik}; and configuration models converging locally to
unimodular Galton--Watson trees, under suitable degree assumptions,
with i.i.d.\ continuous edge weights of finite mean
\cite{enriquez2025optimal}. For exponential weights, Lam and Sen \cite{lamsen2026} established a law of large numbers for
bounded-degree graph sequences converging locally to arbitrary
unimodular random trees.

The next natural question concerns fluctuations of the optimal weight
about its mean. Cao \cite{Cao} proved an annealed CLT for MWM on sparse
Erd\H{o}s--R\'enyi graphs with exponential weights. A corresponding CLT
was established in \cite{SturmWemheuer} for a class of sparse
inhomogeneous random graphs. Beyond the locally tree-like setting,
a CLT for MWM on $d$-dimensional tori was obtained in
\cite{KrishnanRay} for a class of absolutely continuous weight laws
including Gaussian weights.

Another classical setting is provided by mean-field minimum-weight
perfect matching problems on complete and complete bipartite graphs
with independent edge costs. Several predictions of M\'ezard and Parisi
based on replica symmetry \cite{MezardParisi1985,MezardParisi1986}
were later established rigorously by Aldous and W\"astlund
\cite{Aldous2001,Wastlund2012}. A central limit theorem was recently
proved for the bipartite random assignment problem with i.i.d.\
uniform costs \cite{Mordant2026}. To the best of our knowledge, a
corresponding CLT for the nonbipartite minimum-weight perfect matching
problem remains open.

In this paper, we establish an annealed CLT for uniformly random simple
graphs with prescribed bounded degrees, including random regular graphs.
For each $n\ge2$, let
$\mathbf d^{(n)}=(d_1^{(n)},\ldots,d_n^{(n)})$ be a graphical degree
sequence satisfying
\begin{equation} \label{eq: degree_seq_cond}
\sum_{i=1}^n d_i^{(n)}\ge n, \qquad
\max_{1\le i\le n}d_i^{(n)}\le D,
\end{equation}
where $D\ge2$ is fixed. Let $\G_n$ be chosen uniformly from the simple
graphs on $\{1,\ldots,n\}$ with degree sequence $\mathbf d^{(n)}$.
Independently of the graph, assign i.i.d.\ $\mathrm{Exp}(1)$ weights
to its edges. Here and below, $\E$ and $\Var$ average over both the
graph and the edge weights.
\begin{thm}
\label{thm: annealed_CLT}
Let $\G_n$ be a uniformly random simple graph with degree sequence
$\mathbf d^{(n)}$ satisfying \eqref{eq: degree_seq_cond}, equipped with
i.i.d.\ $\mathrm{Exp}(1)$ edge weights independent of the graph. Then
\[\frac{\W_{\G_n}-\E\W_{\G_n}}
{\sqrt{\Var(\W_{\G_n})}}
\stackrel{d}{\rightarrow}\mathrm N(0,1) \qquad\text{as }n\to\infty.
\]
\end{thm}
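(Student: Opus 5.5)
We decompose the total fluctuation into a weight part and a graph part:
\[
\W_{\G_n}-\E\W_{\G_n}=\bigl(\W_{\G_n}-\E[\W_{\G_n}\mid\G_n]\bigr)+\bigl(\E[\W_{\G_n}\mid\G_n]-\E\W_{\G_n}\bigr)=:X_n+Y_n .
\]
The plan has four steps: a quenched CLT for $X_n$, a CLT for $Y_n$, a variance lower bound of order $n$, and a combination argument. Write $\sigma_n^2(G)=\Var(\W_G)$ for a fixed graph $G$ with exponential weights.

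\textbf{Quenched CLT for $X_n$.} The correlation decay estimate of Lam and Sen \cite{lamsen2026} shows that whether an edge belongs to $\M_G$ is determined, up to a small error, by the weights in a bounded neighbourhood of that edge. Hence $\W_G$ is approximated by a sum of local functions of the weights, with dependency neighbourhoods of bounded size when the degree is at most $D$. Stein's method for local dependence (or a martingale CLT over the edge-exposure filtration) then gives
\[
\frac{X_n}{\sigma_n(\G_n)}\to \mathrm N(0,1)
\]
conditionally on $\G_n$, in probability over $\G_n$. We also need $\sigma_n^2(\G_n)\asymp n$ with high probability. The upper bound follows from Efron--Stein. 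For the lower bound, we resample the weights of many well-separated edges; each resampling changes $\W$ by an independent non-degenerate amount. Finally, $\sigma_n^2(\G_n)/n$ concentrates, since it too is approximately a sum of local quantities.

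\textbf{CLT for $Y_n$.} Set $f(G)=\E[\W_G\mid G]$. Correlation decay suggests
\[
f(G)\approx\sum_v \phi\bigl(B_R(v,G)\bigr),
\]
where $\phi$ is a local statistic of the radius-$R$ ball around $v$. We need more than this, namely a \emph{perturbative} estimate: if $G$ is modified by a switching of two edges, then $f$ changes by an amount determined, up to an error decaying in $R$, by the $R$-neighbourhood of the switched edges. Uniform local approximation is not enough because the errors must not accumulate over $n$ terms; what has to be small is the $L^2$ error of $f-\sum_v\phi$. To control it, we bound the variance of the remainder. Expressing $\Var$ under the configuration model through switchings (an Efron--Stein type inequality for switchings), the variance of the remainder is at most $n$ times the squared switching error, which is $o(n)$ as $R\to\infty$. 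The local statistic $\sum_v\phi(B_R(v))$ then satisfies the CLT of Barbour and R\"ollin \cite{BarbourRollin} on the configuration model. Conditioning on simplicity, an event of probability bounded away from zero, transfers the result to $\G_n$. The degenerate case $\Var(Y_n)=o(n)$ is harmless, because $X_n$ then dominates.

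\textbf{Combination and main obstacle.} Since $X_n$ is conditionally centred given $\G_n$ and its conditional variance concentrates at $s_n^2\asymp n$, the characteristic functions factor asymptotically:
\[
\E\, e^{it(X_n+Y_n)/\sqrt n}\approx \E\, e^{itY_n/\sqrt n}\,e^{-t^2s_n^2/2n}.
\]
Together with the CLT for $Y_n$ (along subsequences on which $\Var(Y_n)/n$ converges), this gives the theorem with $\Var(\W_{\G_n})=s_n^2+\Var(Y_n)$. The main obstacle is the perturbative correlation decay for $f$ under switchings. A switching creates new cycles and alters the local tree structure, and the effect on the optimal matching must be shown to decay with distance uniformly over bounded-degree graphs, not merely on trees. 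I would first try to prove it by coupling the recursive message-passing (cavity) variables of \cite{lamsen2026} on $G$ and on the switched graph, showing that their discrepancy contracts geometrically away from the switched edges.
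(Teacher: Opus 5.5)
Your overall architecture matches the paper's. It uses the same weight/graph decomposition and concentration of the quenched variance. It controls an $L^2$ local approximation of $\E_w\W$ by a perturbative estimate inside an Efron--Stein bound derived from switchings, applies Barbour--R\"ollin to the local statistic, and finishes with a subsequence combination. Your characteristic-function factorization is equivalent to the paper's conditional Kolmogorov-distance argument.

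Two steps do not go through as written. The first is the transfer from $\C_n$ to $\G_n$. Conditioning on $\{\C_n\text{ simple}\}$ does not preserve a distributional limit, even though this event has probability bounded away from zero. It only bounds the likelihood ratio by $1/\prob(\C_n\text{ simple})$. That preserves tightness, convergence in probability, and variance bounds up to constants, but not Gaussianity: a normalized sum of i.i.d.\ signs conditioned to be positive has a half-normal limit. What you need is asymptotic independence of the statistic from the simplicity event. The paper gets this from Janson's switching theorem. The switches that remove loops and multiple edges are tight in number, and each changes $\E_w\W$ by at most $4$. So the normalized statistic moves by $o_p(1)$, and the CLT transfers to $\G_n$. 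A uniform-integrability argument then identifies the centering and variance under $\G_n$ with those under $\C_n$.

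The second, more serious gap is that the perturbative estimate, which is the heart of the proof, is only a plan, and the plan misses what makes it hard.

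(i) Correlation decay is available only when the relevant neighbourhoods are trees, so you should not aim at general bounded-degree graphs. The paper prunes $\C_n$ to its $L$-good edges, with $L=AR$. The number of pruned edges has second moment $O(D^{4L})$, and deleting a good edge commutes with pruning. It then proves the estimate for a single edge deletion, on simple graphs all of whose $AR$-neighbourhoods are trees.

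(ii) The variance bound does not need the increment of $\E_w\W$ to be approximately local. It needs that increment to match, up to $Ce^{-cR}$, the increment of your \emph{specific} local statistic. That increment collects the changes of $\ind_{\{e\in\M_{\bB_e^R(G)}\}}$ over up to $O(D^R)$ edges $e$, each computed in its own ball. So there is no single message system to couple, and per-site geometric contraction does not by itself control a sum over $D^r$ edges at distance $r$. The paper splits by distance:
\begin{itemize}
\item Near $f$, it uses plain correlation decay.
\item In the annulus $\delta R\le d(e,f)\le R$, it uses memorylessness to bound the probability that $e$ lies on the alternating path $\M_{\bB_e^R(G)}\triangle\M_{\bB_e^R(G)\setminus f}$ by a product of transition factors. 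It then obtains a \emph{summed} bound from the $\ell^1$ contraction $\sum_y J_{x,y}\le 1-(2D)^{-D}$ of the $q$-Jacobian coefficients, together with their stability when the ball is truncated.
\item Far from $f$, it needs $A$ large. A global alternating path can return through a long cycle only by escaping the tree neighbourhood, and $A$ large makes this rare enough to beat the $D^{\delta R}$ nearby edges the path could then hit.
\end{itemize}

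The same accumulation problem affects your quenched CLT. Replacing $\W_G$ by a sum of radius-$r$ local terms costs $O(ne^{-cr})$ in $L^2$. This forces $r\gtrsim\log n$, which is too large for a local-dependence bound unless the decay rate is large compared with $\log D$. The paper instead uses Chatterjee's perturbative normal approximation, which only requires the edge bonuses to be localized. It applies this after deleting one edge from each short cycle of $\G_n$.
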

No convergence of the empirical degree distributions is assumed. In
particular, the theorem applies to random $d$-regular graphs for every
fixed $d\ge2$. The proof relies essentially on the memoryless property
of the exponential distribution. Extending the result to a broader
class of continuous edge-weight distributions remains an open problem.

We also briefly discuss the unweighted matching problem, which
maximizes the number of matched edges. Karp and Sipser
\cite{KarpSipser} determined the limiting matching density for sparse
Erd\H{o}s--R\'enyi graphs; see also \cite{AronsonFriezePittel}. For bounded-degree locally convergent graph sequences, the normalized
matching number converges \cite{ElekLippner}. This result was extended
beyond bounded degree, with the limit characterized through the local
weak limit \cite{BordenaveLelargeSalez}. A Gaussian CLT for the matching number is known
for sparse Erd\H{o}s--R\'enyi graphs at every fixed mean degree
\cite{Kreacic2017,GlasgowKwanSahSawhney}. For subcritical bounded-degree configuration models, a general CLT also applies to the matching number,  provided its variance grows
linearly  \cite{AthreyaYogeshwaran}. A related problem, maximizing weight among maximum-cardinality matchings, was studied in \cite{enriquez2026optimal}.

A positive-temperature analogue of MWM is the disordered monomer--dimer
model. Gaussian fluctuations of its free energy are known on cylinder
graphs \cite{DeyKrishnan} and, under suitable moment assumptions, on
general bounded-degree graphs \cite{lamsenpositivetemp}.

More broadly, fluctuation results for optimization problems on random
regular and configuration-model graphs remain limited. For example, interpolation
methods yield laws of large numbers for maximum cut, maximum bisection,
and the independence number on random regular graphs
\cite{BayatiGamarnikTetali,Salez,Huang}, but do not by themselves give
the corresponding fluctuation laws. The difficulty created by prescribed
degrees is also reflected in CLTs for the giant-component size: in the
fixed supercritical regime, the configuration-model result
\cite{BarbourRollin} appeared decades after the Erd\H{o}s--R\'enyi
result \cite{Pittel1990}.

The guiding principle behind our proof is spatial correlation decay,
which allows the membership of an edge in the MWM to be approximated
using only the weights in a finite neighborhood. For exponential
weights, \cite{Gamarnik} established correlation decay through recursive
distributional equations on regular and Poisson Galton--Watson trees.
Exponential correlation decay was subsequently proved in
\cite{lamsen2026} uniformly over bounded-degree tree neighborhoods,
without assuming regularity. That paper also conjectured that
correlation decay holds uniformly over bounded-degree graphs with
i.i.d.\ nonnegative continuous edge weights. Beyond exponential weights,
qualitative correlation decay was established in \cite{KangLiu2026}
for a class of continuous edge-weight laws on unimodular
Galton--Watson trees whose degree distribution has a finite second
moment. Using the representation
\[ \W_G=\sum_{e\in E(G)}w_e\mathbf{1}_{\{e\in\M_G\}}, \]
correlation decay allows us to approximate $\W_G$ by a sum of local,
weakly dependent contributions.

For deterministic graphs, combining correlation decay with suitable
moment and variance bounds is now a standard route to a CLT via
Chatterjee's normal approximation method \cite{Chatterjee14}.
In our setting, this yields a CLT for deterministic bounded-degree graphs with
linearly many edges and diverging girth. The same normal approximation
method underlies Cao's result \cite{Cao} for sparse
Erd\H{o}s--R\'enyi graphs. Indeed, the optimal weight can be realized
on the deterministic complete graph $K_n$ using independent weights
$B_eX_e$, where
$B_e\sim\operatorname{Bernoulli}(\lambda/n)$,
$X_e\sim\operatorname{Exp}(1)$, and all these variables are independent.

In the configuration model, however, the prescribed degrees introduce
dependence between edges through the random pairing of half-edges.
Conditioning on the graph allows us to apply the preceding method,
but the resulting quenched CLT does not by itself yield an annealed
CLT. One must also analyze the fluctuations of
$\E_w\W_{\G_n}$ and the concentration of $\Var_w(\W_{\G_n})$ as the
graph varies, where $\E_w$ and $\Var_w$ denote expectation and variance
over the edge weights alone.

The main new ingredient is a perturbative correlation decay estimate
showing that the quenched mean and its local approximation respond
nearly identically to the deletion of a single edge. Combined with an
Efron--Stein-type variance bound, this estimate allows us to approximate
the fluctuations of the quenched mean by those of a local graph
statistic and apply the CLT of Barbour and R\"ollin
\cite{BarbourRollin} for the configuration model, as explained next.

\subsection{Proof Outline}  \label{subsec:outline_intro}

To prove Theorem~\ref{thm: annealed_CLT}, we decompose the fluctuations of
$\W_{\G_n}$ into those arising from the edge weights and those arising from
the randomness of the graph:
\begin{equation}\label{eq:main_decomp}
\frac{\W_{\G_n}-\E\W_{\G_n}}{\sqrt{\Var(\W_{\G_n})}}
= \sqrt{\frac{\Var_w(\W_{\G_n})}{\Var(\W_{\G_n})} } \frac{\W_{\G_n} -\E_w\W_{\G_n}}{\sqrt{\Var_w(\W_{\G_n})}} +  \sqrt{\frac{\Var_{\G_n}(\E_w\W_{\G_n})}{\Var(\W_{\G_n})}}\frac{\E_w\W_{\G_n}-\E_{\G_n}\E_w\W_{\G_n}}
{\sqrt{\Var_{\G_n}(\E_w\W_{\G_n})}}
\end{equation}
We introduce some notations for the two terms in this decomposition. For any deterministic
simple graph $G$ satisfying $\Var_w(\W_G)>0$, define
\[ X(G) = \frac{\W_G-\E_w\W_G}{\sqrt{\Var_w(\W_G)}}.\]
For the
random graph $\G_n$, define the standardized quenched mean 
\[ Y_n^{\mathrm s}   = \frac{\E_w\W_{\G_n}-\E_{\G_n}\E_w\W_{\G_n}}
{\sqrt{\Var_{\G_n}(\E_w\W_{\G_n})}},\]
with the convention that $Y_n^{\mathrm s} = 0$ if $\Var_{\G_n}(\E_w\W_{\G_n})=0$. Here, the superscript $\mathrm s$ stands for the simple-graph model and distinguishes this quantity from the analogous standardized quenched mean for the configuration model, denoted later by $Y_n^{\mathrm c}$. Also, define
\begin{align*}
Q_n^{\mathrm s}:=\frac{\Var_w(\W_{\G_n})}{\E_{\G_n}\Var_w(\W_{\G_n})}, \ \ \ \alpha_n
:=\sqrt{\frac{\E_{\G_n}\Var_w(\W_{\G_n})}{\Var(\W_{\G_n})}}, \ \ \  
\beta_n := \sqrt{ \frac{\Var_{\G_n}(\E_w\W_{\G_n})} {\Var(\W_{\G_n})}}
\end{align*}
Thus, $Q_n^{\mathrm s}$ is the normalized quenched variance. 
By the law of total variance, $\alpha_n,\beta_n\in[0,1]$ and
$\alpha_n^2+\beta_n^2=1$.  With this notation,
\eqref{eq:main_decomp} becomes 
\begin{equation*}
\frac{\W_{\G_n}-\E\W_{\G_n}}{\sqrt{\Var(\W_{\G_n})}}
= \alpha_n\sqrt{Q_n^{\mathrm s}}\,X(\G_n) +
\beta_nY_n^{\mathrm s}.
\end{equation*}
To show the CLT for $\W_{\G_n}$, it suffices to verify the following three conditions.

\begin{enumerate}[
    label=\textbf{Condition \Roman*.},
    ref=Condition~\Roman*,
    align=left,
    widest=III,
    leftmargin=*,
    labelsep=0.75em,
    itemsep=1em]
\item \label{cond:quenched_clt}
As $n\to\infty,$
\[ \E_{\G_n}
d_{\mathrm{KS}}\left(\mathcal L\bigl(X(\G_n)\mid \G_n\bigr),\mathrm N(0,1)\right)=o(1).\]
\item  \label{cond:variance_conc} $Q_n^{\mathrm s} \stackrel{p}{\to} 1$ as $n \to \infty.$
 \item \label{cond:quenched_mean_clt} For every subsequence $(n_k)$ such that $\beta_{n_k}\to \beta>0$, one has  
 \begin{equation*}
   Y_{n_k}^{\mathrm s} \stackrel{d}{\to} \mathrm N(0,1)  \text{ as } k \to \infty.
 \end{equation*}
\end{enumerate}
It is easy to check that \ref{cond:quenched_clt}, \ref{cond:variance_conc}, and \ref{cond:quenched_mean_clt} imply the desired CLT for $\W_{\G_n},$ see Section~\ref{subsec:outline}.

\ref{cond:quenched_clt} concerns the fluctuations of $\W_{\G_n}$
around its quenched mean, with the graph held fixed.  For deterministic bounded-degree graphs with linearly many edges and diverging girth, the correlation decay
established in \cite{lamsen2026} for exponential edge weights, combined with
Chatterjee's normal approximation method \cite{Chatterjee14}, yields a uniform
quenched CLT; see Section~\ref{sec:chatterjee_CLT}. Removing a negligible number of edges
to eliminate short cycles then gives Condition~I for $\G_n$.
This reduction is carried out in Section~\ref{sec:verification}.

\ref{cond:variance_conc} states that the quenched variance $\Var_w(\W_{\G_n})$ concentrates around its mean. We first establish the corresponding concentration for
the configuration model, using correlation decay together with an
Efron--Stein-type variance bound for graph statistics
(Lemma~\ref{cor: conf_var_bound}), and then transfer it to the simple-graph model.
The details are given in Section~4.4.

The main technical difficulty is verifying
\ref{cond:quenched_mean_clt}, which concerns the fluctuations of the
quenched mean $\E_w\W_{\G_n}$ under the randomness of the graph.
Using Janson's switching technique \cite{Janson}, we first reduce the CLT for $\E_w\W_{\G_n}$ to the corresponding CLT for $\E_w\W_{\C_n}$ on the configuration model $\C_n$, whose distribution is more tractable.  The transfer  is stated in
Lemma~\ref{lem:G_to_C-CLT} and proved in
Section~\ref{subsec:G_to_C-CLT}.

After averaging over the edge weights, the quenched mean becomes a
statistic of the graph alone and can be written as a sum of vertex
contributions:
\begin{equation}\label{eq:graph_statistics}
  \E_w\W_{\C_n} = \sum_{v\in V(\C_n)} h(\C_n,v),
\quad h(G,v):=\frac12\sum_{e\ni v}\E_w\!\left[w_e\ind_{\{e\in\M_G\}}\right].  
\end{equation}
Our main input is the CLT of Barbour--R\"ollin
\cite{BarbourRollin} for sums of {\em local} graph statistics, stated in
Theorem~\ref{thm: BarbourRollin}. Since $h(\C_n,v)$ is not local, we
approximate it by replacing the full graph $\C_n$ with the radius-$R$
neighborhood $\bB_v^R(\C_n)$ of $v$, where
$1\ll R\ll\log n$ grows sufficiently slowly with $n$.

A naive term-by-term application of correlation decay gives only the bound
\[ \big| \E_w\W_{\C_n}  - \sum_{v\in V(\C_n)}  h(\bB_v^R(\C_n),v) \big|
= O_p\bigl(ne^{-cR}\bigr) \]
for some $c>0$. When $R=o(\log n)$, this bound is too large to yield an approximation
error of order $o_p(\sqrt n)$, the relevant fluctuation scale of
$\E_w\W_{\C_n}$.

Instead, we
prove the perturbative correlation decay estimate in
Theorem~\ref{thm:local_perturbative_bound}, which shows that the quenched
mean and its local approximation respond nearly identically to the deletion
of a single edge, uniformly over the bounded graphs. Combining this estimate with the
Efron--Stein-type variance bound in
Lemma~\ref{cor: conf_var_bound}, we control the approximation error
in variance. Schematically, the resulting estimate is
\[ \Var_{\C_n}\Big( \E_w\W_{\C_n} - \sum_{v\in V(\C_n)}  h(\bB_v^R(\C_n),v) \Big)=o(n),\]
see Proposition~\ref{eq: var_k_approx}. Consequently, after centering,
the local statistic approximates the quenched mean at the required
$\sqrt n$ scale. The Barbour--R\"ollin CLT can then be applied to the
local statistic, yielding the desired CLT for $\E_w\W_{\C_n}$. The
details are given in Section~\ref{sec: final_step}. Strictly speaking, the proof implements this argument on a suitably pruned version of
$\C_n$, see Definition~\ref{df:bad_edge}, for which the required neighborhoods are trees (and in particular, contain no self-loops and multi-edges). The pruning error is negligible on the $\sqrt n$ scale, and the edge-centered
statistic can be expressed as a sum of local vertex contributions.

The proof of Theorem~\ref{thm:local_perturbative_bound} is considerably
more involved than the argument for standard correlation decay and is
given in Section~\ref{sec: local_approximation}.

\subsection{Notations}

Let $G=(V(G),E(G))$ be a finite simple graph. For $A\subseteq V(G)$,
let $G\setminus A$ denote the induced subgraph of $G$ on the vertex set
$V(G)\setminus A$. For $F\subseteq E(G)$, let $G\setminus F:=(V(G),E(G)\setminus F).$
For a singleton, we write $G\setminus v$ and $G\setminus e$ in place of
$G\setminus\{v\}$ and $G\setminus\{e\}$, respectively.

For $u,v\in V(G)$, let $d_G(u,v)$ denote their graph distance. For $v\in V(G)$ and $e,f\in E(G)$, define
\[d_G(v,e):=\min_{x\in e}d_G(v,x), \qquad
d_G(e,f):=d_{L(G)}(e,f), \]
where $L(G)$ denotes the line graph of $G$.  When the
ambient graph is clear, we write $d(e,f)$ instead of $d_G(e,f)$.  Let $\mathcal{G}_{n, D}$ be the space of all simple graphs on $n$ vertices with maximum degree at most~$D$.

For $v\in V(G)$, $e\in E(G)$, and $r\ge0$, let $\bB_v^r(G)$ and $\bB_e^r(G)$ be the subgraphs of $G$ induced by the vertices at distance at most $r$ from $v$ and $e$, respectively, and let $\partial\bB_v^r(G)$ and $\partial\bB_e^r(G)$ be the corresponding sets of vertices at distance exactly $r$.

We write $\prob_w$ for probability with respect to the weights $(w_e)$
only, and $\prob_{\G_n}$ and $\prob_{\C_n}$ for probability with respect
to the random graph $\G_n$ and the configuration model $\C_n$,
respectively, where $\C_n$ is defined in
Section~\ref{subsec:config}. We use $\prob$ for probability with respect
to both the graph and the weights. Similarly, $\E_w$, $\E_{\G_n}$, and
$\E_{\C_n}$ denote the corresponding expectations, while $\E$ denotes
expectation with respect to all sources of randomness. We use the same subscript conventions for variance $\Var$ and covariance $\Cov$. For a random variable $X$, we denote its law by $\mathcal L(X)$. Let $d_{\mathrm{KS}}(\mu, \nu)$ and  $d_{\mathrm{W}}(\mu, \nu)$   be the Kolmogorov-Smirnov distance and the Wasserstein distance between the probability measures $\mu$ and $\nu$ on $\mathbb{R}$, respectively. 

Throughout the paper, the expression $f(n, \ell) = O(g(n, \ell))$ or $f(n, \ell) \leq O(g(n, \ell))$ means that there exists a constant $C > 0$ independent of $n$ and $\ell$ such that $f(n, \ell) \leq C g(n, \ell)$ for all $n$, $\ell$. If we write $O_\varepsilon$, then the constant $C$ could possibly depend on $\varepsilon$. $f(n) = o(g(n))$ means $\lim_{n \to \infty} \frac{f(n)}{g(n)} = 0$. Finally, $C$, $c$, $C_1$, $C_2$ etc.\ denote positive constants that may be different at each occurrence; they possibly depend on $D$ or some other parameters but not on $n$.

\section{Correlation decay for maximum weight matching on locally tree-like graphs}

\begin{df}[Bonus]
Let $G$ be a finite simple graph with edge weights $(w_e)_{e\in E(G)}$, and let $H$ be a subgraph of $G$. 
	 For $v\in V(H)$ and $e\in E(H)$, define
\[\B(v,H):=\W_H-\W_{H\setminus v}, \qquad \B(e,H):=\W_H-\W_{H\setminus e}. \]
These are called the bonus at vertex $v$ in $H$ and the bonus at edge $e$ in $H$, respectively.
\end{df}
The vertex bonuses satisfy the  cavity recursion (see \cite[Eq.~(2.2)]{lamsen2026}):
\begin{equation}\label{eq:bonus_cavity_recursion}
     \B(v, H) = \max_{u: (uv)\in E(H) }\big( w_{(uv)} -  \B(u, H \setminus v) \big)_+ .
\end{equation}
The edge bonus can be expressed in terms of vertex bonuses. Let $e=(uv)\in E(H)$.
Since every matching of $H$ either avoids $e$, or uses $e$ and then avoids both
$u$ and $v$, we have
\[\W_H =  \max\left (  \W_{H\setminus e},  w_{(uv)}+\W_{H\setminus\{u,v\}} \right).
\]
Therefore
\begin{align}\label{eq:edge_bonus_vertex_bonus}
\B(e,H) &= \left( w_{(uv)} +\W_{H\setminus\{u,v\}} -\W_{H\setminus e}\right)_+ \nonumber\\ &=\left(w_{(uv)}-\bigl(\W_{H\setminus u}-\W_{H\setminus\{u,v\}}\bigr)-
\bigl(\W_{H\setminus e}-\W_{H\setminus u}\bigr) \right)_+ \nonumber\\
&=\left( w_{(uv)} - \B(v,H\setminus u) -  \B(u,H\setminus e)\right)_+.
\end{align}
Conversely, vertex bonuses can be written in terms of edge bonuses. Let
$v_1,\ldots,v_d$ be the neighbors of $v$, and write $e_i=(vv_i)$. For each $i$,
let $G_i$ be the subgraph obtained from $G$ by deleting all edges incident to
$v$ except $e_i$. Then
\begin{equation}
\label{eq:vertex_to_edge_bonus}
    \B(v,G)=\max_{1\le i\le d} \B(e_i,G_i).
\end{equation}
Indeed, $  G_i\setminus e_i$ is the graph $G\setminus v$ together with an isolated copy of $v$, and hence $\W_{G_i\setminus e_i}=\W_{G\setminus v}.$
Also, $ \W_{G_i} = \max\left (  \W_{G\setminus v}, w_{e_i}+\W_{G\setminus\{v,v_i\}}\right).$
Thus
\[\B(e_i,G_i) =\left( w_{e_i}+\W_{G\setminus\{v,v_i\}}-\W_{G\setminus v} \right)_+ = \big (  w_{e_i} - \B(v_i, G \setminus v) \big)_+. \]
Taking the maximum over $i$ gives \eqref{eq:vertex_to_edge_bonus}, by \eqref{eq:bonus_cavity_recursion}.

The key ingredient in our proof is spatial correlation decay for MWM on bounded-degree graphs. In the locally tree-like setting with exponential edge weights, this was proved in \cite{lamsen2026}. The precise statement is as follows.
\begin{thm}[Correlation decay \cite{lamsen2026}, Theorem~1.3]
	\label{thm:vertex_bonus}
    Let $G$ be a finite connected graph whose degree is bounded above by $D$. Suppose that the edge weights are i.i.d.\ exponentially distributed with parameter $1$. Let $e\in E(G)$ and $r\geq 3$ such that $\bB_e^r(G)$ is a tree. Then there exist constants $c, C > 0$ depending only on $D$ such that
\[\E_w \sup_{A\subseteq \partial \bB_e^r(G)} |\ind_{\{e\in \M_{\bB_e^r(G)}\}} - \ind_{\{e\in \M_{\bB_e^r(G)\setminus A}\}}| \leq Ce^{-cr}. \]
    \end{thm}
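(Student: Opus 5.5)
The plan is to reduce edge membership in the MWM to a comparison of vertex bonuses, and then control the effect of deleting boundary vertices through the monotone, alternating structure of the cavity recursion \eqref{eq:bonus_cavity_recursion} on the tree $T:=\bB_e^r(G)$. Write $e=(uv)$. Membership is decided by the edge bonus: $e\in\M_T$ iff $\B(e,T)>0$. By \eqref{eq:edge_bonus_vertex_bonus}, this holds iff $w_e>\B(v,T\setminus u)+\B(u,T\setminus e)$. Because $T$ is a tree, $T\setminus e$ splits into two subtrees $T_u\ni u$ and $T_v\ni v$. Then $\B(u,T\setminus e)=\B(u,T_u)$ and $\B(v,T\setminus u)=\B(v,T_v)$. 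So the indicator equals $\ind\{w_e>\B(u,T_u)+\B(v,T_v)\}$, and the same formula holds with $T$ replaced by $T\setminus A$ for each $A\subseteq\partial T$.

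\textbf{Step 1 (monotonicity).} We use the map $x\mapsto (w-x)_+$, which is nonincreasing. It follows that deleting the boundary set $A$ changes the root bonus monotonically, with direction given by the parity of the depth. Along the recursion, the extreme configurations over all $A$ are therefore obtained by deleting either all boundary vertices or none; equivalently, one sets the depth-$r$ bonuses to $0$ or leaves them unchanged, and both are bounded by the appropriate leaf values. As a result, $\sup_A|\B(u,T_u\setminus A)-\B(u,T_u)|$ is at most the gap $\Delta_u$ between two recursions started from extreme boundary conditions (boundary values $0$ versus $\B$ of the boundary vertices). The same holds for $v$.

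\textbf{Step 2 (contraction).} We show that $\E_w[\Delta_u\wedge 1]\le Ce^{-cr}$ uniformly over trees of degree at most $D$. The natural route uses the memoryless property. Fix a parent $x$ with children $y_i$. Given the children's bonuses, the event that the maximizing child changes, or that the positive part switches, requires an exponential weight to fall in an interval whose length equals the perturbation. Conditioning on everything below, this has probability at most that length. A single step does not obviously contract, since the max of $D$ differences can pass the full gap. So I would iterate two levels and show that the gap at $x$ is bounded by the gap at one grandchild, weighted by the probability that a specific chain of alternating maximizers is active, and that this expected weight is at most $\rho<1$ uniformly over the local structure. Iterating over $r/2$ levels then gives exponential decay. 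This step is the main obstacle: one must obtain a contraction factor $\rho<1$ uniformly over \emph{all} bounded-degree tree shapes, not only regular ones. I would first look for an explicit potential, such as $\E[\Delta\,\ind\{\text{path active}\}]$, in which the exponential density provides a factor like $e^{-\B}$ that makes the product strictly below $1$ after two steps.

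\textbf{Step 3 (conclusion).} We have $|\ind\{w_e>a\}-\ind\{w_e>a'\}|\le\ind\{w_e\in[\min(a,a'),\max(a,a')]\}$. The weight $w_e$ is independent of the subtree bonuses, since $T_u$ and $T_v$ do not contain $e$, and its density is at most $1$. Hence the expected supremum is at most $\E[\Delta_u+\Delta_v]\wedge 1$, and by Step 2 this is at most $Ce^{-cr}$. The condition $r\ge3$ only serves to ensure the subtrees have at least a couple of levels, so that the two-step iteration can start.
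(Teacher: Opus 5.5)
Your Steps~1 and~3 give the right reduction. Step~2, however, is the entire content of the theorem, and it is not proved: you state the target $\E_w[\Delta_u\wedge 1]\le Ce^{-cr}$ and then describe what you would look for. (The paper does not prove this statement either; it imports it from \cite{lamsen2026}.) The pathwise scheme you outline does not close as written, for three reasons:
\begin{itemize}
\item A single step of $\B(x)=\max_y(w_{xy}-\B(y))_+$ is only non-expansive.
\item The disagreement at $x$ can be carried by any of up to $D-1$ children. The event that a given child, or a given grandchild chain, is the maximizer is correlated with that child's own gap.
\item The order between the two boundary conditions flips from level to level, so consecutive maximizers are selected in different configurations. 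The memoryless conditioning then produces the weights $e^{-\B^{0}}$ and $e^{-\B^{\infty}}$ alternately, and these do not telescope.
\end{itemize}
Nothing in the sketch yields a factor $\rho<1$ uniform over tree shapes. You correctly name this as the obstacle, but you do not resolve it.

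The missing idea is to leave the pathwise picture right after Step~1.
\begin{itemize}
\item Your monotonicity shows that $s(A):=\B(u,T_u\setminus A)+\B(v,T_v\setminus A)$ lies between $s^0:=s(\emptyset)$ and $s^\infty:=s(\partial\bB_e^r(G))$, and both are independent of $w_e$.
\item Hence the expected supremum equals $|\E e^{-s^0}-\E e^{-s^\infty}|$; the pathwise ordering is exactly what lets the absolute value come outside the expectation. This is at most $|p^0_u-p^\infty_u|+|p^0_v-p^\infty_v|$, where $p_x:=\E e^{-\B(x,T_x)}$ and the superscript records the boundary condition.
\item On a tree, memorylessness makes $p_x$ satisfy the closed deterministic recursion \eqref{eq:p_recursion_q}. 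A present boundary vertex contributes $p=1$ and a deleted one contributes $p=0$.
\item This also corrects a slip in your Step~1. The vertices of $\partial\bB_e^r(G)$ are leaves of $T$, so their bonus is already $0$. Deleting one kills its parent's term $(w-\B)_+$, i.e.\ it imposes the boundary value $+\infty$. The extremes are therefore $0$ versus $+\infty$, not $0$ versus bonuses inherited from outside the ball; that latter comparison is the one behind \eqref{eq:corr_decay_ind}.
\item In the coordinates $q=-\log p$, the partial derivatives of the recursion are the $q$-Jacobian coefficients. Lemma~\ref{lem:q_jacobian_contraction} gives $\sum_y J_{x,y}\le 1-(2D)^{-D}$ over the children whose values differ, provided these children are non-leaves in both configurations, so that $p_y\in[1/D,1-1/(2D)]$ along the whole segment.
\item That is a one-step contraction, uniform in the tree shape, with no two-level iteration needed. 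It fails only within two levels of the boundary, where one simply uses $q\in[0,\log D]$.
\item The mean value theorem then gives $|q^0_u-q^\infty_u|\le(\log D)\,(1-(2D)^{-D})^{r-2}$, and likewise for $v$. Since $|e^{-a}-e^{-b}|\le|a-b|$, this bounds the $p$-differences and finishes the proof.
\end{itemize}
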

Let $\mathsf{A}$ be the set of
boundary vertices of $\bB_e^r(G)$ that are matched by $\M_G$ to edges outside
$\bB_e^r(G)$. The restriction of $\M_G$ to $\bB_e^r(G)$ is a
maximum weight matching of $\bB_e^r(G) \setminus \mathsf{A}$. Therefore, $ \ind_{\{e\in \M_G\}} = \ind_{\{e\in \M_{\bB_e^r(G) \setminus \mathsf{A}}\}}$
almost surely. Consequently, by Theorem~\ref{thm:vertex_bonus}, for all $r \ge 3$,
\begin{align} \label{eq:corr_decay_ind}
    \prob_w\big( \ind_{\{e\in \M_G\}}\ne \ind_{\{e\in \M_{\bB_e^r(G)}\}}\big)
    &\le \E_w\sup_{A\subseteq \partial \bB_e^r(G)} \big| \ind_{\{e\in \M_{\bB_e^r(G)}\}} - \ind_{\{e\in \M_{\bB_e^r(G)\setminus A}\}} \big|  \le Ce^{-cr}.
\end{align}
The above correlation decay result readily yields the locality of the edge and vertex bonuses. 
\begin{lem}
	\label{lem: edge_bonus}
    Let $G$ be a finite simple graph of maximum degree at most $D$ with i.i.d.\ $\mathrm{Exp}(1)$ edge weights.  Then there exist constants  $c, C>0$, depending only on  $D$, such that the following estimates hold.
    \begin{itemize}
        \item[(i)]  Let  $e \in E(G)$ and  $r\geq 1$ be such that $\bB_e^r(G)$ is a tree.
        \[\E_w |\B(e, G) - \B(e, \bB_e^r(G))|^2 \leq Ce^{-cr}. \]
    \item[(ii)]   Let  $v \in V(G)$ and  $r\geq 1$ be such that $\bB_v^{r+1}(G)$ is a tree.
    \[ \E_w |\B(v, G) - \B(v, \bB_v^{r+1}(G))|^2 \leq Ce^{-cr}.\]
    \end{itemize}
\end{lem}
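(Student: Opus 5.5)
Both bonuses will be controlled through the recursions \eqref{eq:edge_bonus_vertex_bonus} and \eqref{eq:bonus_cavity_recursion}, together with the almost-sure domination $0\le \B\le$ (a maximum of nearby weights). Throughout, $H$ denotes either $G$ or the relevant ball.

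\textbf{Part (i).} Write $H_r:=\bB_e^r(G)$. First, the edge bonus is bounded by the edge weight: from \eqref{eq:edge_bonus_vertex_bonus}, since vertex bonuses are nonnegative,
\[
0\le \B(e,G)\le w_e \qquad\text{and}\qquad 0\le \B(e,H_r)\le w_e .
\]
Second, when the two bonuses disagree, the local MWM indicator must disagree. Let $e=(uv)$. By the identity $\W_H=\max(\W_{H\setminus e},\,w_e+\W_{H\setminus\{u,v\}})$, we have $\B(e,H)>0$ iff $e\in\M_H$ (a.s.). On the event $\{e\notin\M_G,\ e\notin \M_{H_r}\}$, both bonuses are $0$. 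On the event that $e$ lies in both matchings, I would also want equality, but this is not automatic. So instead I will apply correlation decay to the graphs obtained by modifying $w_e$.

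\textbf{Threshold representation.} By the memoryless property, condition on all weights except $w_e$. Then
\[
\B(e,H)=\bigl(w_e-T_H\bigr)_+, \qquad T_H:=\W_{H\setminus e}-\W_{H\setminus\{u,v\}}\ge 0,
\]
where $T_H$ does not depend on $w_e$. Hence
\[
|\B(e,G)-\B(e,H_r)|\le |T_G-T_{H_r}|\wedge w_e .
\]
The key step is to relate $T_G\neq T_{H_r}$ to a failure of correlation decay at a neighboring edge or vertex. A cleaner route is to note that $\ind_{\{e\in\M_H\}}=\ind_{\{w_e>T_H\}}$. If $T_G<T_{H_r}$, then every $w_e\in(T_G,T_{H_r})$ produces disagreeing indicators. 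Integrating over $w_e\sim\mathrm{Exp}(1)$ gives
\[
\prob_w(\text{disagree})=\E_w\bigl|e^{-T_G}-e^{-T_{H_r}}\bigr| .
\]
Since \eqref{eq:corr_decay_ind} bounds this by $Ce^{-cr}$, we obtain $\E_w|e^{-T_G}-e^{-T_{H_r}}|\le Ce^{-cr}$.

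\textbf{From the threshold bound to the second moment.} Using
\[
(x-t)_+-(x-s)_+\le |t-s|\wedge x,
\]
I want to bound $\E_w[(|T_G-T_{H_r}|\wedge w_e)^2]$. Split on whether $\max(T_G,T_{H_r})\le M$. On that event, $|T_G-T_{H_r}|\le e^{M}|e^{-T_G}-e^{-T_{H_r}}|$. On its complement, we need $w_e>M$ for the difference to be nonzero (more precisely $w_e>\min(T_G,T_{H_r})$), and I would bound the contribution by $\E[w_e^2\ind_{\{w_e>M\}}]\le C M^2e^{-M}$. The first part needs a square, so use
\[
(|T_G-T_{H_r}|\wedge w_e)^2\le w_e\,(|T_G-T_{H_r}|\wedge w_e),
\]
combined with the independence of $w_e$ from the $T$'s to handle the factor $w_e$. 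Choosing $M=\epsilon r$ then gives $Ce^{-c'r}$.

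\textbf{Main obstacle.} The hardest point is precisely this passage from the probability bound to the quadratic bound when the thresholds $T$ may be large. For this I will also use that $T_H\le \max$ of the weights adjacent to $u,v$, which has exponential tails with constants depending on $D$.

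\textbf{Part (ii).} Use \eqref{eq:vertex_to_edge_bonus}: $\B(v,G)=\max_i\B(e_i,G_i)$, and likewise $\B(v,\bB_v^{r+1}(G))=\max_i \B(e_i,(\bB_v^{r+1})_i)$. The ball $\bB_{e_i}^{r}(G_i)$ is contained in $\bB_v^{r+1}(G)$, and it is a tree; the relevant local balls of $G_i$ and of $(\bB^{r+1}_v)_i$ around $e_i$ coincide up to radius $r$. Since the maximum is $1$-Lipschitz,
\[
|\max_i a_i-\max_i b_i|^2\le \sum_i |a_i-b_i|^2 .
\]
Applying part (i) to each pair (both graphs against the common ball) and summing over at most $D$ indices yields the bound $Ce^{-cr}$.
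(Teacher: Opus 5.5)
Your skeleton is the paper's. You use the threshold representation $\B(e,H)=(w_e-T_H)_+$ with $T_H=\B(v,H\setminus u)+\B(u,H\setminus e)$ independent of $w_e$, and you reduce to the indicator-disagreement probability, which is controlled by \eqref{eq:corr_decay_ind}. That estimate needs $r\ge 3$; the cases $r=1,2$ are absorbed into $C$ since $0\le \B\le w_e$. For (ii) you use \eqref{eq:vertex_to_edge_bonus} and the Lipschitz property of the maximum. The paper notes that the component of $e_i$ in the truncated graph is exactly $\bB_{e_i}^r(G_i)$, so (i) is applied only once, but your two-sided application also works.

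The real difference is the passage from the disagreement probability to the $L^2$ bound. The paper does this exactly. For $X\sim\mathrm{Exp}(1)$ and $0\le a\le b$, memorylessness gives
\[
\E\bigl|(X-a)_+-(X-b)_+\bigr|^2 = e^{-a}\,\E\bigl[\min(X,b-a)^2\bigr]\le 2\bigl(e^{-a}-e^{-b}\bigr)=2\,\prob(a<X\le b).
\]
Conditioning on all weights except $w_e$ then gives $\E_w|\B(e,G)-\B(e,\bB_e^r(G))|^2\le 2\,\prob_w(\text{disagree})$. This costs nothing in the rate and needs no tail bounds on $T_H$. Your truncation at $M=\epsilon r$ also works, with a worse but still positive exponent, and it is a legitimate alternative.

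As written, however, the second half of your truncation is wrong. On $\{\max(T_G,T_{H_r})>M\}$ the smaller threshold can be close to $0$. So a nonzero difference does not force $w_e>M$, and $\E[w_e^2\ind_{\{w_e>M\}}]$ does not bound that contribution.

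The fix is the one you point to under ``Main obstacle.'' Bound that contribution by
\[
\E[w_e^2\ind_{\{\max(T_G,T_{H_r})>M\}}]=2\,\prob_w(\max(T_G,T_{H_r})>M),
\]
using independence of $w_e$ from the thresholds. Then use $T_H\le \max_{g\ni u,\,g\ne e}w_g+\max_{g\ni v,\,g\ne e}w_g$. This is a sum of two maxima, not a single maximum. It is the same bound for $H=G$ and $H=\bB_e^r(G)$, since $r\ge 1$. It gives $\prob_w(\max(T_G,T_{H_r})>M)\le 2(D-1)e^{-M/2}$. Combined with your bound on $\{\max\le M\}$, taking $\epsilon<c$ yields $Ce^{-c'r}$.
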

\begin{proof}
We first prove (i). Let $e=(uv)$. The cases
$r=1,2$ can be absorbed into the constant $C$, since
$0\le \B(e,H)\le w_e$ for every graph $H$ containing $e$. Hence assume
$r\ge 3$.

It follows from \eqref{eq:edge_bonus_vertex_bonus} that for any subgraph $H$
containing $e$, almost surely, 
\begin{equation} \label{eq:edge_MWM_rep}
    \{e\in \M_H\} =\{w_e > \B(v,H\setminus u)+\B(u,H\setminus e)\}. 
\end{equation}
We use the following elementary estimate. If $X\sim \mathrm{Exp}(1)$ and
$a,b\ge0$, then
\[ \E |(X-a)_+-(X-b)_+|^2
    \le 2\prob(\ind_{\{X>a\}}\ne \ind_{\{X>b\}}).\]
Applying this conditionally  with $H=G$ and $H=\bB_e^r(G)$ and using \eqref{eq:edge_MWM_rep}, we obtain
\[\E_w |\B(e,G)-\B(e,\bB_e^r(G))|^2 \le
2\prob_w\big( \ind_{\{e\in \M_G\}} \ne \ind_{\{e\in \M_{\bB_e^r(G)}\}} \big).\]
This proves (i) by applying \eqref{eq:corr_decay_ind}.

We now prove (ii). Let $v_1,\ldots,v_d$ be the neighbors of $v$, and write
$e_i=(vv_i)$. For each $i$, let $G_i$ be the graph obtained from $G$ by
deleting all edges incident to $v$ except $e_i$. Similarly, let $H_i$ be the
graph obtained from $\bB_v^{r+1}(G)$ by deleting all edges incident to $v$
except $e_i$. By \eqref{eq:vertex_to_edge_bonus},
\[   \B(v,G)=\max_{1\le i\le d} \B(e_i,G_i), \qquad   \B(v,\bB_v^{r+1}(G)) =
    \max_{1\le i\le d} \B(e_i,H_i).\]
Since the edge bonus of $e_i$ depends only on the connected component
containing $e_i$, and since $\bB_v^{r+1}(G)$ is a tree, this component in
$H_i$ is exactly $\bB_{e_i}^r(G_i)$. Thus $\B(e_i,H_i)=\B(e_i,\bB_{e_i}^r(G_i)).$
Moreover, $\bB_{e_i}^r(G_i)$ is a tree. Hence, by part (i), $ \E_w
    \left|
        \B(e_i,G_i)-\B(e_i,\bB_{e_i}^r(G_i))
    \right|^2
    \le Ce^{-cr}$ for each $i$.
Consequently, 
\[
\begin{aligned}
    \E_w  \left| \B(v,G)-\B(v,\bB_v^{r+1}(G))  \right|^2
    &\le \sum_{i=1}^d   \E_w  \left|   \B(e_i,G_i)-\B(e_i,\bB_{e_i}^r(G_i)) \right|^2 
    \le DCe^{-cr}.
\end{aligned}
\]
Renaming the constant $C$ proves (ii).
\end{proof}

\subsection{Local approximation and perturbative correlation decay}\label{sec: local_approx}

Theorem~\ref{thm:vertex_bonus} gives a correlation decay estimate for individual edge indicators, but this is not sufficient for the proof of Theorem~\ref{thm: annealed_CLT}. We will need a perturbative version of correlation decay, which compares the effect of deleting a single edge on the MWM weight with its effect on a local approximation.

Let $H$ be a subgraph of $G$. For $R \ge 1,$ define the local approximation of $\W_H$ by 
\begin{equation}\label{eq:MWM_local_H}
\W_H^{\mathrm{loc}, R} = \sum_{ e \in E(H) } w_e \ind_{\{e\in \M_{\bB_e^R(H)}\}}.    \end{equation}

\begin{thm}\label{thm:local_perturbative_bound} 
There exist an integer $A_0 \ge 2$ and constants $C >0$, and $c>0$, depending only on $D$, such that the following holds. Let $R\ge 1$ and $A\ge A_0$, and let $G$ be a finite simple graph with maximum degree at most $D$ such that $\bB_e^{AR}(G)$ is a tree for every edge $e\in E(G)$. Then
for any $f \in E(G),$
\begin{equation} \label{eq:local_perturbative_bound}
 \big| \big( \E_w \W_G  -  \E_w \W_{G \setminus f} \big) - \big(\E_w \W^{\mathrm{loc},R}_G  -  \E_w \W^{\mathrm{loc}, R}_{G \setminus f} \big)  \big|   \le C e^{-cR}.
\end{equation}
\end{thm}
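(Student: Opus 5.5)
Write $f=(xy)$. The left side of \eqref{eq:local_perturbative_bound} compares two ``difference'' quantities. First, $\E_w\W_G-\E_w\W_{G\setminus f}=\E_w\B(f,G)$ is an edge bonus. Second, since $\bB_e^R(G\setminus f)=\bB_e^R(G)\setminus f$ whenever $f\in\bB_e^R(G)$, and coincides with $\bB_e^R(G)$ otherwise, the local difference is
\[
\E_w\W^{\mathrm{loc},R}_G-\E_w\W^{\mathrm{loc},R}_{G\setminus f}
=\E_w\bigl[w_f\ind_{\{f\in\M_{\bB_f^R(G)}\}}\bigr]
+\sum_{e\neq f,\ d(e,f)\le R}\E_w\bigl[w_e\bigl(\ind_{\{e\in\M_{\bB_e^R(G)}\}}-\ind_{\{e\in\M_{\bB_e^R(G)\setminus f}\}}\bigr)\bigr].
\]
So only the $O(D^{R})$ edges within distance $R$ of $f$ contribute. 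Note that a crude term-by-term bound $Ce^{-cR}$ per edge fails because there are exponentially many terms. The plan is to rewrite the global difference in the same form: for the global quantity, $\B(f,G)=\W_G-\W_{G\setminus f}=\sum_e w_e(\ind_{\{e\in\M_G\}}-\ind_{\{e\in\M_{G\setminus f}\}})$. I would then show that both sums are dominated by edges near $f$ and match term by term, using the tree structure up to radius $AR$.

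\textbf{Step 1 (global side truncation).} Deleting $f$ changes $\M_G$ only along a single alternating path/cycle; on a tree neighbourhood this is an alternating path $P$ starting at $x$ or $y$. I would show $\E_w[\sum_{e\in P,\,d(e,f)>R}w_e]\le Ce^{-cR}$. The tool is to show that the probability that the alternating path reaches distance $k$ decays like $e^{-ck}$. This follows from Theorem~\ref{thm:vertex_bonus} applied to the edge of $P$ at distance $k$: whether that edge lies in the matching is insensitive to changes at distance $k$ away, including the deletion of $f$. Combined with Exp tails via Cauchy--Schwarz, this gives the truncation. Thus $\E_w\B(f,G)$ equals, up to $Ce^{-cR}$, the sum over $d(e,f)\le R$ of $\E_w[w_e(\ind_{\{e\in\M_G\}}-\ind_{\{e\in\M_{G\setminus f}\}})]$.

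\textbf{Step 2 (matching the terms).} For each $e$ with $d(e,f)\le R$, I compare $\ind_{\{e\in\M_G\}}-\ind_{\{e\in\M_{G\setminus f}\}}$ with $\ind_{\{e\in\M_{\bB_e^R(G)}\}}-\ind_{\{e\in\M_{\bB_e^R(G)\setminus f}\}}$. Each difference is nonzero only if $e$ lies on the alternating path generated by deleting $f$, in the respective graph. I would bound the discrepancy by the event that $e$ is on that path \emph{and} the decision at $e$ depends on the boundary of $\bB_e^R$. Write $k=d(e,f)$. The first event requires an alternating path of length about $k$, with probability at most $e^{-ck}$. The second, roughly conditionally, costs $e^{-c(R-k)}$, via the monotone/sup formulation of Theorem~\ref{thm:vertex_bonus} over boundary subsets $A\subseteq\partial\bB_e^r$. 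This sup formulation lets us handle both $G$ and $G\setminus f$ simultaneously, since the truncation of $G$ outside the ball is equivalent to deleting some boundary set. Summing over the $D^k$ edges at distance $k$ requires the product bound to beat $D^k$. This is the \textbf{main obstacle}. Naive decay rates $c$ need not exceed $\log D$, so I would instead sum along the path itself rather than over all edges. For each $k$ there is at most one path edge at distance $k$ from each endpoint. The expected discrepancy then becomes $\sum_{k\le R}\prob(\text{path reaches }k\text{ and boundary influence at its }k\text{th edge})$, which I would bound by $\sum_k e^{-c\max(k,R-k)}\le CRe^{-cR/2}$. The hypothesis that $\bB^{AR}$ is a tree with $A\ge A_0$ guarantees that all balls involved are trees, including balls around path edges whose radius-$R$ neighbourhoods extend to distance $2R$ from $f$. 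It also lets me apply Theorem~\ref{thm:vertex_bonus} at the larger radii needed for the decoupling.

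\textbf{Step 3.} The term $e=f$ is handled directly. Combining Lemma~\ref{lem: edge_bonus}(i) with $\B(f,\cdot)$ and the identity $\E_w[w_f\ind_{\{f\in\M_H\}}]$ versus the $\B(f,H)$ contributions, it is absorbed into the path sums above. Finally, replacing $e^{-cR/2}R$ by $e^{-c'R}$ gives \eqref{eq:local_perturbative_bound}.
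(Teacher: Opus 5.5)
\textbf{Gap in Step~2: the local side has no single alternating path.} Summing along the path works only for the global alternating path $\I_f(G)$.

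The local difference $X_e^R(G)-X_e^R(G\setminus f)$ is nonzero exactly when $e\in\I_f(\bB_e^R(G))$. This is an alternating path computed in a truncated tree that changes with $e$. For distinct edges at the same distance $k$ these are paths in different graphs, so many of the $\asymp D^k$ edges at distance $k$ can satisfy the event simultaneously. The discrepancy is also nonzero on $\{e\in\I_f(\bB_e^R(G))\}\setminus\{e\in\I_f(G)\}$, where $e$ is not on the global path at all.

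So you still need $\sum_{e:\,d(e,f)=k}\prob\bigl(e\in\I_f(\bB_e^R(G))\bigr)\le C\eta^k$ with $\eta<1$. This is exactly the $D^k$ obstacle you identified. The per-edge estimate of Theorem~\ref{thm:vertex_bonus} cannot give it, since its rate need not exceed $\log D$.

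This local susceptibility bound (Lemma~\ref{lem:local_alt_path}, feeding Proposition~\ref{prop:annular_delete_edge}) is the core of the paper's argument. It needs three new ingredients:
\begin{itemize}
\item the memoryless product bound of Lemma~\ref{lem:product_upper_bound_tree};
\item the one-step contraction $\sum_y J_{x,y}\le 1-(2D)^{-D}$ of the $q$-Jacobian coefficients over non-leaf children (Lemma~\ref{lem:q_jacobian_contraction});
\item stability of these coefficients under truncation (Lemmas~\ref{lem:q_jacobian_stability} and~\ref{lem:annular_sum}), which is what lets each $e$ use its own ball.
\end{itemize}
A per-edge bound with union loss $D^{\delta R}$ works only for $d(e,f)\le\delta R$ with $\delta$ small; that is the paper's Lemma~\ref{lemma:near_delete_edge_tail}.

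\textbf{Gap in Step~1: the reach tail and the non-tree geometry.} The same obstacle undermines Step~1. Applying Theorem~\ref{thm:vertex_bonus} to ``the edge of $P$ at distance $k$'' requires a union bound over the $\asymp D^k$ possible locations of that random edge. The exponential tail for the reach of $\I_f$ is instead proved in the paper by the contraction argument (Lemma~\ref{lem:tree_boundary_tail}). It is transferred to locally tree-like $G$ in Lemma~\ref{prop:boundary_tail}, and only up to radius of order $AR$.

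Moreover, $G$ is only locally a tree. $\I_f(G)$ may leave $\bB_f^{AR}(G)$, run arbitrarily far, and re-enter near $f$. So ``at most one path edge at each distance'' fails. The unsigned far weight $\sum_{e\in P,\,d(e,f)>R}w_e$ also cannot be controlled by reach estimates valid only up to radius of order $AR$.

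The paper never bounds that unsigned weight. It uses $0\le \W_G-\W_{G\setminus f}\le w_f$ to bound the signed far sum by $w_f$ plus near path weights. It then pays a crude factor $D^{\delta R}$ only on the event that $\I_f(G)$ reaches distance $S\asymp AR/K$. This is where the largeness of $A$ is actually used.
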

The proof of Theorem~\ref{thm:local_perturbative_bound} relies on the following lemma and two propositions. For notational brevity, for any subgraph $H$ of $G$, define 
\[ X_e(H) = \ind_{ \{e\in \M_{H}\}}, \qquad X_e^R(H) = \ind_{ \{e\in \M_{\bB_e^R(H)}\}}, \]
with the convention that $X_e(H) =  X_e^R(H) = 0 $ if $e \not \in E(H).$
We can write
\[ \W_H = \sum_{ e \in E(H) } w_e X_e(H), \qquad \W^{\mathrm{loc},R}_H = \sum_{ e \in E(H) } w_e X^R_e(H).  \]
\begin{lem}
\label{lemma:near_delete_edge_tail}
There exists $\delta_0=\delta_0(D)\in(0,1)$ such that the
following holds. Let $0<\delta\le \delta_0$. Then there exist constants $c, C >0$, depending only on $\delta$ and $D$, such that for every
finite simple graph $G$ of maximum degree at most $D$, every edge $f\in E(G)$,
and every $R\ge1$ for which $\bB_f^R(G)$ is a tree,
\[\Big| \sum_{e:\,d(e,f)\le \delta R} \big( \E_w w_e X_e(G) -
\E_w w_e X_e^R(G)\big) \Big| \le C e^{-cR}.\]
\end{lem}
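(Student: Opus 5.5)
The plan is to bound each summand separately and absolutely, and to beat the volume factor $D^{\delta R}$ with the exponential decay from \eqref{eq:corr_decay_ind}; this is why $\delta$ must be small. Fix $e$ with $d(e,f)\le \delta R$ and set $r:=\lfloor (1-\delta)R\rfloor-2$. Every vertex within distance $r$ of $e$ lies within distance $R$ of $f$, up to an additive constant absorbed by the $-2$. Hence $\bB_e^r(G)\subseteq \bB_f^R(G)$, and $\bB_e^r(G)$ is a tree. We do not know that $\bB_e^R(G)$ is a tree, so we cannot compare $X_e(G)$ with $X_e^R(G)$ directly. Instead we compare both with the common intermediate quantity $\ind_{\{e\in\M_{\bB_e^r(G)}\}}$.

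First, apply \eqref{eq:corr_decay_ind} to the connected component of $G$ containing $e$ at radius $r$, assuming $r\ge 3$. This gives $\prob_w\big(X_e(G)\neq \ind_{\{e\in\M_{\bB_e^r(G)}\}}\big)\le Ce^{-cr}$. Second, apply \eqref{eq:corr_decay_ind} to the graph $H:=\bB_e^R(G)$, which is connected, has maximum degree at most $D$, and contains $e$. Since $r\le R$, we have $\bB_e^r(H)=\bB_e^r(G)$, which is a tree. So Theorem~\ref{thm:vertex_bonus} applies to $H$ and gives $\prob_w\big(X_e(H)\neq \ind_{\{e\in\M_{\bB_e^r(G)}\}}\big)\le Ce^{-cr}$. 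Moreover, $X_e(H)=X_e^R(G)$ by definition. Combining the two bounds,
\[
\prob_w\big(X_e(G)\neq X_e^R(G)\big)\le 2Ce^{-cr}.
\]
By Cauchy--Schwarz and $\E_w w_e^2=2$, we get $|\E_w w_e(X_e(G)-X_e^R(G))|\le \sqrt{2}\,\sqrt{2C}\,e^{-cr/2}$. The weight $w_e$ is not independent of the event, so we use Cauchy--Schwarz rather than factorizing.

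Next we sum over $e$. The number of edges at line-graph distance at most $\delta R$ from $f$ is at most $2D^{\delta R+2}$. Hence the whole sum is bounded by
\[
C' D^{\delta R}e^{-c(1-\delta)R/2}\le C' \exp\big(-R\,[c(1-\delta)/2-\delta\log D]\big).
\]
Choose $\delta_0(D)$ so that $\delta_0\log D\le c/8$ and $\delta_0\le 1/2$. Then for all $\delta\le\delta_0$ the exponent is at most $-cR/8$, which gives the claim with the new constant $c/8$. The cases where $r<3$, i.e.\ $R$ bounded by a constant depending on $\delta$, are absorbed into $C$. For those cases we use the trivial bound $|\E_w w_e(X_e-X_e^R)|\le \E_w w_e=1$, and there are boundedly many edges.

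The only delicate point is the second application of correlation decay: it must be made inside $\bB_e^R(G)$, whose own $r$-ball is a tree even though the graph itself may not be. This requires checking that $\bB_e^r(\bB_e^R(G))=\bB_e^r(G)$ as induced subgraphs, which holds because $r\le R$. The rest is the routine balancing of the volume growth $D^{\delta R}$ against the decay $e^{-cR}$, which is exactly what forces $\delta_0$ to be small.
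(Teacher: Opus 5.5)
Your proposal is correct and follows essentially the same route as the paper. Both compare $X_e(G)$ and $X_e^R(G)$ through the intermediate indicator on the tree $\bB_e^r(G)$ with $r\approx(1-\delta)R$, using $\bB_e^r(\bB_e^R(G))=\bB_e^r(G)$. Both then apply Cauchy--Schwarz with $\E_w w_e^2=2$ and beat the $O(D^{\delta R})$ count of near edges by taking $\delta_0\log D$ small relative to the decay rate. The only differences are cosmetic: you choose $r=\lfloor(1-\delta)R\rfloor-2$ where the paper uses $\lfloor(1-2\delta)R\rfloor$, and you combine the two probability bounds before Cauchy--Schwarz rather than after.
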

\begin{prop}
\label{prop:annular_delete_edge}
Fix $\delta \in (0, 1)$. There exist constants 
$c, C>0$, depending only on $\delta$ and $D$, such that the following holds.
Let $G$ be a finite tree of maximum degree at most $D$ and let $f\in E(G)$. 
Then, for every $R\ge 1$,
\[ \Big| \sum_{e:\delta R\le d(e,f)\le R} \left( \E_w w_e X_e^R(G)
- \E_w w_e X_e^R(G\setminus f) \right) \Big| \le C e^{-cR}. \]
\end{prop}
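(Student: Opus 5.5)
The plan is to exploit the tree structure and the alternating-path structure of matching perturbations. Convert the sum of expectations into the expected size of a single path-like object, whose length has an exponential tail independent of the branching factor $D$.

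\emph{Step 1: reduction to a change of indicators.} Fix $e$ with $\delta R\le d(e,f)\le R$, and put $H_e:=\bB_e^R(G)$. Since $G$ is a tree, $\bB_e^R(G\setminus f)$ is the connected component of $H_e\setminus f$ containing $e$, so $X_e^R(G\setminus f)=X_e(H_e\setminus f)$. Hence the summand equals $\E_w w_e\bigl(X_e(H_e)-X_e(H_e\setminus f)\bigr)$. By \eqref{eq:edge_MWM_rep}, $X_e(H)=\ind\{w_e>\tau_e(H)\}$, where the threshold $\tau_e(H)$ is a sum of two vertex bonuses not involving $w_e$. I will use the memoryless identity $\E[w\ind_{\{w>t\}}]=(1+t)e^{-t}$ and the exponential tails of bonuses (each is at most a maximum of $D$ exponentials). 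By Cauchy--Schwarz, the summand is then bounded by $C\,\prob_w\bigl(X_e(H_e)\neq X_e(H_e\setminus f)\bigr)^{1/2}$. To avoid losing the square root after summation, I would instead keep $\prob_w(\tau_e(H_e)\ne\tau_e(H_e\setminus f))$, which has the same form. It therefore suffices to show
\[\sum_{e:\,\delta R\le d(e,f)\le R}\prob_w\bigl(\tau_e(H_e)\neq \tau_e(H_e\setminus f)\bigr)\le Ce^{-cR}.\]

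\emph{Step 2: propagation along the path.} In a tree, deleting $f$ changes $\tau_e$ only through the cavity recursion \eqref{eq:bonus_cavity_recursion} along the unique path $f=g_0,g_1,\dots,g_k=e$. The message passed along $g_j\to g_{j+1}$ must change at every step $j$. Whether the change survives step $j$ depends only on $w_{g_{j+1}}$ and on the bonuses of the side subtrees hanging off the path at that vertex. A change at a vertex propagates only if that edge is the unique maximizer in \eqref{eq:bonus_cavity_recursion} in both configurations. So the event ``the change reaches $e$'' implies that the path $g_0,\dots,g_k$ is an alternating path for $M_{H_e}\triangle M_{H_e\setminus f}$. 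Summed over $e$ at distance $k$, these events for distinct $e$ are (nearly) disjoint branches of a single alternating path, because the symmetric difference of $M_H$ and $M_{H\setminus f}$ is one path through $f$. This is the mechanism that beats the naive $D^k$ count.

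\emph{Step 3: uniformizing over the truncations; this is the main obstacle.} The difficulty is that $H_e$ depends on $e$. The side subtree hanging off $g_j$ in $H_e$ is truncated at depth $R-(k-j)$, which is small near $f$ when $k\approx R$. So different $e$ see different side messages, and Step 2's single-path counting does not apply literally. My plan is to replace the side messages by worst-case boundary conditions, using the uniformity over $A\subseteq\partial\bB^r$ in Theorem~\ref{thm:vertex_bonus}.

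Split the path at its midpoint $j=\lfloor k/2\rfloor$. The segment near $e$ ($j\ge k/2$) has side depths at least $R-k/2\ge R/2$. There I would couple the messages of $H_e$ with those of the fixed reference graph $\bB_f^{2R}(G)$ up to an error $Ce^{-cR}$ per edge, summed over at most $D^{O(\delta R)}$ relevant ``entry'' edges. Choosing the decomposition so that only $D^{\epsilon R}$ terms incur this error, with $\epsilon$ small relative to $c$, keeps the total at $e^{-cR/2}$.

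In the reference graph, the probability that the alternating path started at $f$ passes through a given edge is summable over all edges at distance at least $\delta R/2$. Its expected length beyond distance $r$ is at most $Ce^{-cr}$, by \eqref{eq:corr_decay_ind} applied at the edges along the path together with the single-path structure. This gives $Ce^{-c\delta R}$.

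I expect this coupling to be the hard part. If it fails, my fallback is a stronger ``sup over boundary'' version of the path-transmission estimate: show that transmission across a segment of length $\ell$ has probability $\le Ce^{-c\ell}$ uniformly over side boundary conditions. Proving this would require revisiting the contraction argument of \cite{lamsen2026} rather than using Theorem~\ref{thm:vertex_bonus} as a black box.
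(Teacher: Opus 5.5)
There is a genuine gap, and it lies where the paper does its main work (Section~\ref{sec: local_approximation}). Three things fail: the reduction in Steps~1--2 replaces the right event by one without single-path structure, the tail bound for the alternating path is asserted without proof, and the $e$-dependence of $H_e$ is left unresolved.

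\textbf{Steps 1--2: the wrong event.} The bound $|\text{summand}|\le\prob_w(\tau_e(H_e)\ne\tau_e(H_e\setminus f))$ is correct, since $t\mapsto(1+t)e^{-t}$ takes values in $(0,1]$. But it enlarges the event you must control. The event $\{\tau_e(H_e)\neq\tau_e(H_e\setminus f)\}$ says that the cavity message coming from the $f$-side changes at the near endpoint of $e$. It does not imply $e\in\I_f(H_e)$: take $w_e$ above both thresholds. Your transmission rule is also misstated. A change passes through $x_j$ iff the incoming path edge is the strict positive maximizer in the configuration with the \emph{smaller} incoming message, not a maximizer in both configurations. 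Most importantly, these forward changes branch. If the incoming edge is the overall strict maximizer at $x_j$, the change is transmitted along every outgoing edge at $x_j$. So even in one fixed tree these events are not pieces of a single alternating path, and the ``at most two edges per level'' count is lost. The paper keeps $A_e=\{e\in\I_f(H_e)\}$ and pays only a factor $D^{r/p}$ via H\"older with large $p$.

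\textbf{The tail bound.} Even for the genuine alternating path in a fixed tree, the exponential tail does not follow from \eqref{eq:corr_decay_ind}. Applying it edge by edge at distance $r$ and summing costs $D^re^{-cr}$, and nothing forces $c>\log D$. The single-path property gives $\sum_{d(e,f)=r}\ind_{\{e\in\I_f\}}\le 2$. That bounds the sum of probabilities by the probability of reaching level $r$, but says nothing about that probability. This tail is Lemma~\ref{lem:tree_boundary_tail}. The paper obtains it from the memoryless product bound of Lemma~\ref{lem:product_upper_bound_tree} and the branch-summed contraction $\sum_{y\in I}J_{x,y}\le\eta=1-(2D)^{-D}$ of Lemma~\ref{lem:q_jacobian_contraction}.

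\textbf{Step 3: the $e$-dependence of $H_e$.} For $d(e,f)=k$, the side subtree at the path vertex at distance $j$ from $f$ is truncated at depth $R-k+j$. When $k\approx R$ this is only about $j$. Your midpoint split treats only the half near $e$ and leaves the half near $f$, where truncation is worst, untouched. A coupling error of order $e^{-cR/2}$ per edge must also still be summed over up to $D^R$ edges $e$. Nothing in your argument shows why only $D^{\epsilon R}$ terms would incur it.

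The paper avoids any union bound over $e$, in four steps:
\begin{itemize}
\item Via the exact memoryless identity \eqref{id:J_iter}, it shows $\E\prod_j\rho_j^{\rm loc}(g)\le\prod_{j\ge1}J^{H_g}_{x_j,x_{j+1}}$.
\item It discards the first $m_0$ factors; after that, the agreement depth at the $j$-th remaining vertex is $m_0+j$, growing linearly along the path.
\item Lemma~\ref{lem:q_jacobian_stability} then perturbs each coefficient additively by $C_0\gamma^{m_0+j}$.
\item This perturbation is absorbed into the contraction margin, $\eta+(D-1)C_0\gamma^{m_0+s}\le\theta<1$, in a backward induction over the tree (Lemmas~\ref{lem:annular_sum} and~\ref{lem:local_alt_path}).
\end{itemize}

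\textbf{Your fallback.} It targets the wrong quantity. A per-segment transmission bound $Ce^{-c\ell}$, even uniform in boundary conditions, must still be summed over $\sim D^\ell$ branches. What is needed is contraction of the \emph{branch-summed} weights, which is exactly the $q$-Jacobian estimate.
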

\begin{prop}
\label{prop:far_delete_edge_tail}
Fix $\delta \in (0, 1)$. There exist an integer $A_0 \ge 2$ and constants $C >0$, and $c>0$, depending only on $\delta$ and $D$, such that the following holds.  Let $R\ge 1$ and $A\ge A_0$, and let $G$ be a finite simple graph with maximum degree at most $D$ such that $\bB_e^{AR}(G)$ is a tree for every edge $e\in E(G)$. Then, for
every edge $f\in E(G)$,
\begin{equation}\label{eq:far_delete_edge_tail}
  \Big| \sum_{e:\,d(e,f)\ge \delta R} \big(\E_w w_e X_e(G) - \E_w w_e X_e(G\setminus f) \big) \Big| \le C e^{-cR}.  
\end{equation}
\end{prop}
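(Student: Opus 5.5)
The plan is to use the combinatorial structure of $\M_G\,\triangle\,\M_{G\setminus f}$. For a.s.\ distinct weights, deleting a single edge $f=(uv)$ changes the maximum weight matching along one alternating path (or cycle) $P_f$ through $f$, and nowhere else. If $f\notin\M_G$, the two matchings coincide and the sum in \eqref{eq:far_delete_edge_tail} vanishes. Otherwise $X_e(G)\neq X_e(G\setminus f)$ exactly for $e\in P_f$, so pointwise
\[
\Big|\sum_{e:\,d(e,f)\ge\delta R} w_e\big(X_e(G)-X_e(G\setminus f)\big)\Big|\le \sum_{e\in P_f,\ d(e,f)\ge \delta R} w_e .
\]
So it suffices to show that $P_f$ rarely reaches distance $\delta R$, and to control its weight when it does.

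\textbf{Step 1 (tree region).} Inside $\bB_f^{AR}(G)$, which is a tree, a simple path through $f$ consists of two monotone arms moving away from $f$. It therefore contains at most two edges at each distance level $k$ from $f$. Hence
\[
\sum_{e\in P_f,\ \delta R\le d(e,f)\le AR} w_e\le \sum_{k=\delta R}^{AR}\ \max\{w_e: d(e,f)=k,\ e\in P_f\}\cdot 2\,\ind_{E_k},
\]
where $E_k$ is the event that $P_f$ reaches the sphere $\partial\bB_f^k(G)$.

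\textbf{Step 2 (key estimate).} The main step is $\prob_w(E_k)\le Ce^{-ck}$ for $k\le AR$. I would derive this from Theorem~\ref{thm:vertex_bonus} applied to the ball $\bB_f^k(G)$, viewed from $f$. Suppose the alternating path exits the ball at a boundary vertex $x$. Then $\M_G$ restricted to the ball is a maximum weight matching of $\bB_f^k(G)\setminus\mathsf A$ for the set $\mathsf A$ of boundary vertices matched outside. Toggling $x$ in $\mathsf A$ (equivalently, switching along the exiting arm) should produce boundary conditions $A,A'$ under which $\ind_{\{f\in\M_{\bB_f^k\setminus A}\}}$ and $\ind_{\{f\in\M_{\bB_f^k\setminus A'}\}}$ differ. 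This event has probability at most $Ce^{-ck}$ by Theorem~\ref{thm:vertex_bonus}. Making this switching argument rigorous, i.e.\ showing that a long alternating path forces sensitivity of $f$'s status to some boundary condition, is the step I expect to be the main obstacle. If the direct toggle fails, I would instead work with the vertex-bonus recursion \eqref{eq:bonus_cavity_recursion} along the path, using Lemma~\ref{lem: edge_bonus}. Once the bound on $\prob_w(E_k)$ is in hand, Cauchy--Schwarz gives $\E_w[\max w\cdot\ind_{E_k}]\le C\sqrt{k}\,e^{-ck/2}$, since a maximum of at most $D^{k+1}$ exponentials has second moment $O(k^2)$. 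Summing over $k\ge\delta R$ yields $Ce^{-c\delta R}$.

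\textbf{Step 3 (outside the tree region).} On $E_{AR}$ the path may wander through non-tree parts of $G$, so I do not bound it edge by edge. Instead I use the identity
\[
\sum_{e}w_e\big(X_e(G)-X_e(G\setminus f)\big)=\B(f,G)\in[0,w_f].
\]
This gives that the far sum equals $\B(f,G)$ minus the near sum over $d(e,f)<\delta R$. The near sum is bounded by $2\delta R\max_{d(e,f)<\delta R}w_e$, again using the tree structure of the near region. Thus on $E_{AR}$ the far sum is at most $C R\cdot\max_{d(e,f)\le R}w_e + w_f$, which has second moment $\mathrm{poly}(R)$. Cauchy--Schwarz with $\prob_w(E_{AR})\le Ce^{-cAR}$ then gives a bound $Ce^{-cR}$.

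Combining Steps 1 and 3 proves \eqref{eq:far_delete_edge_tail}. Here $A_0$ only needs to exceed $1$ (taken $\ge2$) so that the sphere $\partial\bB_f^{k}(G)$ at $k=\delta R$ lies inside the tree region. Constants depend on $\delta$ and $D$ through the exponent $c\delta$.
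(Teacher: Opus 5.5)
The reduction to the alternating path $\I_f(G)$ matches the paper. So does trading the far sum for the near sum via $\sum_e w_e(X_e(G)-X_e(G\setminus f))=\B(f,G)\in[0,w_f]$.

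The genuine gap is Step~2. The estimate $\prob_w(E_k)\le Ce^{-ck}$ is the heart of the proposition, and the toggle mechanism you sketch does not prove it. On $E_k$ you know $f\in\M_G$, hence $f\in\M_{\bB_f^k(G)\setminus\mathsf A}$, but nothing forces $f\notin\M_{\bB_f^k(G)\setminus\mathsf A'}$.

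The reason is that deleting $f$ frees its endpoints, and the resulting cascade is driven by the cavity recursion below $x_0$. On a tree, let $T,T_-$ be the components of $G\setminus f$ containing $x_0,x_{-1}$. The arm of $\I_f(G)$ in $T$ is the component of $\M_T\triangle\M_{T\setminus x_0}$ at $x_0$. Meanwhile, $f$'s own status is insensitive to the boundary whenever its margin $w_f-\B(x_0,T)-\B(x_{-1},T_-)$ is large.

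For example, take the path $x_{-1}x_0x_1x_2x_3$ with $f=(x_{-1}x_0)$ and weights $100,2,3,2$. Then $\M_G=\{f,(x_1x_2)\}$ and $\M_{G\setminus f}=\{(x_0x_1),(x_2x_3)\}$, so $\I_f(G)$ exits $\bB_f^2(G)$ at $x_2$. Yet $f$ lies in the maximum weight matching of $\bB_f^2(G)\setminus A$ for both $A=\emptyset$ and $A=\{x_2\}$.

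Your fallback through Lemma~\ref{lem: edge_bonus} or \eqref{eq:corr_decay_ind} only gives per-edge bounds $\prob_w(e\in\I_f(G))\le Ce^{-ck}$. These cannot be summed over the $\asymp D^k$ edges at distance $k$, since nothing guarantees $c>\log D$.

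The missing idea is a bound that sums over all branches at once. Lemma~\ref{lem:product_upper_bound_tree} uses memorylessness to bound the probability that a fixed path is alternating by $\E\bigl[e^{-\B(x_0,T)}\prod_j\xi_j/(1+Z_j)\bigr]$. The $q$-Jacobian contraction of Lemma~\ref{lem:q_jacobian_contraction} then shows that these weights, summed over children, contract by $\eta=1-(2D)^{-D}$ per level; this gives Lemma~\ref{lem:tree_boundary_tail}. Finally, Lemma~\ref{prop:boundary_tail} transfers the bound to $G$ by correlation decay, but only for radii up to $AR/K$, not $AR$.

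Step~3 also contains an error. On $E_{AR}$ the path can leave $\bB_f^{AR}(G)$, go around a cycle, and re-enter $\bB_f^{\delta R}(G)$ repeatedly through different boundary vertices. So its near part need not consist of two monotone arms. Only the crude bound $CD^{\lceil\delta R\rceil}\max_{d(e,f)\le\delta R}w_e$ is available, which is what the paper uses on $E_S$ with $S=\lfloor AR/(4K)\rfloor$.

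Cauchy--Schwarz then needs $\prob_w(E_S)^{1/2}$ to beat $D^{\delta R}$. This is precisely why $A_0$ must be large in terms of $\delta$, $D$, $K$ and the decay rate; the paper requires $\kappa A_0\ge 16\delta K\log D$. Your assertion that $A_0\ge 2$ suffices is therefore not justified. Given the tail bound, this part is easily repaired along the paper's lines, but Step~2 needs the new ingredient above.
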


We first explain how Theorem~\ref{thm:local_perturbative_bound} follows from these three results. Fix $0<\delta<1$. Using the triangle inequality and the fact that
$X_e^R(G)=X_e^R(G\setminus f)$ whenever $d(e,f)>R$, we can bound the left
hand side of \eqref{eq:local_perturbative_bound} by
\begin{gather}
\Big| \sum_{e:\,d(e,f)\le \delta R} \left( \E_w w_e X_e(G)-\E_w w_e X_e^R(G) \right) \Big|
+
\Big| \sum_{e:\,d(e,f)\le \delta R} \left(
\E_w w_e X_e(G\setminus f)-\E_w w_e X_e^R(G\setminus f) \right) \Big|
\label{eq:perturb_bd1}
\\[0.5em] {}+
\Big| \sum_{e:\,\delta R<d(e,f)\le R} 
\left( \E_w w_e X_e^R(G)-\E_w w_e X_e^R(G\setminus f) \right) \Big|
\label{eq:perturb_bd2}
\\[0.5em] {}+
\Big| \sum_{e:\,d(e,f)>\delta R}
\left( \E_w w_e X_e(G)-\E_w w_e X_e(G\setminus f)\right)\Big|.
\label{eq:perturb_bd3}
\end{gather}
Choose $\delta>0$ sufficiently small and assume $A\ge 1$. Since $\bB_f^R(G)$ is a tree,  the hypotheses of
Lemma~\ref{lemma:near_delete_edge_tail} are satisfied for the first term in
\eqref{eq:perturb_bd1}. Hence this term is bounded by $Ce^{-cR}$.

The second term is bounded by the same argument. Indeed, although
$f\notin E(G\setminus f)$, the proof of
Lemma~\ref{lemma:near_delete_edge_tail} applies verbatim, since it uses $f$
only to measure distance in the ambient graph $G$ and to ensure that the
relevant local balls are contained in the tree $\bB_f^R(G)$. Hence
\eqref{eq:perturb_bd1} is bounded by $Ce^{-cR}$.

For the same choice of $\delta$, \eqref{eq:perturb_bd2} is bounded above by
$Ce^{-cR}$ for every $A\ge 2$, by Proposition~\ref{prop:annular_delete_edge}.
To apply Proposition~\ref{prop:annular_delete_edge}, note that although the
ambient graph $G$ need not be a tree, it may be replaced by $\bB_f^{2R}(G)$, which is 
indeed a tree by the hypothesis of
Theorem~\ref{thm:local_perturbative_bound}. Moreover, for any
edge $e$ with $d(e,f)\le R$, we have
$\bB_e^R(G)\subseteq \bB_f^{2R}(G)$, and hence $X_e^R(G)=X_e^R\bigl(\bB_f^{2R}(G)\bigr).$

Finally, again for the same
$\delta$, Proposition~\ref{prop:far_delete_edge_tail} implies that, for all
sufficiently large $A$, \eqref{eq:perturb_bd3} is bounded above by
$Ce^{-cR}$. Combining these three estimates proves
Theorem~\ref{thm:local_perturbative_bound}.

The proofs of Lemma~\ref{lemma:near_delete_edge_tail}, Propositions~\ref{prop:annular_delete_edge} and \ref{prop:far_delete_edge_tail} are  lengthy, so we postpone them to Section~\ref{sec: local_approximation}.

\subsection{CLT for MWM on locally tree-like graphs}\label{sec:chatterjee_CLT}
Fix $D\ge2$ and $\rho>0$, and let $\mathcal K_n$ denote the collection of graphs on $n$ vertices with maximum degree at most $D$ and number of edges at least $\rho n$.
For $\ell\ge1$, define $\mathcal K_n^{\square \ell} :=\bigl\{ G\in\mathcal K_n:\operatorname{girth}(G)\ge\ell \bigr\}.$

We have the following uniform CLT for MWM.  Let $(\ell_n)_{n \ge 1}$ be any divergent sequence of positive integers. 
\begin{equation}\label{eq:CLT_locally_tree}
    \max_{ G \in \mathcal{K}_{n}^{\square \ell_n} } d_{\mathrm{KS}} \Big( \mathcal{L}\Big( \frac{\W_G - \E_w \W_G}{\sqrt{\Var_w(\W_G)}} \Big), \mathrm N(0, 1)\Big) \to 0, \quad \text{ as } n \to \infty. 
\end{equation} 
The CLT is obtained by combining Chatterjee's normal approximation result \cite{Chatterjee14} with the correlation decay estimate of Lemma~\ref{lem: edge_bonus}.
\begin{thm}[Chatterjee's perturbative normal approximation
{\cite[Corollary~3.2]{Chatterjee14}}]
\label{cor:chatterjee_normal_approx}
Let $I$ be a finite index set, let $X=(X_i)_{i\in I}$ be a family of
independent random variables, and let $\widetilde X$ be an independent
copy of $X$. For $A\subseteq I$, let $X^A$ be obtained from $X$ by
replacing $X_i$ with $\widetilde X_i$ for every $i\in A$. Define, for $i\notin A$, $\Delta_i^A f :=
f(X^A)-f(X^{A\cup\{i\}})$ and $\Delta_i f:=\Delta_i^\emptyset f.$
Suppose that $\sigma^2:=\Var(f(X))>0$ and that there are constants
$c(i,j)\ge0$ such that
\[ \Cov\left( \Delta_i f\,\Delta_i^A f, \Delta_j f\,\Delta_j^B f \right)
\le c(i,j) \]
for all $A\subseteq I\setminus\{i\}$ and
$B\subseteq I\setminus\{j\}$. Then
\[ d_{\mathrm{KS}}\Big( \mathcal L\Big(\frac{f(X)-\E f(X)}{\sigma}\Big), \mathrm N(0,1)\Big)
\le \frac{\sqrt{2}}{\sigma} \Big(\sum_{i,j\in I}c(i,j)\Big)^{1/4}
+ \frac{1}{\sigma^{3/2}} \Big(\sum_{i\in I}\E|\Delta_i f|^3\Big)^{1/2}. \]
\end{thm}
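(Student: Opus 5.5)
Since this is a result imported from \cite{Chatterjee14}, the plan is to reconstruct Chatterjee's generalized perturbative Stein argument rather than prove anything new. Write $W:=(f(X)-\E f(X))/\sigma$ and introduce the interpolating quantity
\[
T:=\frac12\sum_{A\subsetneq I}\ \sum_{i\notin A}\kappa_{|A|}\,\Delta_i f\,\Delta_i^A f,\qquad \kappa_k:=\frac{1}{\binom{|I|}{k}(|I|-k)}.
\]
\textbf{Step 1 (exact identity).} Show that for every Lipschitz $g$,
\[
\E\big[(f(X)-\E f(X))\,g(f(X))\big]=\E\Big[\frac12\sum_{A}\sum_{i\notin A}\kappa_{|A|}\,\Delta_i g(f)\,\Delta_i^A f\Big].
\]
This follows by telescoping $f(X)-f(\widetilde X)$ along a uniformly random ordering of $I$, which explains the weights $\kappa_k$. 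One then uses the exchangeability of $(X,\widetilde X)$ coordinatewise to antisymmetrize. Taking $g(x)=x$ gives $\E T=\sigma^2$.

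\textbf{Step 2 (Taylor expansion).} Expand $\Delta_i g(f)=g'(f)\Delta_i f+O(\|g''\|_\infty|\Delta_i f|^2)$. Inserting this into Step 1 gives
\[
\Big|\E[W g(W)]-\E\big[g'(W)\,T/\sigma^2\big]\Big|\lesssim \frac{\|g''\|_\infty}{\sigma^3}\sum_i\E|\Delta_i f|^3.
\]
Here one uses Hölder on $|\Delta_i f|^2|\Delta_i^A f|$ and the fact that $\Delta_i^Af$ has the same law as $\Delta_i f$. Subtracting $\E g'(W)$, the first-order error becomes $\E|T-\sigma^2|/\sigma^2\le\sqrt{\Var T}/\sigma^2$.

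\textbf{Step 3 (variance of $T$).} Expand $\Var T$ as a double sum over $(A,i),(B,j)$ of $\kappa_{|A|}\kappa_{|B|}\Cov(\Delta_if\Delta_i^Af,\Delta_jf\Delta_j^Bf)$. Bound each covariance by the hypothesis $c(i,j)$. Since $\sum_{A\not\ni i}\kappa_{|A|}=1$ for each fixed $i$, this gives $\Var T\le\frac14\sum_{i,j}c(i,j)$.

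\textbf{Step 4 (Kolmogorov distance).} The Wasserstein-type bound from Steps 2–3 is routine. The main obstacle is passing to $d_{\mathrm{KS}}$ with the stated exponents: $\sigma^{-1}(\sum c)^{1/4}$ and $\sigma^{-3/2}(\sum\E|\Delta f|^3)^{1/2}$. These are exactly what one gets from square roots of the smooth-test-function errors. I would take the Stein solution $g_t$ for a Lipschitz smoothing $h_{t,\epsilon}$ of $\mathbf 1_{(-\infty,t]}$, for which $\|g''\|\lesssim 1/\epsilon$. This yields $d_{\mathrm{KS}}\lesssim \epsilon+\sqrt{\Var T}/\sigma^2+\epsilon^{-1}\sigma^{-3}\sum\E|\Delta_if|^3$. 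Optimizing $\epsilon$ gives the square root of the third-moment term, namely the $\sigma^{-3/2}(\cdot)^{1/2}$ term. For the $T$-term one needs a further square root, giving $(\sum c)^{1/4}/\sigma$. I expect to obtain it by the same smoothing trade-off applied to the first-order error, or by the standard $d_{\mathrm{KS}}\le 2\sqrt{d_{\mathrm W}}$ comparison. Tracking the constants ($\sqrt2$ and $1$) is the delicate part, and I would follow Chatterjee's bookkeeping there.
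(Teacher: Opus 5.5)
Your plan is correct and is the argument behind the cited result. The paper gives no proof of its own; it imports the statement from Chatterjee, whose proof is exactly your Steps 1--3: the interpolation identity, the Taylor/H\"older step using $\Delta_i^A f\stackrel{d}{=}\Delta_i f$, and $\Var T\le\frac14\sum_{i,j}c(i,j)$ from $\sum_{A\not\ni i}\kappa_{|A|}=1$.

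The one thing to correct is your reading of Step 4: there is no ``further square root'' to earn. For the smoothed indicator $h=h_{t,\epsilon}$, the bounded-$h$ Stein estimate gives $\|g_h'\|_\infty\le 2\|h-\E h(Z)\|_\infty\le 2$ uniformly in $\epsilon$, together with $\|g_h''\|_\infty\le 2/\epsilon$. So the first-order term carries no $\epsilon$, and your own smoothing bound, optimized in $\epsilon$, already gives
\[
d_{\mathrm{KS}}\le \frac{2\sqrt{\Var T}}{\sigma^2}+\Big(\frac{2}{\pi}\Big)^{1/4}\frac{\big(\sum_i\E|\Delta_i f|^3\big)^{1/2}}{\sigma^{3/2}}
\le x^2+\frac{\big(\sum_i\E|\Delta_i f|^3\big)^{1/2}}{\sigma^{3/2}},
\qquad x:=\frac{\big(\sum_{i,j}c(i,j)\big)^{1/4}}{\sigma}.
\]
If $x\le\sqrt2$, then $x^2\le\sqrt2\,x$. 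If $x>\sqrt2$, the claimed right-hand side exceeds $1\ge d_{\mathrm{KS}}$. Either way the stated inequality follows with its constants.

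The square root on the covariance term is therefore a weakening. It is what the cruder $\|g_h'\|\lesssim 1/\epsilon$ route or the $d_{\mathrm{KS}}\lesssim\sqrt{d_{\mathrm W}}$ route produces. Note also that the comparison $d_{\mathrm{KS}}\le 2\sqrt{d_{\mathrm W}}$ you mention would only give the constant $\sqrt2$, not $1$, in front of the third-moment term.
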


Fix $G\in\mathcal{K}_{n}^{\square \ell_n}$  , and write $m:=|E(G)|$ and  $\sigma_G^2:=\Var_w(\W_G)$.
Here $m\asymp n$, and Lemma~\ref{lem: variance_lower_bound} gives
$\sigma_G^2\asymp m$, uniformly in $G$.

Let $(\widetilde w_e)_{e\in E(G)}$ be an independent copy of the edge
weights. For $S\subseteq E(G)$, let $\W_G^S$ denote the MWM weight after
replacing $w_e$ by $\widetilde w_e$ for $e\in S$, and, for $e\notin S$,
set
\[ \Delta_e : = \W_G  - \W_G^{\{e\}},  \quad \Delta^S_{e}:= \W_G^S-\W_G^{S\cup\{e\}}.
\]
Choose $a>0$ such that $2a\log D<1$, and set $r_n:=\left\lfloor \min \big(  (\ell_n-4)/2,  a\log n \big) \right\rfloor.$
Then $r_n\to\infty$, $D^{2r_n+1}=o(n)$, and since
$2r_n+3<\ell_n$,
$\bB_e^{r_n}(G)$ is a tree for every $e\in E(G)$.

If $\B^S(e, G) $ denotes the bonus of the edge $e$ after
replacing $w_f$ by $\widetilde w_f$ for every edge $f\in S$, then  
\[\Delta^S_{e} = \B^S(e, G) - \B^{S \cup \{e\}}(e, G). \]
Hence, by the correlation decay of edge-bonus as given in Lemma~\ref{lem: edge_bonus}(i), H\"older's inequality, and the bounded moments of the
exponential weights,  the product $\Delta_{e}\Delta^S_{e}$ can be replaced in $L^2$ by
its analogue computed on $\bB_e^{r_n}(G)$, i.e., by 
\[ \Big( \B(e, \bB_e^{r_n}(G)) - \B^{\{e\}}(e, \bB_e^{r_n}(G)) \Big) \Big( \B^S(e, \bB_e^{r_n}(G)) - \B^{S \cup \{e\}}(e, \bB_e^{r_n}(G)) \Big), \]
with error at most
$Ce^{-cr_n}$, uniformly in $e$ and $S$. The localized variables
corresponding to $e$ and $e'$ are independent whenever their
$r_n$-neighborhoods are disjoint. Hence we may choose
$c_G(e,e')$ so that
\[ c_G(e,e')
\le C\ind_{\{ \bB_e^{r_n}(G)\cap\bB_{e'}^{r_n}(G)\ne\emptyset\}} + Ce^{-cr_n},\]
and consequently
\[ \sum_{e,e'\in E(G)}c_G(e,e') \le C\left(  mD^{2r_n+1}+m^2e^{-cr_n} \right).\]
Moreover, since $|\Delta_{e}|
\le |w_e-\widetilde w_e|$, we have $\sum_{e\in E(G)}\E_w|\Delta_{e}|^3
\le Cm.$ Theorem~\ref{cor:chatterjee_normal_approx} now gives
\[
d_{\mathrm{KS}} \Big(
    \mathcal L \Big ( \frac{\W_G-\E_w\W_G}{\sqrt{\Var_w(\W_G)}}
    \Big),\mathrm N(0,1)\Big) \le
C\Big(  \frac{D^{2r_n+1}}{m}+e^{-cr_n} \Big)^{1/4} + Cm^{-1/4} =o(1),\]
uniformly in $G$. This proves \eqref{eq:CLT_locally_tree}.

\section{Maximum weight matching on configuration model}
One of the main ideas in the proof of Theorem~\ref{thm: annealed_CLT}
is to first prove the relevant central limit theorem for the configuration
model (which we will define below)  and then transfer it to the uniformly random simple graph.

A multigraph is a triple $G=(V(G), \cE(G),\partial_G)$, where $\partial_G:\cE(G)\to \{\{u,v\}:u,v\in V(G)\}$
    assigns to each edge copy its unordered pair of endpoints. Distinct elements of $\cE(G)$ are regarded as distinct edge copies, even if they have the same endpoints. In particular, loops are allowed, corresponding to $\partial_G(e)=\{v\}$ for some $v\in V(G)$.  The degree of a vertex $v\in V(G)$ is the number of edge ends incident to $v$, with each loop counted twice.  
    We write $\mathcal{MG}_n$ for the set of all finite multigraphs on $n$ vertices, and
$\mathcal{MG}_{n,D}$ for the subset of multigraphs with maximum degree at most~$D$.

We extend the preceding simple-graph notation to multigraphs by interpreting edges as edge copies and distances in the underlying simple graph. Let $G = (V(G), \cE(G), \partial_G)$ be a multigraph. For $A \subseteq V(G)$, we write $G \setminus A$ for the induced submultigraph of $G$ on the vertex set $V(G)\setminus A$. Similarly, for $F \subseteq \cE(G)$, we write $G \setminus F := \bigl(V(G), \cE(G)\setminus F,\ \partial_G|_{\cE(G)\setminus F}\bigr).$

For a multigraph $G=(V(G),\cE(G),\partial_G)$, vertex distances are
measured in the underlying simple graph obtained by deleting loops and
identifying parallel edges. For $v\in V(G)$ and $e\in\cE(G)$, define $d_G(v,e) := \min_{x\in\partial_G(e)}d_G(v,x).$ For $r\ge0$, let $\bB_v^r(G)$ and $\bB_e^r(G)$ denote the induced
submultigraphs of $G$ on the vertex sets $\{x\in V(G):d_G(x,v)\le r\}$ and $\{x\in V(G):d_G(x,e)\le r\},$
respectively.

A matching in  a multigraph $G=(V(G),\cE(G),\partial_G)$  is a set $M\subseteq \cE(G)$ such that $|\partial_G(e)|=2$ for all $e \in M$ and $\partial_G(e)\cap \partial_G(f)=\emptyset$ for all distinct $e,f\in M$.
Equivalently, a matching in a multigraph is a collection of edge copies such that no vertex is incident to more than one chosen edge. In particular, loops are automatically excluded. Note that this extends the usual notion of a matching in a simple graph.  Given a multigraph $G$ with edge-weights $(w_e)_{e\in \cE(G)}$, the weight of a matching $M$ is  $\sum_{e\in M} w_e$. A maximum weight matching (MWM) of $G$ is a matching $\M_G$ of maximal weight, and its weight is denoted by $\W_G := \sum_{e\in \M_G} w_e.$ 

\subsection{Configuration model}\label{subsec:config}

Let $\mathbf{d}= (d_1, d_2, \ldots, d_n)$ be a degree sequence satisfying \eqref{eq: degree_seq_cond}, where we suppress the dependence on $n$ for ease of notation. A standard method to construct a random simple graph $\G_n$ with this degree sequence is via the configuration model introduced in \cite{Bollobas}, which we now describe. Assume that $N := \sum_{i=1}^n d_i$ is even.  Assign $d_i$ half-edges to vertex $i$, and let $\mathbb{H}=\{h_1,\ldots,h_N\}$
denote the set of all half-edges. Write $\mathcal{P}_{\mathbb H}$ for the set of pairings of $\mathbb H$, that is, the set of partitions of $\mathbb H$ into $N/2$ blocks of size $2$. We identify each pairing with the corresponding fixed-point-free involution $\sigma$ on $\mathbb H$, that is, a permutation satisfying
\[ \sigma^2=\mathrm{id} \qquad\text{and}\qquad
\sigma(h)\neq h \quad \text{for all } h\in\mathbb H.\]

Define a map $\chi:\mathcal{P}_{\mathbb H}\to\mathcal{MG}_n$ as follows. For a pairing $\sigma\in\mathcal{P}_{\mathbb H}$, let $\chi(\sigma)$ be the multigraph with vertex set $V=\{1,\ldots,n\}$ obtained by placing the half-edges $h_1,\ldots,h_{d_1}$ at vertex $1$, the half-edges $h_{d_1+1},\ldots,h_{d_1+d_2}$ at vertex $2$, and so on. We join the half-edges $h_i$ and $h_j$ to form an edge in $\chi(\sigma)$ if and only if $\sigma(h_i)=h_j$. Note that $\chi(\sigma)$ may contain self-loops and multiple edges.

When $\bs$ is drawn uniformly from $\mathcal{P}_{\mathbb H}$, the resulting random multigraph $\C_n:=\chi(\bs)$
is called the configuration model. It is well known that
\[ \G_n \stackrel{d}{=} \C_n \mid \{\text{$\C_n$ is simple}\},
\]
where $\G_n$ denotes the uniformly chosen simple graph on $n$ vertices with degree sequence $\mathbf{d}$. Moreover, under \eqref{eq: degree_seq_cond}, the probability that $\C_n$ is simple is bounded away from $0$.

One of the main tools for studying the configuration model is the switching operation. For $1\le x,y\le N$, define the switch $\sw^{h_x,h_y}:\mathcal{P}_{\mathbb H}\to\mathcal{P}_{\mathbb H}$
as follows. Let $\sigma\in\mathcal{P}_{\mathbb H}$. If $\{h_x,\sigma(h_x)\} = \{h_y,\sigma(h_y)\}$, then we set $\sw^{h_x,h_y}(\sigma):=\sigma.$
Otherwise, if $\{h_x,\sigma(h_x)\} \ne \{h_y,\sigma(h_y)\}$, we define $\sigma':=\sw^{h_x,h_y}(\sigma)$ by deleting the pairs $\{h_x,\sigma(h_x)\}$ and $\{h_y,\sigma(h_y)\}$ from $\sigma$, and replacing them with the pairs $\{h_x,h_y\}$ and $\{\sigma(h_x),\sigma(h_y)\}$. Equivalently,
\begin{equation*}
\sigma'(h_x)=h_y, \quad  \sigma'(\sigma(h_x))=\sigma(h_y), \quad
\sigma'(h_y)=h_x, \quad \sigma'(\sigma(h_y))=\sigma(h_x),
\end{equation*}
and
\[\sigma'(h_z)=\sigma(h_z)
\quad\text{whenever}\quad
h_z\notin\{h_x,h_y,\sigma(h_x),\sigma(h_y)\}. \]

\begin{figure}[h!]
\centering
\includegraphics[scale=.55]{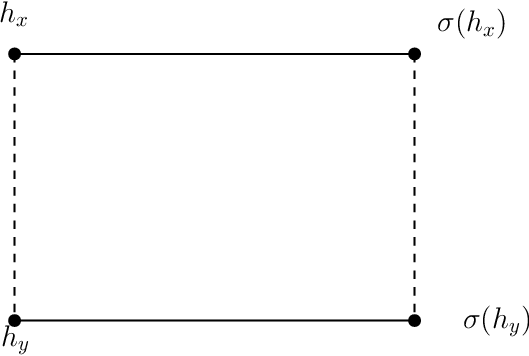}
\caption{The solid lines represent the pairings in $\sigma$, while the dashed lines represent the pairings in $\sw^{h_x,h_y}(\sigma)$.}
\label{fig:universe}
\end{figure}

Note that a switch always preserves the degree sequence of the resulting multigraph. We shall also use the following elementary fact.

\medskip

\noindent\textbf{Claim.} For any $1\le x,y\le N$, if $\bs\sim \mathrm{Unif}(\mathcal{P}_{\mathbb H})$, then $\sw^{h_x,\bs(h_y)}(\bs)\sim \mathrm{Unif}(\mathcal{P}_{\mathbb H}).$
\begin{proof}
For fixed $x,y$, define
\[ T_{x,y}(\sigma):=\sw^{h_x,\sigma(h_y)}(\sigma), \qquad \sigma\in\mathcal{P}_{\mathbb H}.
\]
It suffices to show that $T_{x,y}$ is a bijection on $\mathcal{P}_{\mathbb H}$.
If $x=y$ or $h_x=\sigma(h_y)$, then the two arguments
$h_x$ and $\sigma(h_y)$ of the switch belong to the same pair of
$\sigma$. Hence, by definition, $T_{x,y}(\sigma)=\sigma.$

Now assume that $x\ne y$ and $h_x\ne\sigma(h_y)$. Then also
$h_y\ne\sigma(h_x)$, so the pairs containing $h_x$ and $h_y$
are distinct. Write
\[ \sigma=\{\{h_x,h_{x'}\},\{h_y,h_{y'}\}\}\cup\omega, \]
where $h_x,h_y,h_{x'},h_{y'}$ are all distinct, and $\omega$ is
the union of the remaining pairs. Then
\[ T_{x,y}(\sigma) = \{\{h_x,h_{y'}\},\{h_y,h_{x'}\}\}\cup\omega.\]
Applying $T_{x,y}$ once more returns the original pairing $\sigma$.
Hence $T_{x,y}$ is an involution, and therefore a bijection on $\mathcal{P}_{\mathbb H}$.
\end{proof}

\subsection{Concentration for configuration model}
We first establish an Efron-Stein-type variance bound for functions of the configuration model $\C_n$. This is an edge-deletion reformulation of the Poincaré inequality for the random-transposition walk on the symmetric group. Related Efron–Stein formulations for uniform permutations appear in \cite[Lemma 2.1]{chengoldsteinrollin} and \cite[Lemma 4.4]{BarbourRollin}. We include a derivation for completeness.

\begin{lem}[Efron-Stein bound for configuration model]
    \label{cor: conf_var_bound}
	Let $f:\mathcal{MG}_n \to\mathbb{R}$ be a real-valued function defined on multigraphs on $n$ vertices. There exists an absolute constant $C$ such that
    \[ \Var(f(\C_n)) \leq C\E \sum_{e \in \cE(\C_n)} \big(f(\C_n) - f(\C_n \setminus e )\big)^2 + \frac{C}{N}\E  \sum_{e,e' \in \cE(\C_n)}  \big(f(\C_n \setminus  e ) - f(\C_n \setminus \{e, e'\})\big)^2,\]
 where $N = 2 | \cE(\C_n)|$.
\end{lem}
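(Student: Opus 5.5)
The plan is to obtain the bound from the Poincaré inequality for the random-transposition walk on the symmetric group, and then to convert each resulting switch increment into single-edge and double-edge deletion increments.

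\emph{Step 1: reduce to random transpositions.} Fix a reference pairing $\sigma_0\in\mathcal P_{\mathbb H}$. For a uniform permutation $\pi$ of $\mathbb H$, the conjugate $\pi\sigma_0\pi^{-1}$ is uniform on $\mathcal P_{\mathbb H}$, since $S_N$ acts transitively on pairings. Set $F(\pi):=f(\chi(\pi\sigma_0\pi^{-1}))$, so that $\Var f(\C_n)=\Var F(\pi)$. The random-transposition walk on $S_N$ has spectral gap $2/N$ (Diaconis--Shahshahani). Hence
\[
\Var F(\pi)\le C N\,\E\big(F(\pi)-F(\tau\pi)\big)^2 ,
\]
where $\tau=(h_x\,h_y)$ has $x,y$ uniform and independent of $\pi$. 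The constant is absolute once the $N^2$ ordered choices are normalized. Replacing $\pi$ by $\tau\pi$ replaces $\bs$ by $\tau\bs\tau$. If $h_x,h_y$ lie in distinct pairs, this exchanges $h_x$ and $h_y$ between their pairs, which is exactly a switch $\sw^{h_x,\bs(h_y)}(\bs)$; otherwise the pairing is unchanged. So, up to absolute constants,
\[
\Var f(\C_n)\le \frac{C}{N}\sum_{x,y}\E\big(f(\C_n)-f(\C_n')\big)^2,\qquad \C_n'=\chi\big(\sw^{h_x,\bs(h_y)}(\bs)\big).
\]

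\emph{Step 2: decompose a switch into deletions.} Let $e,e'$ be the edge copies of $\C_n$ through $h_x$ and $\bs(h_y)$, and let $g,g'$ be the two new edge copies of $\C_n'$. Then $\C_n\setminus\{e,e'\}=\C_n'\setminus\{g,g'\}$ as multigraphs. Telescoping through this common graph and applying Cauchy--Schwarz gives
\[
\big(f(\C_n)-f(\C_n')\big)^2\le 4\Big[\big(f(\C_n)-f(\C_n\setminus e)\big)^2+\big(f(\C_n\setminus e)-f(\C_n\setminus\{e,e'\})\big)^2
+\big(f(\C_n')-f(\C_n'\setminus g)\big)^2+\big(f(\C_n'\setminus g)-f(\C_n'\setminus\{g,g'\})\big)^2\Big].
\]

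\emph{Step 3: average over the choices of $x,y$.} For fixed $\bs$, a given edge copy $e$ is the edge through $h_x$ for $2N$ of the $N^2$ choices. So the first term contributes $\frac{C}{N}\cdot 2N\,\E\sum_e(f(\C_n)-f(\C_n\setminus e))^2$, which is the first term of the lemma. A given ordered pair $(e,e')$ arises from only $O(1)$ choices of $(x,y)$, so the second term contributes $\frac{C}{N}\E\sum_{e,e'}(\cdots)^2$, matching the second term of the lemma. The degenerate cases (no switch, or $e=e'$) contribute zero.

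For the terms involving $\C_n'$ we use the Claim that $\sw^{h_x,\bs(h_y)}(\bs)$ is uniform. More precisely, the map $T_{x,y}$ is an involution, and $g$ is the edge of $\C_n'$ through $h_x$ while $g'$ is the edge through $\bs(h_x)=(T_{x,y}\bs)(h_y)$. Hence, for each fixed $(x,y)$, the triple $(\C_n',g,g')$ has the same law as $(\C_n,e,e')$, and these terms are bounded exactly like the first two.

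\emph{Main obstacle.} The delicate point is the bookkeeping in Step 3. One must check carefully that the law of $(\C_n',g,g')$ matches that of $(\C_n,e,e')$ under the involution $T_{x,y}$, and that the multiplicity counts yield precisely one factor $1/N$ on the double-deletion sum and none on the single-deletion sum. Loops and parallel copies are harmless here, because all terms are expressed in terms of edge copies.
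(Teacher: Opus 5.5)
Your proposal is correct and follows essentially the same route as the paper. You transfer the random-transposition Poincaré inequality to pairings via the conjugate $\pi\sigma_0\pi^{-1}$ with left multiplication by $\tau$, so that $\tau\bs\tau=\sw^{h_x,\bs(h_y)}(\bs)$ is immediate; the paper uses the map $\psi$ and right multiplication followed by reindexing through $\pi$, which amounts to the same thing. The remaining steps are the paper's: the four-step deletion path through $\C_n\setminus\{e,e'\}=\C_n'\setminus\{g,g'\}$, the uniformity of the switched pairing under the involution $T_{x,y}$, and the multiplicity counts ($2N$ choices of $(x,y)$ per edge copy, exactly $4$ per ordered pair).
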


We begin with a variance bound for functions on $S_N$, the symmetric group on $N$ elements. This is useful because a uniformly random pairing can be represented using a uniformly random permutation. The random-transposition walk on $S_N$, viewed as a reversible Markov chain, has spectral gap $2/(N-1)$; see \cite{DS} and \cite[Example~3.12]{BobkovTetali}. The corresponding Poincaré inequality gives the following bound.

\begin{lem}
	\label{lem: var_sym_group}
	There exists an absolute constant $C>0$ such that for any function $g:S_N\to \mathbb R$,
	\[ \Var_{\bp} (g(\bp)) \leq \frac{C}{N}\sum_{1 \le i, j \le N} \E_{\bp} (g(\bp) - g(\bp\circ(i\;j)))^2,\]
	 where  $\bp$ is uniformly distributed on $S_N$.
\end{lem}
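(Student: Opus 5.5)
We derive the bound from the Poincaré inequality for the random-transposition walk on $S_N$. The spectral gap of that walk is stated just above as $2/(N-1)$.

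\emph{The chain and its Dirichlet form.} Consider the Markov chain on $S_N$ that, from $\pi$, picks an ordered pair $(i,j)$ uniformly from $\{1,\dots,N\}^2$ and moves to $\pi\circ(i\;j)$. When $i=j$ the step is the identity, so this is a lazy version of the uniform random-transposition walk. The chain is reversible with respect to the uniform measure, since transpositions are involutions and the proposal is symmetric in $(i,j)$. Its Dirichlet form is
\[
\mathcal E(g,g)=\frac{1}{2N^2}\sum_{i,j}\E_{\bp}\bigl(g(\bp)-g(\bp\circ(i\;j))\bigr)^2 .
\]
Its spectral gap $\lambda$ is comparable to the gap $2/(N-1)$ of the walk that picks a uniformly random transposition. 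There are $\binom N2$ transpositions, and each is selected with probability $2/N^2$ per step. So the generators differ by a factor $\frac{N(N-1)}{N^2}$, which is bounded above and below by absolute constants, and hence $\lambda\ge c/N$ for an absolute $c>0$.

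\emph{Applying Poincaré.} The Poincaré inequality $\Var(g)\le \lambda^{-1}\mathcal E(g,g)$ then gives
\[
\Var_{\bp}(g(\bp))\le \frac{N}{c}\cdot\frac{1}{2N^2}\sum_{i,j}\E_{\bp}\bigl(g(\bp)-g(\bp\circ(i\;j))\bigr)^2,
\]
which is the claimed bound with $C=1/(2c)$. Right versus left multiplication makes no difference: $\bp\circ(i\;j)$ is again uniform, and the Cayley graph generated by the conjugation-invariant set of transpositions has the same spectrum whether one multiplies on the left or on the right.

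\emph{Main obstacle.} The only nontrivial input is the spectral gap value $2/(N-1)$ of Diaconis--Shahshahani, which we take as given from \cite{DS}. It can also be seen from the representation theory of $S_N$: the eigenvalue of the transposition class sum on the irreducible representation indexed by the partition $(N-1,1)$ is $\frac{N-3}{N-1}$ after normalization, and this is the second-largest eigenvalue. The remaining work is bookkeeping of constants, together with checking that the lazy diagonal terms $i=j$ only change $\lambda$ by a bounded factor.
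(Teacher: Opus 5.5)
Your proposal is correct and follows the same route as the paper, which simply invokes the Poincaré inequality for the random-transposition walk with spectral gap $2/(N-1)$ from Diaconis--Shahshahani. Your bookkeeping is also right: the ordered-pair chain has generator exactly $\frac{N-1}{N}$ times that of the uniform-transposition walk, so its gap is $2/N$ and one may take $C=1/4$.
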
 
We now deduce Lemma~\ref{cor: conf_var_bound} from Lemma~\ref{lem: var_sym_group}.

\begin{proof}[Proof of Lemma \ref{cor: conf_var_bound}]
    We will abuse the notation a little by writing $\mathcal{P}_\mathbb{H} \subseteq S_{\mathbb{H}}$, where $S_\mathbb{H}$ is the permutation group of all symbols in $\mathbb{H}$. Define $\psi: S_\mathbb{H} \to \mathcal{P}_\mathbb{H}$ by
    \[ \psi(\pi) = (\pi(h_1) \; \pi(h_2))\circ \cdots \circ (\pi(h_{N-1})\;\pi(h_N)). \]
    In other words, $\psi(\pi)$ corresponds to the pairing $\{\pi(h_1), \pi(h_2)\}, \ldots, \{\pi(h_{N-1}), \pi(h_N)\}$. Note that
    \[ |\psi^{-1}(\sigma)| = 2^{\frac{N}{2}} \Big(\frac{N}{2}\Big)!\quad \text{for all $\sigma \in \mathcal{P}_\mathbb{H}$.} \]
    Hence, if $\bp \sim \mathrm{Unif}(S_\mathbb{H})$, then $\psi(\bp) \sim \mathrm{Unif}(\mathcal{P}_\mathbb{H})$.

   Let $g:\mathcal{P}_\mathbb{H} \to \mathbb{R}$ be any function. Applying Lemma~\ref{lem: var_sym_group} to the function $g \circ \psi:S_\mathbb{H} \to \mathbb{R}$, we obtain 
    \begin{equation} \label{eq:vbd_1}
         \Var (g(\bs)) \leq \frac{C}{N} \sum_{1 \le i, j \le N} \E_{\bp} \big(g(\psi(\bp)) - g(\psi(\bp \circ (h_i\; h_j)))\big)^2,
   \end{equation}
    where $\bs := \psi(\bp) \sim \mathrm{Unif}(\mathcal{P}_\mathbb{H})$.

    Fix $i \ne j$ and $\pi \in S_{\mathbb{H}}$. So, $\pi(h_i) \ne \pi(h_j)$. Let $\sigma := \psi(\pi)$ and $\hat \sigma := \psi(\pi\circ (h_i\; h_j))$. 
    If $\pi(h_i)=\sigma(\pi(h_j))$, then $\hat\sigma=\sigma$, since
    $\pi(h_i)$ and $\pi(h_j)$ are already paired in $\sigma$.
    In this case,
    \[  \hat\sigma=\sw^{\pi(h_i),\sigma(\pi(h_j))}(\sigma),
    \]
    because the two arguments of the switch coincide. Assume otherwise, that is, $\pi(h_i) \neq \sigma(\pi(h_j))$.
    Then in $\sigma$, we have the pairs $ \{\pi(h_i), \sigma(\pi(h_i))\} \text{ and } \{\pi(h_j), \sigma(\pi(h_j))\},$
    whereas $\hat \sigma$ has the pairs
    \[  \{\pi(h_j), \sigma(\pi(h_i))\} \text{ and } \{\pi(h_i), \sigma(\pi(h_j))\}.
    \]
    Consequently, if $\pi(h_i) \neq \sigma(\pi(h_j))$, then $ \hat \sigma = \sw^{\pi(h_i), \sigma(\pi(h_j))}(\sigma).$
    For $i = j$, we trivially have $\hat \sigma = \sw^{\pi(h_i), \sigma(\pi(h_j))}(\sigma) = \sigma$. Therefore, in all cases $1 \le i, j \le N,$
\[  \hat \sigma = \sw^{\pi(h_i), \sigma(\pi(h_j))}(\sigma). \]
    Since $\pi$ is a bijection on $\mathbb{H}$, we have 
    \begin{align*}
        \sum_{i, j} \big(g(\psi(\pi)) - g(\psi(\pi \circ (h_i\; h_j)))\big)^2 
        &=\sum_{i, j} \big(g(\sigma) - g(\sw^{\pi(h_i), \sigma(\pi(h_j))}(\sigma))\big)^2\\
        &=\sum_{x, y}\big(g(\sigma) - g(\sw^{h_x, \sigma(h_y)}(\sigma))\big)^2.
    \end{align*}
    Combined with \eqref{eq:vbd_1}, this yields 
    \begin{equation}
    \label{eq: efron_stein_pairing}
    \Var (g(\bs)) \leq \frac{C}{N}\sum_{x, y}\E \big(g(\bs) - g(\sw^{h_x, \bs(h_y)}(\bs))\big)^2.
    \end{equation}
    Finally, we would like to recast \eqref{eq: efron_stein_pairing} as a variance bound for functions on multigraphs instead of pairings. Let $f$ be a real-valued function defined on multigraphs, and take $g = f\circ \chi$. 

    Take $1 \le x, y \le N$ and $\sigma \in \mathcal{P}_{\mathbb{H}}$.
    Let $\sigma' := \sw^{h_x, \sigma(h_y)}(\sigma)$. Suppose that $x \ne y$ and $h_x \ne \sigma(h_y)$. Consider the following `path' from $\sigma$ to $\sigma'$. From $\sigma$, remove the pairs $\{ h_x, \sigma(h_x)\}$ and $\{ h_y, \sigma(h_y)\}$ in succession. Then add back the pairs $\{ h_y, \sigma(h_x)\}$ and $\{ h_x, \sigma(h_y) \}$ in succession to obtain $\sigma'$. Adding these two pairs is the reversal of deleting the pairs $\{ h_x, \sigma'(h_x)\}$ and $\{ h_y, \sigma'(h_y)\}$ from $\sigma'$ in that order. These operations can also be performed on the space of multigraphs via the map $\chi$ as follows:
    \begin{align*}
    G &\rightarrow G \setminus  e_\sigma(h_x, \sigma(h_x))    \rightarrow G \setminus \big \{ e_\sigma(h_x, \sigma(h_x)), e_\sigma(h_y, \sigma(h_y))\big \} \\
    &= G' \setminus \big \{ e_{\sigma'}(h_x, \sigma'(h_x)), e_{\sigma'}(h_y, \sigma'(h_y))\big \} \rightarrow G' \setminus  e_{\sigma'}(h_x, \sigma'(h_x)) \rightarrow G',
    \end{align*} 
    where $G:=\chi(\sigma)$, $G':=\chi(\sigma')$, and for $\tau\in\{\sigma,\sigma'\}$,
    the symbol $e_\tau(a,\tau(a))$ denotes the unique edge copy in
    $\cE(\chi(\tau))$ corresponding to the paired half-edges $\{a,\tau(a)\}$.

    For notational brevity, let us denote $e_\tau(x) = e_\tau(h_x, \tau(h_x))$. Then
    \begin{align*}
     (f(G) - f(G'))^2   &\leq  4 \Big[ \big(f(G) - f( G\setminus e_\sigma(x) ) \big)^2 + \big (f(G\setminus e_\sigma(x) ) - f(G\setminus\{e_\sigma(x), e_\sigma(y)\})\big )^2\\
    &\quad + \big(f(G') - f( G'\setminus e_{\sigma'}(x)) \big)^2 + \big (f(G'\setminus e_{\sigma'}(x)) - f(G'\setminus\{e_{\sigma'}(x), e_{\sigma'}(y)\})\big )^2 \Big].
        \end{align*}
    This inequality is also true when $x = y$ or $h_x =  \sigma(h_y)$, since in these cases $\sigma' = \sigma$ and hence $G' = G$, so the left-hand side is zero.  
    Recall that when $\bs \sim \mathrm{Unif}(\mathcal{P}_\mathbb{H})$, then so is $\bs^{x, y} := \sw^{h_x, \bs(h_y)} (\bs)$ for any $x, y$.  
    Write $\bG = \chi(\bs)$ and $\bG^{x,y} = \chi(\bs^{x, y})$. Then for any $x,y$, we have 
    \begin{align*}
     \E \big(f(\bG^{x, y}) - f( \bG^{x, y} \setminus e_{\bs^{x, y}}(x) ) \big)^2   &=  \E \big(f(\bG) - f( \bG \setminus e_{\bs}(x) ) \big)^2.
    \end{align*}
    Similarly, for any $x, y$, 
    \begin{align*}
    \E \big( f( \bG^{x, y} \setminus e_{\bs^{x, y}}(x) ) &-  f( \bG^{x, y} \setminus\{e_{\bs^{x, y}}(x), e_{\bs^{x, y}}(y)\}) \big)^2 \\
    &=  \E \big(f( \bG \setminus e_{\bs}(x) ) -  f( \bG \setminus\{e_{\bs}(x),e_{\bs}(y) \}) \big)^2.
    \end{align*}
    The above discussion coupled with \eqref{eq: efron_stein_pairing} yields 
    \begin{align*}
        \Var (f(\bG))
        &\le \frac{C'}{N} \sum_{x, y} \E \Big[ \big( f(\bG) -  f( \bG \setminus e_{\bs}(x) ) \big)^2 \\
        &\qquad\qquad\qquad\qquad\qquad\quad + \big(f( \bG \setminus e_{\bs}(x) ) -  f( \bG \setminus\{e_{\bs}(x),e_{\bs}(y) \}) \big)^2 \Big].
    \end{align*}
    Now, for each realization of $\bs$, every edge copy $e\in\cE(\bG)$ is equal to $e_{\bs}(x)$ for exactly two values of $x\in[N]:=\{1, 2, \ldots, N\}$. Hence
    \[ \sum_{x=1}^N \big( f(\bG) -  f( \bG \setminus e_{\bs}(x)) \big)^2
    = 2\sum_{e\in\cE(\bG)} \big(f(\bG)-f(\bG\setminus e )\big)^2.  \]
    Likewise, each ordered pair $(e,e')\in \cE(\bG)\times\cE(\bG)$ is represented by exactly four ordered pairs $(x,y)\in[N]^2$ such that
    $e_{\bs}(x)=e$ and $e_{\bs}(y)=e'$. Therefore,
    \[ \sum_{x,y}
    \big(f( \bG \setminus e_{\bs}(x)) -  f( \bG \setminus\{e_{\bs}(x),e_{\bs}(y) \}) \big)^2
    = 4\sum_{e,e'\in\cE(\bG)} \big(f(\bG\setminus e)-f(\bG\setminus\{e,e'\})\big)^2. \]
    Since $\bG$ has the same distribution as $\C_n$, the theorem follows after adjusting the constant.
\end{proof}

\subsection{Variance bounds on the weight of the maximum matching}
In this subsection, we gather several variance bounds that will be used in the proof of Theorem~\ref{thm: annealed_CLT}. 

We will first show that for a simple bounded-degree graph $G$,  $\Var(\W_G)$ is comparable to $|E(G)|$. 
\begin{lem}
	\label{lem: variance_lower_bound}
	There exists a constant $c>0$ depending only on $D$ such that for any graph $G \in \mathcal{G}_{n, D}$, 
    \[ c|E(G)| \le  \Var_w (\W_{G}) \le |E(G)|. \]
\end{lem}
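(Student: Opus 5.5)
The upper bound will follow from the Efron--Stein inequality in the edge weights. Let $\widetilde w_e$ be an independent copy of $w_e$, and let $\W_G^{\{e\}}$ be the optimal weight after resampling the weight of $e$. Efron--Stein gives
\[\Var_w(\W_G)\le \tfrac12\sum_{e\in E(G)}\E_w\bigl(\W_G-\W_G^{\{e\}}\bigr)^2 .\]
Changing a single weight changes the optimum by at most the change in that weight, so $|\W_G-\W_G^{\{e\}}|\le |w_e-\widetilde w_e|$. For i.i.d.\ $\mathrm{Exp}(1)$ variables, $\E_w(w_e-\widetilde w_e)^2=2$, so each term is at most $1$ and the sum gives $\Var_w(\W_G)\le |E(G)|$. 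A slightly sharper bound comes from observing that $\W_G$ is a function of $w_e$ of the form $\mathrm{const}+(w_e-\B_e)_+$ with $\B_e$ independent of $w_e$. Conditioning on the other weights then gives $\Var(\,\cdot\mid \text{others})\le \Var(w_e)=1$, but the crude bound already suffices.

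For the lower bound, I would first find a large family of edges whose contributions can be made nearly independent. Since the maximum degree is at most $D$, greedily choose a set $S\subseteq E(G)$ of edges that are pairwise at line-graph distance at least $3$; we can take $|S|\ge |E(G)|/D^{4}$, say. Let $\mathcal F$ be the $\sigma$-field generated by all weights $w_f$ with $f\notin S$. The law of total variance gives
\[\Var_w(\W_G)\ge \E_w\Var_w(\W_G\mid\mathcal F).\]
Conditionally on $\mathcal F$, the weights $(w_e)_{e\in S}$ are i.i.d.\ $\mathrm{Exp}(1)$. The main step is to show that $\Var_w(\W_G\mid\mathcal F)\ge c|S|$ on an event of $\mathcal F$ of probability bounded below. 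For this I would use the conditional Efron--Stein lower bound (the Hoeffding decomposition of the first-order terms):
\[\Var(F)\ge\sum_{e\in S}\Var\bigl(\E[F\mid w_e,\mathcal F]\mid\mathcal F\bigr).\]
It then suffices to show that each single-coordinate projection has variance at least $c$ on a good event.

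Here the bonus representation enters. As a function of $w_e$, with all other weights fixed, we have $\W_G=\W_{G\setminus e}+(w_e-\B_e)_+$, where $\B_e=\B(v,G\setminus u)+\B(u,G\setminus e)$ by \eqref{eq:edge_bonus_vertex_bonus} and \eqref{eq:edge_MWM_rep}. The obstacle is that $\B_e$ depends on the other weights in $S$, so the projection averages over them. I would handle this by working on the event that every edge $f$ adjacent to an endpoint of $e$ has $w_f\le\epsilon$. This event has probability at least $(1-e^{-\epsilon})^{2D}$ and is $\mathcal F$-measurable, since distinct edges of $S$ are at distance at least $3$ and so adjacent edges are not in $S$. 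By the cavity recursion \eqref{eq:bonus_cavity_recursion}, on this event $\B_e\le 2\epsilon$, uniformly in the values of the other $S$-weights. Then $\E[\W_G\mid w_e,\mathcal F]=g(w_e)$, where $g$ is nondecreasing, $1$-Lipschitz, and has slope exactly $1$ for $w_e\ge2\epsilon$, because both $\W_{G\setminus e}$ and the relevant quantities do not depend on $w_e$. A function with slope $1$ on $[2\epsilon,\infty)$ that is nondecreasing elsewhere has variance under $\mathrm{Exp}(1)$ at least the variance of $(X-2\epsilon)_+$. This is bounded below by an absolute constant for $\epsilon=1/4$; one can see this by writing $g(x)=(x-2\epsilon)_++h(x)$ with $h$ nondecreasing and constant beyond $2\epsilon$, and noting that the covariance of the two monotone pieces is nonnegative by Harris/FKG.

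Summing gives $\E_w\Var_w(\W_G\mid\mathcal F)\ge \sum_{e\in S} c\,\prob(\text{good}_e)\ge c'|S|\ge c''|E(G)|$. There is one subtlety: the good events $\text{good}_e$ are defined edge by edge, so I take the expectation termwise rather than on a single joint event. This works because the projection bound for $e$ needs only $\text{good}_e$.
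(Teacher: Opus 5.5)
Your proof is correct. The upper bound is exactly the paper's Efron--Stein argument, but your lower bound takes a genuinely different route.

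The paper does not localize at all. It invokes Chatterjee's general method for fluctuation lower bounds \cite{Chatterjee19} and rescales \emph{all} weights by $1/(1-\varepsilon)$ with $\varepsilon=\alpha|E(G)|^{-1/2}$, which shifts $\W_G$ deterministically by $\W_G\,\varepsilon/(1-\varepsilon)$. It then uses the fact that $\W_G$ dominates the weight of a fixed maximal matching with at least $|E(G)|/(2D)$ edges, together with a Chernoff bound, to show this shift is of order $\sqrt{|E(G)|}$. The output is an anticoncentration estimate, $\prob_w(a\le\W_G\le b)\le 7/8$ whenever $b-a\le c_1\sqrt{|E(G)|}$. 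This is stronger than a variance bound, but it rests on an external perturbation lemma and a global rescaling of the weight law.

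Your argument is local instead. You combine the first-order Hoeffding projection bound over a separated edge set $S$ with the exact representation $\W_G=\W_{G\setminus e}+(w_e-\B_e)_+$. You then use the $\mathcal F$-measurable event that all edges adjacent to $e$ are light, which forces $\B_e\le 2\epsilon$ uniformly in the remaining $S$-weights, via the cavity recursion and nonnegativity of bonuses. The monotone decomposition and Chebyshev's association inequality then give each projection conditional variance at least $2e^{-2\epsilon}-e^{-4\epsilon}$.

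This route is elementary, self-contained, and yields an explicit constant depending on $D$. It also uses almost nothing about the exponential law: it needs only that $\prob(w\le\epsilon)$ and $\prob(w>2\epsilon)$ are both positive for some $\epsilon$, plus a finite second moment. What it does not give is the small-ball statement, though the lemma only asks for the variance.

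Two minor remarks. Pairwise line-graph distance $2$ already suffices for $\mathrm{good}_e$ to be $\mathcal F$-measurable. Your ``slightly sharper'' conditional-variance version of the upper bound gives the same constant $|E(G)|$, not a better one.
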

\begin{proof}
 The upper bound follows easily from the Efron-Stein inequality. Indeed, for each $e \in E(G)$, let $\W_G^{\{e\}}$ be the maximum matching weight after replacing $w_e$ by an independent copy $\widetilde w_e$. By the Efron-Stein inequality,
	\[ \Var_w(\W_G) \le \frac12 \sum_{e \in E(G)} \E_w \bigl(\W_G-\W_G^{\{e\}}\bigr)^2.\]
	Replacing  $w_e$ by $\widetilde w_e$ changes the weight of any matching by at most $|w_e-\widetilde w_e|$, so $|\W_G- \W_G^{\{e\}}| \le |w_e- \widetilde w_e|$. Hence
	\begin{equation}
    \label{eq: Efron-Stein}
	\Var_w(\W_G) \le \frac12 \sum_{e \in E(G)} \E_w (w_e-\widetilde w_e)^2 = |E(G)|.
	\end{equation}
 
 The proof of the lower bound is similar to that of \cite[Lemma~3.2]{Cao}, which in turn is based on an idea in \cite{Chatterjee19}. We include a proof for the convenience of the reader.

 For the lower bound, it suffices to find constants $c_1,c_2>0$ such that if $|E(G)|$ is sufficiently large, then for all $a<b$ with $b-a\leq c_1\sqrt{|E(G)|}$, one has $\prob_w(a\leq \W_{G}\leq b)\leq 1-c_2$.
	
	Fix $\alpha>0$, and set $\varepsilon = \alpha |E(G)|^{-1/2}$. For $e\in E(G)$, define $\widehat{w}_e = w_e/(1-\varepsilon)$. Let $\widehat{\W}_{G}$ be the corresponding total weight of the MWM using the weights $(\widehat{w}_e)$. By \cite[Lemma~1.2]{Chatterjee19} and \cite[Corollary~1.8]{Chatterjee19}, there exists a constant $C_1>0$ such that for $|E(G)|$ sufficiently large,
	\begin{align}
    \label{eq: fluc_W_G}
	\prob_w(a\leq \W_{G}\leq b) \leq & \frac{1}{2}\big(1+\prob_w(|\W_{G} - \widehat{\W}_{G}| \leq b-a) + C_1\alpha \big).
	\end{align}
	Now, choose $\alpha>0$ sufficiently small such that $C_1\alpha \leq 1/2$.
	On the other hand, since $\widehat{\W}_{G} = \W_{G}/(1-\varepsilon)$, one has $|\W_{G} - \widehat{\W}_{G}| = \W_{G}\cdot \frac{\varepsilon}{1-\varepsilon}$. We can always upper bound the weight of any fixed maximal matching in $G$ (a maximal matching is a matching in $G$ such that it is not a proper subset of any matching in $G$) by $\W_G$. Such a matching has size at least $(2D)^{-1}|E(G)|$. Together with a Chernoff bound, one can show that whenever $c_1$ is sufficiently small and $a<b$ satisfy $b-a \leq c_1\sqrt{|E(G)|}$,
	\begin{equation}
    \label{eq: var_lower_bound_intermediate}
	\prob_w\Big(\W_{G}\cdot \frac{\varepsilon}{1-\varepsilon} \leq b-a\Big) \leq \frac{1}{4}.
	\end{equation}
	Combining \eqref{eq: fluc_W_G}, \eqref{eq: var_lower_bound_intermediate} (and recalling that $C_1\alpha \leq 1/2$), for $c_1$ sufficiently small and for all $a<b$ with $b-a\leq c_1\sqrt{|E(G)|}$, $\prob_w(a\leq \W_{G} \leq b) \leq 7/8$. This proves Lemma~\ref{lem: variance_lower_bound}.
\end{proof}
We will need several variance estimates for the MWM weight, depending on which sources of randomness are being averaged over: the edge weights alone, the random graph alone, or both the graph and the weights together.
\begin{lem}
	\label{lem: var_bounds}
	There exist constants $c, C>0$ depending only on $D$ such that the following statements hold for all $n \ge 1$.
	\begin{enumerate}
		\item[(a)] For any $G \in \mathcal{MG}_{n,D}$, $\Var_w(\W_G) \le Cn$.
		\item[(b)] $cn \le \E_{\C_n}\Var_w(\W_{\C_n}) \le Cn$.
		\item[(c)] $\Var_{\C_n}(\E_w \W_{\C_n}) \le Cn$.
		\item[(d)] $cn \le \Var(\W_{\C_n}) \le Cn$.
	\end{enumerate}
\end{lem}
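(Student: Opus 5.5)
We prove the four parts in the order (a), (c), (b), (d). Part (d) follows from the others by the law of total variance.

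\emph{Part (a).} We repeat the Efron--Stein argument from \eqref{eq: Efron-Stein}, now on multigraphs. Resampling one edge weight changes $\W_G$ by at most $|w_e-\widetilde w_e|$. This holds for loops and parallel copies as well, since a matching is still a set of edge copies and a loop is never used. Hence $\Var_w(\W_G)\le |\cE(G)|\le Dn/2$.

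\emph{Part (c).} We apply Lemma~\ref{cor: conf_var_bound} to $f(G):=\E_w\W_G$. Deleting a single edge copy $e$ gives
\[
0\le \W_G-\W_{G\setminus e}\le w_e ,
\]
so that $|f(G)-f(G\setminus e)|\le \E_w w_e=1$. Similarly, $|f(G\setminus e)-f(G\setminus\{e,e'\})|\le 1$. The first sum in Lemma~\ref{cor: conf_var_bound} is therefore at most $|\cE(\C_n)|=N/2\le Dn/2$. The second sum is at most $|\cE(\C_n)|^2/N=N/4\le Dn/4$. Thus $\Var_{\C_n}(\E_w\W_{\C_n})\le Cn$.

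\emph{Part (b).} The upper bound is immediate from (a). The lower bound is the main step. Lemma~\ref{lem: variance_lower_bound} is stated only for simple graphs, so we pass to one. Given a multigraph $G\in\mathcal{MG}_{n,D}$, let $S(G)$ be the simple graph obtained by deleting all loops and all edges in parallel classes of size at least $2$. Then $S(G)$ has at least $|\cE(G)|-2L(G)$ edges, where $L(G)$ is the number of loop and multi-edge copies.

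Let $\mathcal F$ be the $\sigma$-field generated by the weights on the deleted copies $\cE(G)\setminus E(S(G))$. By conditional variance,
\[
\Var_w(\W_G)\ge \E_w\Var_w(\W_G\mid \mathcal F).
\]
Conditionally on $\mathcal F$, we rerun the Chatterjee-type proof of Lemma~\ref{lem: variance_lower_bound} with the deleted weights frozen and only the weights on $E(S(G))$ rescaled by $1/(1-\varepsilon)$. The quantity to bound is the shift in $\W_G$, which is at least $\frac{\varepsilon}{1-\varepsilon}\W_{S(G)}$. As before, $\W_{S(G)}$ dominates the weight of a maximal matching of $S(G)$ of size at least $|E(S(G))|/(2D)$. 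Chatterjee's lemma applies to any function of i.i.d.\ exponentials, and the frozen weights act as fixed parameters. This gives
\[
\Var_w(\W_G)\ge c\,|E(S(G))|\ge c\bigl(N/2-2L(G)\bigr).
\]

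It then remains to control the defects. A loop or double edge appears with probability bounded away from zero, and one can show $\E_{\C_n}L(\C_n)=O(D^2)=O(1)$. Combined with $N\ge n$ from \eqref{eq: degree_seq_cond}, this gives $\E_{\C_n}\Var_w(\W_{\C_n})\ge cn/2-O(1)\ge c'n$ for $n$ large. Small $n$ are handled by adjusting constants, using $N\ge n$ and the fact that a single-edge component already contributes variance $1$.

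\emph{Part (d).} The law of total variance gives
\[
\Var(\W_{\C_n})=\E_{\C_n}\Var_w(\W_{\C_n})+\Var_{\C_n}(\E_w\W_{\C_n}).
\]
Parts (b) and (c) bound the right side between $cn$ and $Cn$.

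The main obstacle is the lower bound in (b): extending the Chatterjee anti-concentration argument to multigraphs with frozen defect weights. If this proves delicate, the fallback is simpler. Select edges of $S(G)$ whose endpoints have no other edges in $S(G)$ within a fixed neighborhood structure. More precisely, choose a set of edges of $S(G)$ at pairwise distance at least $3$, of linear size, and condition on all other weights. Each chosen edge $e$ then changes $\W_G$ through $(w_e-b_e)_+$, where $b_e$ is a conditionally fixed threshold. The conditional variance is then a sum of independent terms, each with variance $\Var((X-b)_+)$, which is bounded below when $b$ is bounded. Since $b_e\le$ a sum of at most $2D$ nearby weights, $b_e$ is bounded with positive probability, and this again yields a linear lower bound.
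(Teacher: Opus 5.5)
Your parts (a), (c) and (d) are the paper's arguments. The gap is in the lower bound of (b).

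Your step ``the shift in $\W_G$ is at least $\frac{\varepsilon}{1-\varepsilon}\W_{S(G)}$'' is false once $\M_G$ uses deleted copies. For $D\ge 3$, take the path $u$--$v$--$x$ with two parallel copies of $uv$, of weights $10$ and $0.1$, and $w_{(vx)}=1$. Then $\W_{S(G)}=1$, but rescaling $w_{(vx)}$ to $1/(1-\varepsilon)$ leaves the optimum at $10$, so the shift is $0$. Testing $\M_G$ under the rescaled weights only gives
\[
\widehat{\W}_G-\W_G\ \ge\ \frac{\varepsilon}{1-\varepsilon}\,w\bigl(\M_G\cap E(S(G))\bigr)\ \ge\ \frac{\varepsilon}{1-\varepsilon}\Bigl(\W_{S(G)}-\sum_{e\in\cE(G)\setminus E(S(G))}w_e\Bigr).
\]
So the frozen defect weights do enter. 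You must work on the $\mathcal F$-event that their sum is at most, say, $|E(S(G))|/(4D)$. Markov's inequality then gives a bound of the form $\Var_w(\W_G)\ge c|E(S(G))|-C(L(G)+1)$, which still suffices after averaging with $\E_{\C_n}L(\C_n)=O(1)$. The main route is therefore repairable, but not as written.

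Your fallback is wrong as stated. Conditional on the weights outside a distance-$3$ set $M^*$, $\W_G$ is in general not of the form $\mathrm{const}+\sum_{e\in M^*}(w_e-b_e)_+$ with fixed thresholds. The thresholds depend on the other chosen weights through long alternating paths. For example, on the path with consecutive edges $e_1,\dots,e_5$, with $w_{e_2}=w_{e_3}=w_{e_4}=1$ and $M^*=\{e_1,e_5\}$, one gets $\W_G=1+\max(w_{e_1}+w_{e_5},1)$, which is not additive.

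A correct elementary version uses the first-order Hoeffding bound $\Var_w(\W_G)\ge\sum_e\Var_w\bigl(\E_w[\W_G\mid w_e]\bigr)$. Here $\E_w[\W_G\mid w_e]=\mathrm{const}+g_e(w_e)$, where $g_e(x):=\E_w(x-\beta_e)_+$. The threshold $\beta_e\ge0$ is independent of $w_e$ and dominated by the sum of the largest weights of the other edges at the two endpoints. Hence $\Var_w(g_e(w_e))\ge c(D)>0$.

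None of this is needed, however. The paper restricts to the event that $\C_n$ is simple:
\[
\E_{\C_n}\Var_w(\W_{\C_n})\ \ge\ \E_{\C_n}\bigl[\Var_w(\W_{\C_n})\ind_{\{\C_n\text{ simple}\}}\bigr]\ \ge\ c\,\frac N2\,\prob(\C_n\text{ simple}).
\]
It applies Lemma~\ref{lem: variance_lower_bound} directly on that event and uses $\liminf_n\prob(\C_n\text{ simple})>0$ from \eqref{eq: g_n_simple}. The fact you quote, that a loop or double edge appears with probability bounded away from zero, is not the relevant one, and it fails e.g.\ when all $d_i=1$. What matters is that $\C_n$ is simple with probability bounded away from zero.
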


\begin{proof}
	\begin{enumerate}

        \item[(a)] Applying Efron--Stein to the weights of the edge copies and using
$|\W_G-\W_G^{\{e\}}|\le |w_e-\widetilde w_e|$, we obtain
\[\Var_w(\W_G) \le \frac12\sum_{e\in\mathcal E(G)}
\E_w(w_e-\widetilde w_e)^2 =|\mathcal E(G)| \le \frac{Dn}{2}.
\]
 \item[(b)] The upper bound follows immediately from (a). For the lower bound, by Lemma~\ref{lem: variance_lower_bound} and the fact that $N\geq c_0n$ for some constant $c_0>0$ (which follows from \eqref{eq: degree_seq_cond}), one has
        \[ \E_{\C_n} \Var_w(\W_{\C_n}) \geq \E_{\C_n} [\Var_w(\W_{\C_n}) \ind_{\{\C_n \text{ is simple}\}}] \geq cc_0n \prob_{\C_n}(\C_n \text{ is simple}).
        \]
        The desired lower bound then follows from \cite[Theorem~7.12]{vdHofstadvol1} (or \cite[Eq.~(1.3)]{Janson}), which implies that
		\begin{equation}
			\label{eq: g_n_simple}
		\liminf_{n\to\infty} \prob_{\C_n}(\text{$\C_n$ is simple})>0.
		\end{equation}

        \item[(c)] Apply Lemma~\ref{cor: conf_var_bound} to the function $f(G) := \E_w \W_G$. For any $G \in \mathcal{MG}_{n,D}$ and any $e,e' \in \cE(G)$,
		$0 \le f(G)-f(G\setminus e) \le \E_w w_e = 1$,
		and similarly
		$0 \le f(G\setminus e)-f(G\setminus\{e,e'\}) \le \E_w w_{e'} = 1$.
		Therefore Lemma~\ref{cor: conf_var_bound} yields
		\[ \Var_{\C_n}(\E_w \W_{\C_n})
		\le C_0 \E_{\C_n}|\cE(\C_n)| + \frac{C_0}{N}\E_{\C_n}|\cE(\C_n)|^2.
		\]
		Since $|\cE(\C_n)| = N/2 \le Dn/2$, the right-hand side is at most $Cn$ for a suitable constant $C.$

\item[(d)] This follows from (b), (c), and the variance decomposition
		\[ \Var(\W_{\C_n}) = \Var_{\C_n}(\E_w \W_{\C_n}) + \E_{\C_n}\Var_w(\W_{\C_n}). \qedhere
		\]
	\end{enumerate}
\end{proof}

We also need to compare the variance of a function on $\G_n$ with that on $\C_n$.
\begin{lem}
	\label{lem: var_cn_gn}
	Let $f_n: \mathcal{MG}_{n, D} \to \mathbb{R}$ be  a function. Then there exists a constant $C>0$ such that for all $n \ge 1$,
	\[
	\Var_{\G_n}(f_n(\G_n))\leq C\Var_{\C_n}(f_n(\C_n)).
	\]
\end{lem}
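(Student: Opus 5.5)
The plan is to use the fact that $\G_n$ has the law of $\C_n$ conditioned on the event $S_n:=\{\C_n \text{ is simple}\}$, together with the bound \eqref{eq: g_n_simple}, which gives $p_n:=\prob_{\C_n}(S_n)\ge p_*>0$ for all large $n$. For the finitely many small $n$ we have $p_n>0$ whenever a graphical degree sequence exists; indeed, each simple graph is realized by at least one pairing. So $p_*$ can be taken to be a positive lower bound valid for all $n$.

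The key step is the variational characterization of variance, $\Var(Z)=\min_{a\in\mathbb R}\E(Z-a)^2$. Let $m:=\E_{\C_n} f_n(\C_n)$. Then
\[
\Var_{\G_n}(f_n(\G_n)) \le \E_{\G_n}\big(f_n(\G_n)-m\big)^2 = \frac{\E_{\C_n}\big[(f_n(\C_n)-m)^2\ind_{S_n}\big]}{p_n} \le \frac{1}{p_*}\Var_{\C_n}(f_n(\C_n)).
\]
Taking $C:=1/p_*$ gives the claim. Note that $C$ does not depend on $f_n$, because the only input is the uniform lower bound on the probability of simplicity.

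There is essentially no obstacle here. The one point to check is that the lower bound $\liminf_n \prob_{\C_n}(S_n)>0$, taken from \cite[Theorem~7.12]{vdHofstadvol1} or \cite{Janson}, applies under \eqref{eq: degree_seq_cond} even though the degree distributions need not converge. This holds because bounded degrees with $N\ge n$ give $\prob(S_n)\ge \exp(-C(D))$ uniformly. If one prefers not to rely on that, one can pass to subsequences along which the empirical degree distribution converges and apply the standard result to each subsequence. A further minor point is that $N$ must be even so that $\C_n$ is defined; this holds because $\mathbf d^{(n)}$ is graphical.
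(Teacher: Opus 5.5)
Your proposal is correct and matches the paper's proof. The paper writes $\Var_{\C_n}(f_n(\C_n)) \ge \Var_{\C_n}(f_n(\C_n)\mid \C_n \text{ is simple})\,\prob(\C_n \text{ is simple})$, which is exactly the inequality you derive from $\Var(Z)=\min_a \E(Z-a)^2$ with $a=\E_{\C_n} f_n(\C_n)$, and then concludes using \eqref{eq: g_n_simple}. Your extra remarks are correct details the paper leaves implicit: that $\prob(\C_n \text{ is simple})>0$ for the finitely many small $n$, and that $C$ does not depend on $f_n$.
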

\begin{proof}
	Recall that $\prob(\G_n\in \cdot) = \prob(\C_n\in \cdot \mid \C_n \text{ is simple})$. Hence
	\begin{align*}
		\Var_{\C_n}(f_n(\C_n)) &\geq \Var_{\C_n}(f_n(\C_n)\mid \C_n \text{ is simple}) \prob(\C_n \text{ is simple})\\
		&=\Var_{\G_n}(f_n(\G_n)) \prob(\C_n \text{ is simple}).
	\end{align*}
	Lemma~\ref{lem: var_cn_gn} then follows from \eqref{eq: g_n_simple}.
\end{proof}

We next use correlation decay to show that the quenched variance of the MWM weight is stable under the deletion of an edge with a tree-like neighborhood.
\begin{lem}
    \label{lem: new_lemma}
    Let $G$ be a simple graph with $n$ vertices and maximum degree bounded above by $D$. Fix $\ell\geq 1$. There exist constants $C>0$ and $\gamma \in (0, 1)$ depending only on $D$ such that for any edge $e$ such that $\bB_e^\ell(G)$ is a tree, we have
	\[	\left|\Var_w(\W_{G}) - \Var_w(\W_{G\setminus e })\right| \le \gamma^{\ell} \sqrt{n} + CD^\ell.\]
\end{lem}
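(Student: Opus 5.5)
The plan is to write $\W_G=\W_{G\setminus e}+\B(e,G)$ with $\B(e,G)\in[0,w_e]$, so that
\[
\Var_w(\W_G)-\Var_w(\W_{G\setminus e})=\Var_w(\B(e,G))+2\Cov_w\big(\W_{G\setminus e},\B(e,G)\big).
\]
The first term is at most $\E_w w_e^2=2$, which is absorbed into $CD^\ell$. Everything then reduces to bounding the covariance. We may assume $\ell\ge 3$, since smaller $\ell$ is handled by the trivial bound $|\Cov|\le \sqrt{\Var_w(\W_{G\setminus e})}\sqrt{2}\le C\sqrt n$ together with Lemma~\ref{lem: var_bounds}(a).

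\textbf{Step 1 (localize the bonus).} Pick $r=\lfloor \ell/2\rfloor$, or more generally some $r\le \ell$ with $r$ comparable to $\ell$. By Lemma~\ref{lem: edge_bonus}(i), since $\bB_e^r(G)$ is a tree,
\[
\|\B(e,G)-\B(e,\bB_e^r(G))\|_{L^2_w}\le Ce^{-cr}.
\]
By Cauchy--Schwarz and Lemma~\ref{lem: var_bounds}(a), replacing $\B(e,G)$ by $\B_{\mathrm{loc}}:=\B(e,\bB_e^r(G))$ changes the covariance by at most $Ce^{-cr}\sqrt{Dn/2}$. This term has the form $\gamma^\ell\sqrt n$ after adjusting constants. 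Note that $\B_{\mathrm{loc}}$ depends only on the weights inside $\bB_e^r(G)$.

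\textbf{Step 2 (split $\W_{G\setminus e}$).} Next, I decompose $\W_{G\setminus e}$ into a near part and a far part. Let $F$ be the set of edges of $G\setminus e$ lying within $\bB_e^{r}(G)$, or at distance $\le r$ from it, say within distance $2r+1$ of $e$. Then $|F|\le CD^{2r+2}\le CD^{\ell+2}$ when $r\le \ell/2$. Let $\mathcal F$ denote the $\sigma$-field of the weights on $F$.

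Write
\[
\Cov_w(\W_{G\setminus e},\B_{\mathrm{loc}})=\Cov_w\big(\E_w[\W_{G\setminus e}\mid\mathcal F],\B_{\mathrm{loc}}\big),
\]
which holds because $\B_{\mathrm{loc}}$ is $\mathcal F$-measurable. By the Efron--Stein-type martingale bound, $\Var_w(\E_w[\W_{G\setminus e}\mid\mathcal F])\le \sum_{f\in F}\E_w(w_f-\widetilde w_f)^2\le 2|F|$. Cauchy--Schwarz then gives a bound of order $\sqrt{|F|}$, which is at most $CD^{\ell}$. This already suffices: $|\Cov|\le\sqrt{2|F|}\cdot\sqrt 2\le CD^{\ell}$.

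The mistake to avoid is using the full $\W_{G\setminus e}$ instead of its conditional expectation. So the key point is the identity $\Cov(X,Y)=\Cov(\E[X\mid\mathcal F],Y)$ for $\mathcal F$-measurable $Y$. Combined with the fact that conditioning on the weights of $k$ edges yields a variance of at most $\sum_{f}\E(w_f-\widetilde w_f)^2/2\cdot 2$ by Efron--Stein/Doob decomposition, using $|\Delta_f\W|\le|w_f-\widetilde w_f|$, this gives variance $\le |F|$.

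In fact $F$ need only be the edges of $\bB_e^r(G)$ other than $e$, so $|F|\le CD^{r+1}$.

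\textbf{Main obstacle.} The main point to check is that the $L^2$ localization in Step 1 is legitimate with a rate $e^{-c r}$ where $r$ is a fixed fraction of $\ell$. I would take $r=\ell$ directly, which is allowed since $\bB_e^\ell(G)$ is a tree. Then $|F|\le CD^{\ell+1}$, giving a covariance of $O(D^{(\ell+1)/2})\le CD^\ell$. The resulting bound is
\[
|\Var_w(\W_G)-\Var_w(\W_{G\setminus e})|\le 2+Ce^{-c\ell}\sqrt n+CD^{\ell},
\]
and choosing $\gamma=e^{-c}$ (after absorbing the constant $C$ by slightly enlarging $\gamma$ toward $1$ for large $\ell$ and using the trivial bound for small $\ell$) gives the lemma.
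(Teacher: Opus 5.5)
Your proof is correct and shares the paper's skeleton. You split off $\B(e,G)$, whose variance is at most $2$, and replace it by $\B(e,\bB_e^\ell(G))$ using Lemma~\ref{lem: edge_bonus}(i). The cost of that replacement, $Ce^{-c\ell}\sqrt n$, comes from Cauchy--Schwarz against $\sqrt{\Var_w}\le C\sqrt n$. You pair the bonus with $\W_{G\setminus e}$ where the paper uses $\W_G$; the two identities are algebraically equivalent.

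The step that genuinely differs is how you decouple the localized bonus from the global weight.

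\textbf{The paper's route.} It does graph surgery:
\begin{itemize}
\item it replaces $\W_G$ by $\W_{\bB_e^\ell(G)}+\W_{U_e^\ell}$, where $U_e^\ell=G\setminus V(\bB_e^\ell(G))$;
\item it controls the error by the sandwich $0\le \W_G-(\W_{\bB_e^\ell(G)}+\W_{U_e^\ell})\le\sum_{f\in S}w_f$ over the outer edge boundary $S$, at cost $O(D^\ell)$;
\item independence removes the $\W_{U_e^\ell}$ term, and the $\W_{\bB_e^\ell(G)}$ term is $O(D^{\ell/2})$.
\end{itemize}

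\textbf{Your route.} You project onto the $\sigma$-field $\mathcal F$ of the weights on $E(\bB_e^\ell(G))$. The covariance then equals $\Cov_w\bigl(\E_w[\W_{G\setminus e}\mid\mathcal F],\B(e,\bB_e^\ell(G))\bigr)$. The conditional expectation inherits the one-coordinate bound $|\W_{G\setminus e}-\W_{G\setminus e}^{\{f\}}|\le|w_f-\widetilde w_f|$, so Efron--Stein gives it variance at most $|F|=O(D^{\ell+1})$.

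Your route needs no decoupling inequality and no bookkeeping of the boundary $S$, and it gives the slightly better $O(D^{\ell/2})$ instead of $O(D^\ell)$. The paper's route is a more explicit, hands-on comparison of matchings, with a cruder error term.

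Two small points:
\begin{itemize}
\item Keep $e$ in $F$, since $\B(e,\bB_e^\ell(G))$ depends on $w_e$. This costs nothing, because $\W_{G\setminus e}$ does not depend on $w_e$.
\item Your closing remark is slightly off. For small $\ell$, the trivial bound $2+2\sqrt{Dn}$ cannot be dominated by $\gamma^\ell\sqrt n$ with coefficient one for any $\gamma<1$. What you actually prove is $C\gamma^\ell\sqrt n+CD^\ell$. The paper's proof gives the same, ending with $O((1-\delta)^{\ell/2}\sqrt n)+O(D^\ell)$. This is all that is used later, where $\ell=R\to\infty$ and only $o(\sqrt n)$ is needed.
\end{itemize}
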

\begin{proof}
Using $\W_{G} = \W_{G\setminus e} + \B(e, G)$, we can write 
	\[\Var_w(\W_G) - \Var_w(\W_{G\setminus e}) = 2\Cov_w(\B(e, G), \W_G) - \Var_w(\B(e, G)).
	\]
	Since $0\leq \B(e, G)\leq w_e$, one has $\Var_w(\B(e, G)) \le \E w_e^2 = 2$. Hence, it remains to bound $|\Cov_w(\B(e, G), \W_G)|$.

    Let $S$ be the  outer edge boundary of $\bB_e^\ell(G)$, equivalently $S$ is the set of edges of $G$ with exactly one endpoint in
$V(\bB_e^\ell(G))$. Note that $|S|$ and $ |V(\bB_e^\ell(G))|$ are both $O(D^\ell)$. By Lemma~\ref{lem: edge_bonus}, since $e$ has a tree neighborhood, we have
	\[\E_w(\B(e,G) - \B(e, \bB_e^\ell(G)))^2 = O((1-\delta)^\ell),\]
	uniformly in $G$ and $e$, where $\delta\in (0,1)$ depends only on $D$.

	We approximate $\W_G$ by $\W_{\bB_e^\ell(G)} + \W_{U_e^\ell}$, where $U_e^\ell:=G\setminus V(\bB_e^\ell(G)).$
	In other words, we approximate the MWM on $G$ by considering the MWM after setting all the edge weights on $S$ as $0$. Note that
	\[0\leq \W_G - (\W_{\bB_e^\ell(G)} + \W_{U_e^\ell}) \leq \sum_{f\in S} w_f.\]
	Therefore, uniformly in $G$ and $e$, one has
	\[ \E_w(\W_G - (\W_{\bB_e^\ell(G)} + \W_{U_e^\ell}))^2 \leq \E_w\Big(\sum_{f\in S} w_f\Big)^2 = O(D^{2\ell}).\]
	Hence Cauchy--Schwarz implies
	\begin{align*}
		&|\Cov_w(\B(e, G), \W_G) - \Cov_w(\B(e, \bB_e^\ell(G)), \W_{\bB_e^\ell(G)} + \W_{U_e^\ell})|\\
		\leq&\;  O((1-\delta)^{\ell/2})\sqrt{\Var_w(\W_G)} +\big | \Cov_w(\B(e, \bB_e^\ell(G)), \W_G - (\W_{\bB_e^\ell(G)} + \W_{U_e^\ell})) \big|\\
		\leq&\; O((1-\delta)^{\ell/2}\sqrt{n}) + O(D^\ell).
	\end{align*}
	On the other hand, by Lemma~\ref{lem: var_bounds}(a),
	\begin{align*}
		|\Cov_w(\B(e, \bB_e^\ell(G)), \W_{\bB_e^\ell(G)} + \W_{U_e^\ell})| &= |\Cov_w(\B(e, \bB_e^\ell(G)), \W_{\bB_e^\ell(G)})|\\
		&\leq \sqrt{\Var_w(\B(e, \bB_e^\ell(G)))\Var_w(\W_{\bB_e^\ell(G)})}
		=O(D^{\ell/2}).
	\end{align*}
    Combining, we have
    \[  |\Cov_w(\B(e, G), \W_G)| = O((1-\delta)^{\ell/2}\sqrt{n}) + O(D^\ell). \]
    This proves the lemma.
\end{proof}

\subsection{CLT for local graph statistics of configuration models}
A Gaussian CLT was established in \cite{BarbourRollin}   for the sum of functions indexed by vertices of the configuration model which only depend on a local neighborhood of the index vertices.  Before stating their result, we introduce some notations. 
Let $h$ be a non-negative function defined on the space of finite connected rooted multigraphs. Suppose that $h$ depends only on the $k$-neighborhood of the root. 
Define 
\begin{equation}\label{eq:local_stat_rep}
H_{n} = \sum_{v\in V(\C_n) } h(\bB_v^k(\C_n), v).   
\end{equation}
\begin{thm}[\cite{BarbourRollin}, Theorem~2.1, simplified]
	\label{thm: BarbourRollin}
	Let $\mathbf{d}$ be a degree sequence satisfying \eqref{eq: degree_seq_cond}, and let $h$ and $H_n$ be as above. Suppose that $k \leq a\log  n$ for some small constant $a>0$ depending on $D$ and $N\geq n$. Then there exists a constant $C>0$ depending on $D$ such that for $n$ sufficiently large,
	\[d_{\mathrm{W}}\Bigg(\mathcal{L} \Big ( \frac{H_{n} - \E_{\C_n} H_{n}}{\sqrt{\Var_{\C_n}(H_{n})}} \Big), \  \mathrm N(0, 1) \Bigg) \leq C D^{Ck} \Bigg(\frac{\|h\|_\infty^3  n}{\big(\Var_{\C_n}(H_{n})\big)^{3/2}} + \frac{\|h\|_\infty^2  n^{1/2}}{\Var_{\C_n}(H_{n})}\Bigg).
	\]
\end{thm}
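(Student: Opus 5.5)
This is a cited theorem, so my plan is to reduce it to Barbour and R\"ollin \cite{BarbourRollin}, Theorem~2.1, rather than reprove it. The approach is Stein's method via a size-biased or exchangeable-pair coupling built from switchings of the configuration model. I will take the pair $(\bs, \sw^{h_x,\bs(h_y)}(\bs))$ with $x,y$ uniform; by the Claim above it is exchangeable. Under it, $H_n$ changes only through vertices within distance $k$ of the four half-edges involved.

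Step 1 is the locality count. A switch alters at most $O(D^{k})$ rooted $k$-neighborhoods, and each summand satisfies $0\le h\le\|h\|_\infty$. So the increment $\Delta H := H_n' - H_n$ obeys $|\Delta H|\le CD^{k}\|h\|_\infty$. Moreover, the increments at two randomly chosen switches are dependent only when their $k$-neighborhoods overlap, which happens with probability $O(D^{Ck}/n)$.

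Step 2 is to verify an approximate linearity condition $\E[\Delta H\mid\bs]=-\lambda(H_n-\E H_n)+\text{error}$, with $\lambda\asymp 1/N$. This comes from averaging over $x,y$: the Efron--Stein structure in Lemma~\ref{cor: conf_var_bound} reflects the spectral gap $\asymp 1/N$ of the random-transposition walk. Barbour--R\"ollin avoid exact linearity by using a general Stein bound for exchangeable pairs without linear regression. There the Wasserstein distance is controlled by
\[
\frac{1}{\lambda\sigma^{3}}\E|\Delta H|^3 \quad\text{and}\quad \frac{1}{\lambda\sigma^{2}}\sqrt{\Var\big(\E[(\Delta H)^2\mid\bs]\big)},
\]
where $\sigma^2=\Var_{\C_n}(H_n)$.

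Step 3 plugs Step 1 into these terms. The first is at most $N\cdot D^{3k}\|h\|_\infty^3\cdot O(1)/\sigma^3$, which is $CD^{Ck}\|h\|_\infty^3 n/\sigma^3$. For the second, $\E[(\Delta H)^2\mid\bs]$ is an average of $N^2$ local terms. By Lemma~\ref{cor: conf_var_bound} and the dependence count of Step 1, its variance is $O(D^{Ck}\|h\|_\infty^4/n)$. This gives the bound $CD^{Ck}\|h\|_\infty^2 n^{1/2}/\sigma^2$.

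The main obstacle is Step 2, namely controlling the remainder in the regression of $\Delta H$ on $\bs$ without linear structure. This is exactly where the cited work does its heavy lifting. The condition $k\le a\log n$ ensures $D^{Ck}=o(n^{1/2})$ and keeps the short-cycle and overlap corrections negligible. I would simply cite \cite[Theorem~2.1]{BarbourRollin} for this step, checking only that their hypotheses (bounded degrees, $N\ge n$, $h$ depending on the $k$-ball) match ours.
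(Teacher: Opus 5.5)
The paper gives no proof of this statement: it quotes Barbour--R\"ollin's Theorem~2.1 in simplified form, so your plan of citing that theorem and checking that the hypotheses match is exactly what the paper does (degrees at most $D$, $N\ge n$ so that $N\asymp n$, $h$ determined by the $k$-ball, $k\le a\log n$). However, do not present Steps~1--3 as the mechanism behind the bound: for a general local statistic, $\E[\Delta H\mid\bs]$ is not $-\lambda(H_n-\E H_n)$ up to a negligible remainder, because $H_n$ is far from an eigenfunction of the switching chain and the remainder is generically as large as $\lambda(H_n-\E H_n)$, so that exchangeable-pair sketch would not by itself yield the stated estimate.
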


\section{Proof of Theorem~\ref{thm: annealed_CLT}}

\subsection{Reduction to Conditions I, II, and III}  \label{subsec:outline}
We begin by justifying that \ref{cond:quenched_clt}, \ref{cond:variance_conc}, and \ref{cond:quenched_mean_clt}  imply Theorem~\ref{thm: annealed_CLT}. Let $Z\sim \mathrm N(0,1)$ be independent of everything else, and define
\[ U_n :=\alpha_n \sqrt{Q_n^{\mathrm s}}X(\G_n)+\beta_n Y_n^{\mathrm s},
\qquad V_n:=\alpha_n \sqrt{Q_n^{\mathrm s}} Z+\beta_n Y_n^{\mathrm s}. \]
We claim that
\begin{equation}\label{UV_close}
d_{\mathrm{KS}}(\mathcal{L} (U_n), \mathcal{L}(V_n))=o(1),
\end{equation}
and
\begin{equation}\label{V_normal}
  V_n\stackrel{d}{\to} \mathrm N(0,1).
\end{equation}
Together, \eqref{UV_close} and \eqref{V_normal} immediately imply that
\[ U_n = \frac{\W_{\G_n} - \E \W_{\G_n}}{\sqrt{\Var(\W_{\G_n})}} \stackrel{d}{\to} \mathrm N(0,1), \]
which yields Theorem~\ref{thm: annealed_CLT}. \\

\noindent\textbf{Proof of \eqref{UV_close}.}  
For any $t\in\mathbb{R}$,
\[ \big|\prob(U_n\le t)-\prob(V_n\le t)\big| \le \E_{\G_n} \Big[\big|\prob(U_n\le t\mid \G_n)-\prob(V_n\le t\mid \G_n)\big|\Big]. \]
Taking the supremum over $t\in\mathbb{R}$, we obtain
\[ d_{\mathrm{KS}}(\mathcal{L} (U_n), \mathcal{L}(V_n))
\le \E_{\G_n} \big[ d_{\mathrm{KS}}\big(\mathcal{L}(U_n\mid \G_n),\mathcal{L}(V_n\mid \G_n)\big)\big].\]

Now condition on $\G_n$. Since $Q_n^{\mathrm s}$ and $Y_n^{\mathrm s}$ are functions only of $\G_n$, they become deterministic under this conditioning. Hence, for some constants
$c_n:=\alpha_n \sqrt{Q_n^{\mathrm s}} \ge 0$ and $b_n:=\beta_n Y_n^{\mathrm s},$
we have
\[ U_n\mid \G_n \;\stackrel{d}{=}\; c_n X(\G_n)+b_n, \qquad
V_n\mid \G_n \;\stackrel{d}{=}\; c_n Z+b_n.\]
Since Kolmogorov distance is invariant under a common shift and a common nonzero scaling, it follows that
\[ d_{\mathrm{KS}}\big(\mathcal{L}(U_n\mid \G_n),\mathcal{L}(V_n\mid \G_n)\big)
= d_{\mathrm{KS}}\big(\mathcal{L}(X(\G_n)\mid \G_n),\mathcal{L}(Z)\big) \]
on the event $\{c_n>0\}$. On the event $\{c_n=0\}$, both conditional laws are equal to the point mass at $b_n$, so the left-hand side is $0$. Thus, in all cases,
\[ d_{\mathrm{KS}}\big(\mathcal{L}(U_n\mid \G_n),\mathcal{L}(V_n\mid \G_n)\big)
\le d_{\mathrm{KS}}\big(\mathcal{L}(X(\G_n)\mid \G_n),\mathcal{L}(Z)\big).\]

Consequently,
\[ d_{\mathrm{KS}}(\mathcal{L} (U_n), \mathcal{L}(V_n))
\le \E_{\G_n}\big[ d_{\mathrm{KS}}\big(\mathcal{L}(X(\G_n)\mid \G_n),\mathcal{L}(Z)\big) \big]  = o(1), \]
 by Condition I. This proves the claim \eqref{UV_close}.

\noindent\textbf{Proof of \eqref{V_normal}.}  Since $0\le \alpha_n,\beta_n\le 1$, the sequence $(\alpha_n,\beta_n)$ is contained in the compact set $[0,1]^2$. Let $(n_k)$ be any subsequence. By compactness, there exists a further subsequence, which we denote again by $(n_k)$, and numbers $\alpha,\beta\in[0,1]$ such that
$\alpha_{n_k}\to \alpha$ and $\beta_{n_k}\to \beta$.
Passing to the limit in $\alpha_{n_k}^2+\beta_{n_k}^2=1$, we obtain
$\alpha^2+\beta^2=1$. Since $Q_n^{\mathrm s}\to 1$ in probability by Condition II, it follows that
$\alpha_{n_k} \sqrt{Q_{n_k}^{\mathrm s}}\to \alpha$ in probability. If $\beta>0$, by Condition III,
\[Y_{n_k}^{\mathrm s}\stackrel{d}{\to} \xi, \quad \text{ where} \ \   \xi \sim \mathrm N(0,1). \]
Hence, by Slutsky's theorem, and using that $Z$ is independent of everything else, we have
\[ \big(\alpha_{n_k} \sqrt{Q_{n_k}^{\mathrm s}},\, \beta_{n_k},\, Y_{n_k}^{\mathrm s},\, Z\big) \stackrel{d}{\to} (\alpha,\beta,\xi,Z), \]
where $\xi$ and $Z$ are independent standard normal random variables.
An application of the continuous mapping theorem then gives
\[ V_{n_k} = \alpha_{n_k} \sqrt{Q_{n_k}^{\mathrm s}} Z+\beta_{n_k} Y_{n_k}^{\mathrm s}
\stackrel{d}{\to} \alpha Z+\beta \xi \sim \mathrm N(0,\alpha^2+\beta^2)=\mathrm N(0,1). \]
If $\beta=0,$ then $\beta_{n_k} Y_{n_k}^{\mathrm s} \stackrel{p}{\to} 0$ since the sequence $Y_n^{\mathrm s} $ is tight. Therefore, by Slutsky, $V_{n_k} \stackrel{d}{\to}  \mathrm{N}(0, 1).$ Thus every subsequence of $(V_n)$ has a further subsequence converging in distribution to $\mathrm N(0,1)$, which proves \eqref{V_normal}. \\

Thus it remains to show that  Conditions I, II, and III hold.

\subsection{Verification of Conditions I, II, and III}\label{sec:verification}
We need the following elementary bound whose proof is omitted. 
\begin{lem}
	\label{lem: exp}
	Let $X_1, \ldots, X_m$ be $\mathrm{Exp}(1)$ random variables and set  $Y = \max (X_1,\ldots, X_m)$. Then for any $p \ge 1$, 
  $\prob\left(Y > 2\log{m}\right) \leq m^{-1}$ and  $\E Y^p \le (2\log{m})^p + p!$.
\end{lem}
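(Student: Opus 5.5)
The statement is Lemma~\ref{lem: exp}: for $Y=\max(X_1,\dots,X_m)$ with $X_i\sim\mathrm{Exp}(1)$, we need $\prob(Y>2\log m)\le m^{-1}$ and $\E Y^p\le(2\log m)^p+p!$ for $p\ge1$. No independence is assumed, so the plan is to use only a union bound over the marginals.

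For the tail bound, the union bound gives, for every $t\ge0$,
\[
\prob(Y>t)\le\sum_{i=1}^m\prob(X_i>t)=me^{-t}.
\]
At $t=2\log m$ this equals $m\cdot m^{-2}=m^{-1}$. For $m=1$ the event is $\{X_1>0\}$, which has probability $1=m^{-1}$, so the bound still holds.

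For the moment bound, set $a:=2\log m$ and split on the event $\{Y\le a\}$:
\[
\E Y^p\le a^p+\E\bigl[Y^p\ind_{\{Y>a\}}\bigr].
\]
The cleanest way to control the second term is the pointwise inequality $Y\le a+\max_i(X_i-a)_+$. This gives $Y^p\ind_{\{Y>a\}}\le(a+T)^p\ind_{\{T>0\}}$ with $T:=\max_i(X_i-a)_+$. The union bound again yields $\prob(T>s)\le me^{-a-s}=m^{-1}e^{-s}$, so $T$ is stochastically dominated by an $\mathrm{Exp}(1)$ variable, and in fact $\E T^p\le m^{-1}p!$.

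The main obstacle is the cross terms in $(a+T)^p$. Simply expanding $(a+T)^p$ produces terms $\binom pk a^{p-k}\E T^k$, which are not obviously absorbed into $a^p+p!$. Instead, integrate the tail directly:
\[
\E\bigl[Y^p\ind_{\{Y>a\}}\bigr]=a^p\prob(Y>a)+\int_a^\infty pt^{p-1}\prob(Y>t)\,dt\le a^pm^{-1}+m\int_a^\infty pt^{p-1}e^{-t}\,dt .
\]
Substituting $t=a+s$ gives
\[
me^{-a}\int_0^\infty p(a+s)^{p-1}e^{-s}\,ds=m^{-1}\,\E\bigl[p(a+S)^{p-1}\bigr],\qquad S\sim\mathrm{Exp}(1).
\]
This quantity is at most $m^{-1}\bigl(2^{p-1}pa^{p-1}+2^{p-1}p!\bigr)$. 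It remains to check that $m^{-1}a^p+m^{-1}2^{p-1}(pa^{p-1}+p!)$ is absorbed into the slack available in $a^p+p!$. That slack is generous once $m$ is large compared with $2^p$.

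For small $m$, in particular $m=1$ (where $a=0$ and $\E Y^p=p!$ exactly), the bound is checked by hand. If the constants refuse to fit cleanly for moderate $m$, a fallback is to prove the weaker $\E Y^p\le C_p\bigl((2\log m)^p+1\bigr)$, which is all the later applications need. In particular, the tail integral is the step where care with constants is required.
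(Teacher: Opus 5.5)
Your tail bound is correct, but the moment bound cannot be finished along your lines. As stated, it is false when $m$ is small relative to $p$, even for independent $X_i$, so the step ``for small $m$ the bound is checked by hand'' fails. Take $m=2$ with $X_1,X_2$ i.i.d. Then $\prob(Y>t)=2e^{-t}-e^{-2t}$, so $\E Y^p=p!\,(2-2^{-p})$. At $p=3$ this is $11.25$, while $(2\log 2)^3+3!\approx 8.66$. More generally, for fixed $m\ge 2$ and i.i.d.\ weights, $\E Y^p/p!\to m$ as $p\to\infty$, whereas $(2\log m)^p/p!\to 0$.

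Even where the inequality does hold, your bookkeeping cannot reach it. You spend the full $a^p$ on $\{Y\le a\}$, so everything else must fit under $p!$. But for i.i.d.\ weights and integer $p$, the tail $\int_a^\infty pt^{p-1}\prob(Y>t)\,dt$ alone is essentially $p!\,m^{-1}\sum_{k<p}a^k/k!$. This is of order $2^p p!/\sqrt p$ when $\log m\approx p$. For example, at $p=3$, $m=20$ (where the lemma is true) it is about $7.4>3!$. Your split $\E[Y^p\ind_{\{Y\le a\}}]\le a^p$ then adds a further $m^{-1}a^p$ on top. So the claim that ``the slack is generous once $m$ is large compared with $2^p$'' is also not right. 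To get the bound where it is true, you must use that $Y$ rarely exceeds $\log m$, not just $2\log m$.

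The right tool is stochastic domination. The union bound gives $\prob(Y>t)\le\min(1,me^{-t})=\prob(\log m+S>t)$ with $S\sim\mathrm{Exp}(1)$. So $Y$ is stochastically dominated by $\log m+S$, and since no independence is assumed, a suitable coupling makes them equal in law. Hence $\E Y^p\le\E(\log m+S)^p\le 2^{p-1}\bigl((\log m)^p+\Gamma(p+1)\bigr)$, which is your fallback.

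For $p=1,2$ this yields exactly the stated bound for every $m\ge 1$:
\begin{itemize}
\item $\E Y\le\log m+1$;
\item $\E Y^2\le(\log m)^2+2\log m+2\le(2\log m)^2+2$, where the last step uses $\log 2>2/3$.
\end{itemize}

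The paper omits the proof and only uses the lemma with $p=2$, for second moments of maxima of edge weights. So the correct repair is to restrict the statement to $p\in\{1,2\}$, or to state it as $\E Y^p\le C_p\bigl((\log m)^p+1\bigr)$. Your fallback is the right instinct, but the proposal as written does not, and cannot, establish the claim for all $p\ge 1$.
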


\noindent \textbf{Proof of Condition I.} Choose a sequence $1\ll \ell_n\ll \log n$, and for a multigraph $H$, let
$\Theta_k(H)$ denote the number of cycles of length at most $k$.
Since $\G_n$ has the law of $\C_n$ conditional on $\C_n$ being simple,
\eqref{eq: g_n_simple} implies that, for some constant $C>0$, $\E_{\G_n}\Theta_{\ell_n} (\G_n) \le C\E_{\C_n}\Theta_{\ell_n} (\C_n).$

A direct cycle-counting argument for the configuration model, as in the
calculation leading to \cite[Eq.~(2.14)]{BollobasBook}, gives
\[\E_{\C_n}\Theta_{\ell_n}(\C_n) \le
\sum_{k=1}^{\ell_n} \frac{1}{2k} \Big( \frac{\sum_{i=1}^n d_i(d_i-1)}   {2m -2\ell_n}\Big)^k\le C_D^{\ell_n}\]
for all sufficiently large $n$, where $m = |\cE(\C_n)|$ and $C_D$
depends only on $D$. Since $\ell_n=o(\log n)$, we have
$C_D^{\ell_n}=n^{o(1)}$. In particular, for all sufficiently large $n$, $\E_{\G_n} \Theta_{\ell_n}(\G_n)\le n^{1/8}$ and hence, by Markov's inequality,
\[ \prob_{\G_n}(\Theta_{\ell_n} (\G_n) >n^{1/4})\le n^{-1/8}. \]

Set $\mathcal{A}_n:=\{\Theta_{\ell_n}(\G_{n}) \le n^{1/4}\}.$
On $\mathcal{A}_n$, choose one edge from each cycle of length at most $\ell_n$
according to a fixed deterministic rule, let $F_n$ be the set of selected
edges, and define $\widehat{\G}_n:=\G_n\setminus F_n.$ Then $|E (\widehat{\G}_n) |\ge n/2- n^{1/4}$ and
$\operatorname{girth}(\widehat{\G}_n)>\ell_n$. In particular,
$\widehat{\G}_n\in\mathcal K_{n}^{\square \ell_n}$ for sufficiently large $n$ with $\rho=1/4$. Moreover,
\[0\le \W_{\G_n}-\W_{\widehat{\G}_n} \le\sum_{e\in F_n}w_e \le |F_n|\max_{e\in E(\G_n)} w_e.\]
Hence, by Lemma~\ref{lem: exp}, on $\mathcal{A}_n$,
\[\Var_w\bigl(\W_{\G_n}-\W_{\widehat{\G}_n}\bigr) \le C n^{1/2}(\log n)^2.\]
Recall that $\sigma^2_G = \Var_w(\W_{G})$. Since $|E(\G_n)|\ge n/2$, Lemma~\ref{lem: variance_lower_bound} gives
$\sigma_{\G_n}^2 \ge cn$. Consequently, uniformly on $\mathcal{A}_n$,
\begin{equation}
\label{eq:short_cycle_L2_error}
\frac{ \Var_w\bigl(\W_{\G_n}-\W_{\widehat{\G}_n}\bigr) }{ \Var_w(\W_{\G_n}) }
\le C n^{-1/2}(\log n)^2 =:q_n=o(1).
\end{equation}
Moreover,  
\[\Big|\frac{\sigma_{\widehat{\G}_n}}{\sigma_{\G_n}}-1\Big|\le
\frac{\sqrt{\Var_w(\W_{\G_n}-\W_{\widehat{\G}_n})}}{\sigma_{\G_n}}\le \sqrt{q_n}=o(1)\]
uniformly on $\mathcal{A}_n$. Here, we used the bound 
    \begin{equation}\label{eq:sd_triangle}
    \big| \sqrt{\Var(X)} - \sqrt{\Var(Y)} \big| \le \sqrt{\Var(X- Y)}
    \end{equation}
    for any random variables $X$ and $Y$ with finite second moments, which is a consequence of the triangle inequality.  Define $\widetilde X_n :=
X(\widehat{\G}_n) \sigma_{\widehat{\G}_n}/\sigma_{\G_n}.$
Then, conditionally on $\G_n$,
\[\E_w\big|X(\G_n)-\widetilde X_n\big|^2
=\frac{\Var_w(\W_{\G_n}-\W_{\widehat{\G}_n})}{\Var_w(\W_{\G_n})}\le q_n.\]
By \eqref{eq:CLT_locally_tree},
\[\delta_n:=\max_{G\in\mathcal K_{n}^{\square \ell_n}}
d_{\mathrm{KS}}\left(\mathcal L\bigl(X(G)\bigr),
\mathrm N(0,1)\right) \to 0.\]
Since
$\sigma_{\widehat{\G}_n}/\sigma_{\G_n}=1+o(1)$ uniformly on $\mathcal{A}_n$, it follows
that
\[d_{\mathrm{KS}}\big(\mathcal L\bigl(\widetilde X_n\mid\G_n\bigr), \mathrm N(0,1)\big)
\le \delta_n+o(1)\]
uniformly on $\mathcal{A}_n$.

For any random variables $U,V$ and any $\varepsilon>0$,
\[d_{\mathrm{KS}}\bigl(\mathcal L(U),\mathrm N(0,1)\bigr)\le
d_{\mathrm{KS}}\bigl(\mathcal L(V),\mathrm N(0,1)\bigr)
+ \prob(|U-V|>\varepsilon)+ \frac{\varepsilon}{\sqrt{2\pi}}.
\]
Applying this conditionally on $\G_n$ with
$U=X(\G_n)$, $V=\widetilde X_n$, and
$\varepsilon=q_n^{1/3}$, and using \eqref{eq:short_cycle_L2_error}, we obtain
\[
d_{\mathrm{KS}}\left( \mathcal L\bigl(X(\G_n)\mid\G_n\bigr), \mathrm N(0,1) \right)
\le \delta_n+o(1) \]
uniformly on $\mathcal{A}_n$. Since the Kolmogorov distance is at most $1$, by dominated convergence, 
\begin{align*}
&\E_{\G_n}\left[ d_{\mathrm{KS}}\left(
\mathcal L\bigl(X(\G_n)\mid\G_n\bigr), \mathrm N(0,1) \right) \right] \le
\prob_{\G_n}(\mathcal{A}_n^c)+\delta_n+o(1)=o(1).
\end{align*}
This proves Condition~I. \\

\noindent \textbf{Proof of Condition II.}  The proof of Condition II is deferred to Subsection~\ref{subsec:conc_quenched_var}, where we establish the following result, showing that the quenched variance $\Var_w(\W_{\G_n})$ concentrates around its mean.
\begin{prop}
	\label{lem: variance_conc}
	One has
	\[ Q_n^{\mathrm s}  \stackrel{p}{\rightarrow} 1 \quad \text{ as $n\to\infty$.} \]
\end{prop}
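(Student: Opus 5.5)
We plan to show that $\Var_{\G_n}\big(\Var_w(\W_{\G_n})\big)=o(n^2)$. Since $\E_{\G_n}\Var_w(\W_{\G_n})\ge cn$ (Lemma~\ref{lem: variance_lower_bound} together with $|E(\G_n)|\ge n/2$), Chebyshev's inequality then gives $Q_n^{\mathrm s}\to1$ in probability. As the outline indicates, we would argue on the configuration model and transfer the bound. By Lemma~\ref{lem: var_cn_gn}, applied to $f_n(G):=\Var_w(\W_G)$ with multigraphs handled as in Section~3, it suffices to prove
\[
\Var_{\C_n}\big(\Var_w(\W_{\C_n})\big)=o(n^2).
\]

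For this we apply the Efron--Stein bound of Lemma~\ref{cor: conf_var_bound} to $f(G)=\Var_w(\W_G)$. The second term needs no structure. Since $\W_G-\W_{G\setminus e}=\B(e,G)\in[0,w_e]$, the decomposition used in the proof of Lemma~\ref{lem: new_lemma} gives
\[
|f(G)-f(G\setminus e)|\le 2|\Cov_w(\B(e,G),\W_G)|+2\le C\sqrt n
\]
by Cauchy--Schwarz and Lemma~\ref{lem: var_bounds}(a). This holds for multigraphs as well, since loops never enter a matching and parallel copies behave the same way. Hence the second term is at most $\frac{C}{N}\cdot N^2\cdot n=O(n^2)$. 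This is not $o(n^2)$, so the crude bound must be refined for both terms. The tool is Lemma~\ref{lem: new_lemma}: if $\bB_e^{\ell}(G)$ is a tree, then $|f(G)-f(G\setminus e)|\le\gamma^\ell\sqrt n+CD^\ell$.

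We take $\ell=\ell_n\to\infty$ slowly, for instance $\ell_n=\lfloor \epsilon\log n/\log D\rfloor$ with $\epsilon$ small. Call an edge \emph{good} if its $\ell_n$-neighborhood is a tree, and \emph{bad} otherwise.
\begin{itemize}
\item \textbf{First term.} Good edges contribute at most $N(\gamma^{2\ell_n}n+CD^{2\ell_n})=o(n^2)$. Bad edges contribute at most $Cn\cdot\E\#\{\text{bad edges}\}$.
\item \textbf{Bad-edge count.} An edge is bad only if it lies within distance $\ell_n$ of a cycle (or loop or multi-edge) of length at most $2\ell_n+1$. By the cycle count in the proof of Condition~I, the expected number of such edges is $O(C_D^{\ell_n}D^{\ell_n})=n^{O(\epsilon)}=o(n)$.
\item \textbf{Second term.} For pairs $(e,e')$ with $e'$ good in $G\setminus e$, Lemma~\ref{lem: new_lemma} applies inside $G\setminus e$, which is a subgraph of $G$ with the same maximum degree and at most $n$ vertices. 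Deleting $e$ only removes edges, so a tree neighborhood stays a tree, and goodness of $e'$ in $G$ suffices. The remaining pairs number at most $N\cdot\#\{\text{bad }e'\}$ and contribute $\frac{C}{N}\cdot Nn\cdot\E\#\text{bad}=o(n^2)$.
\end{itemize}
Summing up gives $\Var_{\C_n}(f(\C_n))=o(n^2)$.

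One technical point is that Lemma~\ref{lem: new_lemma} is stated for simple graphs $G$. When $\C_n$ has loops or multi-edges far from $e$, we would either extend the lemma, whose proof only uses locality around $e$, or first delete the $O_p(1)$ loops and multi-edges; these perturb $f$ by $O(\sqrt n\log n)$ in a controlled way.

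We expect the main obstacle to be making the Efron--Stein terms genuinely $o(n^2)$: a uniform treatment of the bad edges, including the multigraph case, and verifying that the $\sqrt n$-size increments occur on only $o(n)$ edges in expectation. Carrying out the refinement above is where we would concentrate the effort.
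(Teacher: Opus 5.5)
Your proof is correct, but it is organized differently from the paper's.

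\textbf{The paper's route.} The paper does not apply Lemma~\ref{cor: conf_var_bound} to $\Var_w(\W_G)$ itself. It applies it to the pruned functional $G\mapsto \Var_w(\W_{\Ggood{G}{R}})$, so that Lemma~\ref{lem: new_lemma} is only ever invoked on the simple graph $\Ggood{\C_n}{R}$. Lemma~\ref{lem:delete_e_tree} guarantees that deleting an $R$-good edge commutes with pruning. The paper then returns to $\C_n$ through Lemma~\ref{eq: expected_var_diff} and an elementary ratio argument. It recovers $\Var_{\C_n}(\Var_w(\W_{\C_n}))=o(n^2)$ by bounded convergence, and transfers to $\G_n$ exactly as you do.

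\textbf{What your route buys and what it costs.} Working with the unpruned functional lets you skip Lemma~\ref{lem:delete_e_tree}, Lemma~\ref{eq: expected_var_diff} and the ratio step. Your bad-edge increments are also $O(\sqrt n)$, rather than the paper's $O(D^R\sqrt n\log n)$, which arises because deleting a bad edge can change the pruned graph throughout a $D^R$-neighborhood. The cost is the point you flagged: Lemma~\ref{lem: new_lemma} must hold for a multigraph in which only $\bB_e^\ell(G)$ is a simple tree.

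Your first fix is the right one, and it is routine. The proof of that lemma uses only three inputs. The first is Lemma~\ref{lem: edge_bonus}(i), whose input \eqref{eq:corr_decay_ind} comes from a statement about the tree ball plus a restriction argument that is valid for multigraphs. The second is the cut approximation by $\W_{\bB_e^\ell(G)}+\W_{U_e^\ell}$, with $O(D^\ell)$ boundary edge copies. The third is $\Var_w(\W_G)\le Cn$, which Lemma~\ref{lem: var_bounds}(a) already gives for multigraphs. Your vaguer second fix (deleting loops and multi-edges) would essentially reintroduce the paper's pruning.

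It is worth seeing why your shortcut works here but not everywhere. Lemma~\ref{lem: new_lemma} needs a tree neighborhood only around the deleted edge. In the quenched-mean step, Theorem~\ref{thm:local_perturbative_bound} needs tree neighborhoods around every edge, so pruning is genuinely needed there. This is why the paper sets up the pruned framework uniformly.

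\textbf{Minor points.} A non-tree ball of radius $\ell$ only forces a cycle of length at most $2\ell+2$, not $2\ell+1$. You can also simply cite \eqref{eq: L_n_size} for the $O(D^{2\ell})$ expected number of bad edges instead of re-deriving it from the cycle count.
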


\noindent \textbf{Proof of Condition III.} We will verify Condition III in two steps. First step, we reduce the CLT from $\G_n$ to $\C_n$. 

By Lemma~\ref{lem: variance_lower_bound}, 
\begin{align*}
\Var(\W_{\G_n}) \ge \E_{\G_n} \Var_w(\W_{\G_n}) \ge c   \E_{\G_n} |E(\G_n)| \ge cn/2. 
\end{align*}
So, if  $(n_k)$ is a subsequence such that $\liminf_k \beta_{n_k} >0, $ then  there exists a constant $c_0>0$ such that for all $k \ge 1$,
\begin{equation}\label{eq:var_lb_1}
    \Var_{\G_{n_k}}(\E_w \W_{\G_{n_k}}) \ge c_0 n_k.
\end{equation}
\begin{lem}\label{lem:G_to_C-CLT}
Let $(n_k)$ be a subsequence such that \eqref{eq:var_lb_1} holds.
 Suppose that 
\[ Y_{n_k}^{\mathrm c}:= \frac{\E_w \W_{\C_{n_k}} - \E \W_{\C_{n_k}}}{\sqrt{\Var_{\C_{n_k}}(\E_w \W_{\C_{n_k}})}}   \stackrel{d}{\to} \mathrm N(0,1) \text{ as }  \  k\to\infty. \]
Then  $Y_{n_k}^{\mathrm s} \stackrel{d}{\to} \mathrm N(0,1).$
\end{lem}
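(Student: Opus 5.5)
We transfer the CLT from $\C_n$ to $\G_n$ using Janson's switching technique \cite{Janson}. Write $f_n(G):=\E_w\W_G$ and let $S_n:=\{\C_n\text{ is simple}\}$, so that $\G_n\stackrel{d}{=}\C_n\mid S_n$. The point is that, for the local statistic $f_n$, the fluctuations of $f_n(\C_n)$ are asymptotically independent of the event $S_n$. The event $S_n$ is determined by the numbers of loops and double edges, $(L_n,M_n)$, which converge jointly in distribution to independent Poisson variables. Janson's argument then shows
\[ (Y^{\mathrm c}_{n_k}, L_{n_k}, M_{n_k}) \stackrel{d}{\to} (\xi, L, M), \qquad \xi\sim\mathrm N(0,1)\ \text{independent of } (L,M). \]
Given this, and since $\prob(S_{n_k})=\prob(L_{n_k}=M_{n_k}=0)$ is bounded away from $0$ by \eqref{eq: g_n_simple}, conditioning on $S_{n_k}$ yields $\mathcal L(Y^{\mathrm c}_{n_k}\mid S_{n_k})\to \mathrm N(0,1)$.

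The asymptotic independence is proved via switchings. Changing a bounded number of pairings alters $f_n$ by $O(1)$, since $0\le f(G)-f(G\setminus e)\le 1$ and adding an edge changes $f$ by at most $1$. We first compute factorial moments of $(L_n,M_n)$ jointly with the test function $e^{itY_n^{\mathrm c}}$, or with bounded Lipschitz test functions $\varphi(Y_n^{\mathrm c})$. Fix a prescribed set of $j$ loops and $\ell$ double edges on specified half-edges. Conditioning $\C_n$ on containing them is equivalent to a uniform pairing of the remaining half-edges, with these pairs added. By the Claim on switchings, this conditioned configuration can be coupled with an unconditioned one that differs in $O(j+\ell)$ pairs. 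Under this coupling $f_n$ changes by $O(j+\ell)$, so $Y^{\mathrm c}_n$ changes by $O((j+\ell)/\sqrt{n})$, using $\Var_{\C_n}(\E_w\W_{\C_n})\ge \Var_{\G_n}(\E_w\W_{\G_n})\prob(S_n)\ge c n$ from \eqref{eq:var_lb_1} and Lemma~\ref{lem: var_cn_gn}. Summing over configurations gives
\[ \E\bigl[\varphi(Y^{\mathrm c}_n)(L_n)_j(M_n)_\ell\bigr]=\E[\varphi(Y^{\mathrm c}_n)]\,\E[(L_n)_j(M_n)_\ell]+o(1) \]
for each fixed $j,\ell$. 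By the method of moments for the Poisson limit, together with inclusion–exclusion truncated via Bonferroni inequalities, this gives the joint convergence above.

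It remains to adjust the normalization, since $Y^{\mathrm s}$ uses the mean and variance under $\G_n$, i.e.\ under $\C_n$ conditioned on $S_n$. Write
\[ Y^{\mathrm s}_{n}=\frac{Y^{\mathrm c}_{n}-a_n}{b_n}, \qquad a_n:=\E[Y^{\mathrm c}_n\mid S_n], \quad b_n^2:=\Var(Y^{\mathrm c}_n\mid S_n). \]
We need $a_{n_k}\to0$ and $b_{n_k}\to1$, which requires uniform integrability of $(Y^{\mathrm c}_{n})^2$ under both measures. The plan is to obtain this from a fourth-moment bound $\E(Y^{\mathrm c}_n)^4=O(1)$: a higher-moment Efron–Stein inequality for the configuration model (the $L^p$ version of Lemma~\ref{cor: conf_var_bound}), or bounded-differences concentration for pairings, which applies since switches change $f_n$ by $O(1)$. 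Dividing by $\prob(S_n)\ge c$ gives the same bound under the conditional law. With uniform integrability, the conditional convergence in law implies convergence of the first two conditional moments to $0$ and $1$, and Slutsky's theorem finishes the proof.

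The main obstacle is the switching step: making the $O(j+\ell)$ coupling precise uniformly over the configurations being summed, and controlling the contributions from large $j,\ell$. The latter should follow from Poisson-type factorial moment bounds for loops and multi-edges under bounded degree.
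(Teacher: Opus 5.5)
Your proposal is correct, but it takes a different route from the paper.

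\textbf{The transfer step.} The paper never proves asymptotic independence from the simplicity event. It applies Janson's Corollary~2.3 (Lemma~\ref{lem: Janson_cor}) as a black box; its content is that the switched configuration model $\Gamma(\C_n)$ is $o(1)$-close to $\G_n$ in total variation. The only thing to verify is $f_n(\Gamma(\C_n))-f_n(\C_n)\to0$ in probability. This holds because each switch alters at most four edge copies, so $|\E_w\W_{\C_n}-\E_w\W_{\Gamma(\C_n)}|\le 4\vartheta_n$ with $\vartheta_n$ tight, while the normalization is at least $\sqrt{cn}$.

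You instead reprove the needed transfer by hand, and the argument works:
\begin{itemize}
\item You couple the pairing conditioned on a prescribed set of $j$ loops and $\ell$ double edges with an unconditioned one via $O(j+\ell)$ switches. The fact you need here is that $\sw^{a,b}(\bs)$ is exactly uniform given that $\{a,b\}$ is paired, rather than the paper's Claim; it is easy to check, since every target pairing has exactly $N-1$ preimages.
\item You then combine the resulting bound $C(j+\ell)/\sqrt n$, uniform over configurations, with Bonferroni truncation of $\ind\{L_n+M_n=0\}$. This gives $\E[\varphi(Y^{\mathrm c}_n)\ind_{S_n}]=\E[\varphi(Y^{\mathrm c}_n)]\prob(S_n)+o(1)$.
\end{itemize}
This is self-contained and proves the stronger asymptotic independence of $Y^{\mathrm c}_n$ from $(L_n,M_n)$. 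The cost is more bookkeeping; the paper's route is shorter because it leans on Janson's total-variation theorem.

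Two small corrections to this part. First, the degree sequence is not assumed to converge, so $(L_n,M_n)$ need not have a Poisson limit along your subsequence. You do not need one: for each fixed $s$ and large $n$, the bound $\E\binom{L_n+M_n}{s}\le C^s/s!$ already makes the Bonferroni truncation work (let $n\to\infty$, then the truncation level $\to\infty$). Second, the ``main obstacle'' you flag is not really one. Uniformity over configurations is built into your coupling bound, and large $j+\ell$ is handled by the truncation.

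\textbf{The normalization step.} Your fourth-moment bound via bounded-difference concentration for pairings works. However, it is more machinery than needed, and the $L^p$ Efron--Stein variant you mention is not available in the paper. Since $\E(Y^{\mathrm c}_n)^2=1$ and $Y^{\mathrm c}_n\to\mathrm N(0,1)$ in law, $(Y^{\mathrm c}_n)^2$ is automatically uniformly integrable. Conditioning on $S_n$, whose probability is bounded away from zero, preserves uniform integrability. This is exactly how the paper gets $\E[Y^{\mathrm c}_n\mid S_n]\to0$ and $\Var(Y^{\mathrm c}_n\mid S_n)\to1$.
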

Lemma~\ref{lem:G_to_C-CLT}  will be proved in Subsection~\ref{subsec:G_to_C-CLT}. 

Second, we will prove a CLT for $Y_{n_k}^{\mathrm c}$, which will imply Condition~III and complete the proof of Theorem~\ref{thm: annealed_CLT}.
\begin{prop} \label{lem:C-CLT}
 Let $(n_k)$ be a subsequence such that \eqref{eq:var_lb_1} holds.  Then 
  \[ Y_{n_k}^{\mathrm c} \stackrel{d}{\to} \mathrm N(0,1) \text{ as $k\to\infty$.} \]
\end{prop}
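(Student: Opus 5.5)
Our plan follows the outline in Subsection~\ref{subsec:outline_intro}. We approximate $\E_w\W_{\C_n}$ by a local statistic of the form \eqref{eq:local_stat_rep}, show that the approximation error has variance $o(n)$, and then apply Theorem~\ref{thm: BarbourRollin}. The lower bound \eqref{eq:var_lb_1}, transferred to $\C_n$ through Lemma~\ref{lem: var_cn_gn}, gives $\Var_{\C_{n_k}}(\E_w\W_{\C_{n_k}})\ge c n_k$. An $o(n)$ error in variance is therefore negligible after standardization.

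\textbf{Step 1: pruning.} Fix $R=R_n\to\infty$ slowly, say $R\le \varepsilon\log\log n$, and a constant $A\ge A_0$ from Theorem~\ref{thm:local_perturbative_bound}. Call an edge bad if $\bB_e^{2AR}(\C_n)$ is not a tree (Definition~\ref{df:bad_edge}), and let $\widehat\C_n$ be $\C_n$ with all bad edges, loops and multi-edges removed. Then $\widehat\C_n$ is a simple graph in which $\bB_e^{AR}$ is a tree for every edge. The expected number of bad edges is at most $D^{CAR}=n^{o(1)}$, since short cycles are rare, as in the proof of Condition~I. Each deletion changes $\E_w\W$ by at most $1$. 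To show $\Var(\E_w\W_{\C_n}-\E_w\W_{\widehat\C_n})=o(n)$ we would bound the second moment of the number of bad edges, which is $n^{o(1)}$ by the same counting. This is cleaner than invoking Lemma~\ref{cor: conf_var_bound}.

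\textbf{Step 2: local approximation.} Write $\E_w\W^{\mathrm{loc},R}_{\widehat\C_n}=\sum_v h_R(\bB_v^{k}(\C_n),v)$ with $k\asymp AR$. Here $h_R(\cdot,v)$ is half the sum, over non-bad edges $e\ni v$, of $\E_w w_eX_e^R$. Whether an edge is bad, and the ball $\bB^R_e$, are both determined by a neighborhood of radius $O(AR)$, so this is a genuine local statistic with $\|h_R\|_\infty\le D$. Next apply Lemma~\ref{cor: conf_var_bound} to
\[ f(G)=\E_w\W_{\widehat G}-\E_w\W^{\mathrm{loc},R}_{\widehat G}. \]
For a multigraph $G$ and an edge $e$, deleting $e$ from $G$ can change which edges are bad, but only inside the $2AR$-neighborhood of $e$. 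Away from the set of bad edges near $e$, the difference $f(G)-f(G\setminus e)$ is controlled by Theorem~\ref{thm:local_perturbative_bound}, which gives a bound $Ce^{-cR}$. When $e$ lies near short cycles, we use the trivial bound $O(D^{CAR})$. Summing, the first term of Lemma~\ref{cor: conf_var_bound} is at most $n e^{-2cR}+D^{CAR}\E[\#\text{bad}]=o(n)$.

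The second term, $N^{-1}\sum_{e,e'}(f(G\setminus e)-f(G\setminus\{e,e'\}))^2$, needs more care: a crude bound gives only $N^{-1}\cdot N^2\cdot O(1)$, which is too large. We use Theorem~\ref{thm:local_perturbative_bound} again, now on $G\setminus e$, which still satisfies the tree hypothesis after pruning. This gives $Ce^{-cR}$ for all but $n^{o(1)}$ pairs $(e,e')$, and hence a total of $O(n e^{-2cR})+n^{o(1)}=o(n)$. This bookkeeping, keeping track of how deletions alter the set of bad edges and therefore the pruned graph, is the main obstacle. To handle it, we would define the pruning so that it is monotone and local, and absorb every exceptional pair into the count of edges within distance $2AR$ of a short cycle.

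\textbf{Step 3: Gaussian limit.} By Steps 1 and 2, $\Var(\E_w\W_{\C_n}-H_n)=o(n)$, where $H_n$ is the local statistic of Step~2. Hence $\Var(H_n)\ge c n/2$ along the subsequence $(n_k)$. Theorem~\ref{thm: BarbourRollin} with $k=O(R)=o(\log n)$ then gives
\[ d_{\mathrm W}\bigl(\mathcal L((H_n-\E H_n)/\sqrt{\Var H_n}),\mathrm N(0,1)\bigr)\le CD^{CR}n^{-1/2}\to0. \]
Since the standardized versions of $H_n$ and $\E_w\W_{\C_n}$ differ by a term with variance $o(1)$, Slutsky's theorem yields $Y^{\mathrm c}_{n_k}\stackrel{d}{\to}\mathrm N(0,1)$.
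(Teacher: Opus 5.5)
Your proposal is correct and follows the paper's proof essentially step for step: you prune $\C_n$ to a simple locally tree-like graph at $o(n)$ variance cost via the second moment of the number of bad edges, then control the gap between $\E_w\W$ and the local statistic $\E_w\W^{\mathrm{loc},R}$ with the configuration-model Efron--Stein bound (Lemma~\ref{cor: conf_var_bound}) combined with Theorem~\ref{thm:local_perturbative_bound}, and finally apply Barbour--R\"ollin. The only difference is in how you track changes to the pruned graph: the paper uses Lemma~\ref{lem:delete_e_tree} to show that deleting a good edge leaves the remaining good set unchanged, whereas you enlarge the exceptional set to all edges within distance about $2AR$ of a bad edge and use that deletion never turns a good edge bad, which costs only an extra $n^{o(1)}$ factor.
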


We can represent $Y_n^{\mathrm c}$ as a graph statistics of $\C_n$ as in \eqref{eq:local_stat_rep}. We would like to invoke Theorem~\ref{thm: BarbourRollin} to prove a central limit theorem for $Y_n^{\mathrm c}$. However, we cannot directly apply Theorem~\ref{thm: BarbourRollin}, because $\E_w \W_{\C_n}$ is not a sum of functions that depend only on local neighborhoods in $\C_n$. Yet, thanks to the correlation decay, we will be able to approximate $\E_w \W_{\C_n}$ by such a sum, which allows us to 
apply Theorem~\ref{thm: BarbourRollin}. The details are carried in Subsection~\ref{sec: final_step}.

\subsection{Maximum weight matching on the pruned configuration model}

\begin{df}[$\ell$-good]\label{df:bad_edge} 
Let $G=(V(G),\cE(G),\partial_G)$ be a multigraph and let $\ell\ge1$. An edge copy $e\in\cE(G)$ is called $\ell$-good if $\bB_e^\ell(G)$ is a simple tree. Define 
\[ \Ggood{G}{\ell}  := \big( V(G), \left\{e\in\cE(G):e\text{ is $\ell$-good}\right\}, \left.\partial_G\right|_{\{e\in\cE(G):\,e\text{ is $\ell$-good}\}} \big). \] 
Thus, $\Ggood{G}{\ell} $ is the spanning submultigraph of $G$ obtained by deleting all edges that are not $\ell$-good and  $\cE(\Ggood{G}{\ell} )$ is the set of $\ell$-good edge copies of $G$.  In particular, $G^{[\ell]}$ is simple, and $\bB_e^\ell(G^{[\ell]})$ is a tree for every $e\in\cE(\Ggood{G}{\ell})$. The edge weights on $\Ggood{G}{\ell} $ are inherited from $G$. 
\end{df}

The MWM weight on the configuration model $\C_n$ and that on its pruned version $\Ggood{\C_n}{\ell}$ satisfy
    \begin{equation}\label{eq:mwm_diff_bd_1}
        0 \leq \W_{\C_n} - \W_{\Ggood{\C_n}{\ell}} \leq \sum_{e\in \cE(\C_n) \setminus \cE(\Ggood{\C_n}{\ell})} w_e.
    \end{equation}

  From (the proof of) \cite[Lemma~4.2]{vdHofstad} (see also Remark~4.3 there), one has
\begin{equation}
    \label{eq: L_n_size}
	\E_{\C_n}  \bigl|\cE(\C_n) \setminus \cE(\Ggood{\C_n}{\ell})\bigr| = O(D^{2\ell}), \qquad \E_{\C_n}  \bigl| \cE(\C_n) \setminus \cE(\Ggood{\C_n}{\ell})\bigr|^2 = O(D^{4\ell}).
	\end{equation}

\begin{lem}\label{lem:delete_e_tree}
Let $G\in\mathcal{MG}_n$, let $\ell\ge 1$, and let
$e\in \cE(\Ggood{G}{\ell})$. Then
\[\Ggood{(G\setminus e)}{\ell}= \Ggood{G}{\ell}\setminus e.\]
\end{lem}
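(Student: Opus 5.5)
The plan is to compare edge sets directly. Both $\Ggood{(G\setminus e)}{\ell}$ and $\Ggood{G}{\ell}\setminus e$ are spanning submultigraphs of $G$ on $V(G)$ with inherited endpoint maps and weights, and $e$ belongs to neither. So it suffices to show that for every edge copy $f\in\cE(G)$ with $f\neq e$,
\[
f \text{ is $\ell$-good in } G \iff f \text{ is $\ell$-good in } G\setminus e.
\]
Throughout I would use one elementary fact about balls. $\bB_f^\ell(H)$ is always connected: every vertex at distance at most $\ell$ from $f$ is joined to $f$ by a shortest path, and all vertices of that path are also within distance $\ell$ of $f$. So $\bB_f^\ell(H)$ is a simple tree if and only if it contains no cycle, where loops and parallel edges count as cycles of length $1$ and $2$.

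($\Rightarrow$) Suppose $\bB_f^\ell(G)$ is a simple tree. Deleting $e$ can only increase distances. Hence $V(\bB_f^\ell(G\setminus e))\subseteq V(\bB_f^\ell(G))$, and $\bB_f^\ell(G\setminus e)$ is a submultigraph of $\bB_f^\ell(G)$. It is therefore acyclic and simple, and it is connected by the fact above, so it is a simple tree. This direction uses nothing about $e$.

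($\Leftarrow$) This is the main step, and it is where the hypothesis that $e=(uv)$ is $\ell$-good in $G$ enters; note $u\ne v$ because $\bB_e^\ell(G)$ is simple. Assume $\bB_f^\ell(G\setminus e)$ is a simple tree but $\bB_f^\ell(G)$ is not. I would split into cases according to how $e$ interacts with the ball.

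First case: neither $u$ nor $v$ is within distance $\ell$ of $f$ in $G$. Then no shortest path of length at most $\ell$ from $f$ uses $e$, so distances to $f$ up to $\ell$ agree in $G$ and $G\setminus e$. Moreover $e$ is not an edge of the ball. The two balls therefore coincide, which is a contradiction.

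Second case: some endpoint, say $u$, satisfies $d_G(u,f)\le\ell$. The ball $\bB_f^\ell(G)$ differs from $\bB_f^\ell(G\setminus e)$ only by the edge $e$ and by vertices whose every short path to $f$ passes through $e$. Hence any cycle in $\bB_f^\ell(G)$ must either use $e$ or use such a newly reached vertex. In either subcase I would produce a cycle through $e$ whose vertices are all within distance $\ell$ of $e$, which contradicts the assumption that $\bB_e^\ell(G)$ is a tree.

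For the subcase where $e$ lies on a cycle and both $u,v$ are in $\bB_f^\ell(G\setminus e)$, the construction is as follows. Let $w$ be the vertex where the $G\setminus e$ tree-paths from $u$ and from $v$ toward $f$ meet. The path $u\to w\to v$ together with $e$ forms a cycle. Every vertex $x$ on the segment from $u$ to $w$ satisfies $d(u,x)\le d_{G\setminus e}(u,f)\le\ell$, and the same holds for the segment from $v$ to $w$. So the whole cycle lies in $\bB_e^\ell(G)$, giving the contradiction.

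The subcases involving a vertex reached only through $e$ (for example $v$ at distance $\ell+1$ from $f$ in $G\setminus e$, or a vertex hanging beyond $v$) follow the same recipe. A cycle in $\bB_f^\ell(G)$ through such a vertex must return to the tree part through a second route. Following both routes back toward $f$ closes a cycle through $e$, and its length bounds keep every vertex within $\ell$ of $u$ or $v$. I expect this case bookkeeping, and the distance estimates showing the cycle fits inside $\bB_e^\ell(G)$, to be the main obstacle. The rest of the argument is formal.
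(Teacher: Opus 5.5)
Your forward direction is the same as the paper's. Your subcase with both $u,v\in\bB_f^\ell(G\setminus e)$ is also fine. If the tree paths from $u$ and $v$ reach $f$ at different endpoints there is no meeting vertex $w$, but the $u$--$v$ tree path through $f$ still has length at most $2\ell+1$, which is all you use.

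\textbf{The gap.} It is in the remaining case, which is the heart of the lemma and which you leave only as a plan. Your recipe there rests on the claim that a cycle of $\bB_f^\ell(G)$ through a newly reached vertex ``must return to the tree part through a second route,'' so that a cycle through $e$ can be closed up. That claim is false: a cycle, loop, or parallel pair may lie entirely among the newly reached vertices. For instance, suppose a short cycle hangs off $v$ within distance $\ell-1-d_{G\setminus e}(u,f)$. Then $e$ is a bridge of $\bB_f^\ell(G)$, and no cycle through $e$ exists at all. Such configurations are excluded by the hypothesis, but by a different mechanism. You need to know that every newly reached vertex lies within distance $\ell-1$ of $v$, so that the offending cycle sits inside $\bB_e^\ell(G)$ directly. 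Your argument never establishes this localization of the new vertices. Your crossing-cycle idea itself does work: the closed walk has length at most $2\ell+2$ and uses $e$ exactly once.

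\textbf{How the paper closes it.} It first upgrades your subcase argument to the quantitative statement $d_{G\setminus e}(u,v)>2\ell+1$: a shorter $u$--$v$ path together with $e$ is a cycle all of whose vertices lie within $\ell$ of $\{u,v\}$. Put $H=G\setminus e$, $a=d_H(u,f)\le\ell-1$, and $r=\ell-a-1$. Then the following hold.
\begin{enumerate}
\item $d_H(v,f)>\ell$, and hence $V(\bB_f^\ell(G))=V(\bB_f^\ell(H))\cup V(\bB_v^r(H))$.
\item These two vertex sets are disjoint, since a common vertex would give $d_H(u,v)\le 2\ell$.
\item No edge of $H$ joins them, since such an edge would give $d_H(u,v)\le 2\ell+1$.
\item $\bB_v^r(H)$ is a connected subgraph of the simple tree $\bB_e^\ell(G)$, so it is itself a simple tree.
\end{enumerate}
Hence $\bB_f^\ell(G)$ is two simple trees joined by the single edge $e$. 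Your crossing argument corresponds to the ``no joining edge'' step. What your sketch is missing is the decomposition and the fact that the new part is itself a tree.
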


\begin{proof}
Set $H:=G\setminus e$. If
$f\in \cE(\Ggood{G}{\ell}) \setminus\{e\}$, then
$\bB_f^\ell(H)$ is a connected subgraph of the simple tree
$\bB_f^\ell(G)$. Hence
\[ \cE(\Ggood{G}{\ell}) \setminus\{e\}\subseteq \cE(\Ggood{H}{\ell}). \]

For the reverse inclusion, write $\partial_G(e)=\{u,v\}$. Since
$\bB_e^\ell(G)$ is a simple tree, $d_H(u,v)>2\ell+1$, 
otherwise, a shortest path from $u$ to $v$ in $H$, together with $e$,
would form a cycle in $\bB_e^\ell(G)$.

Let $f\in  \cE(\Ggood{H}{\ell})$. If
$\bB_f^\ell(G)=\bB_f^\ell(H)$, then $f\in \cE(\Ggood{G}{\ell})$. Otherwise,
after interchanging $u$ and $v$ if necessary, set
\[ a:=d_H(u,f)\le\ell-1, \qquad r:=\ell-a-1. \]
The preceding distance bound implies $d_H(v,f)>\ell$. Every shortest
path from $f$ to a vertex of
$V(\bB_f^\ell(G))\setminus V(\bB_f^\ell(H))$ must therefore traverse
$e$ from $u$ to $v$, and consequently, 
\[ V\bigl(\bB_f^\ell(G)\bigr) = V\bigl(\bB_f^\ell(H)\bigr) \cup V\bigl(\bB_v^r(H)\bigr). \]
These two vertex sets are disjoint, since a common vertex would give $d_H(u,v)\le a+1+\ell+r=2\ell.$
Moreover, no edge of $H$ joins them, since such an edge would give
\[d_H(u,v)\le a+1+\ell+1+r=2\ell+1. \]
Now $\bB_f^\ell(H)$ is a simple tree, while $\bB_v^r(H)$ is a connected
subgraph of the simple tree $\bB_e^\ell(G)$. Thus
$\bB_f^\ell(G)$ is obtained by joining these two simple trees by the
single edge $e$, and hence is itself a simple tree. Therefore
$f\in  \cE(\Ggood{G}{\ell})$.

It follows that $\cE(\Ggood{H}{\ell})=\cE(\Ggood{G}{\ell}) \setminus\{e\}$
and hence $\Ggood{(G\setminus e)}{\ell}= \Ggood{G}{\ell}\setminus e$.
\end{proof}

\begin{lem}\label{eq: expected_var_diff}
For any $\ell \ge 1$,  
    	\[\E_{\C_n} \big|\Var_w(\W_{\C_n}) - \Var_w(\W_{\Ggood{\C_n}{\ell}})\big| = O\big(D^{2\ell}\sqrt{n}\log{n}\big).\]
\end{lem}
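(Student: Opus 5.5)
Write $G=\C_n$, $G'=\Ggood{\C_n}{\ell}$ and $\Delta:=\W_G-\W_{G'}$. The plan is to use the standard-deviation triangle inequality \eqref{eq:sd_triangle} together with $|a^2-b^2|=|a-b|(a+b)$. This gives, for each fixed realization of $\C_n$,
\[
\big|\Var_w(\W_G)-\Var_w(\W_{G'})\big|
\le \sqrt{\Var_w(\Delta)}\,\Big(\sqrt{\Var_w(\W_G)}+\sqrt{\Var_w(\W_{G'})}\Big).
\]
Lemma~\ref{lem: var_bounds}(a) applies to both $G$ and $G'$, which lie in $\mathcal{MG}_{n,D}$. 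It bounds the second factor by $C\sqrt n$ deterministically. It then suffices to show
\[
\E_{\C_n}\sqrt{\Var_w(\Delta)}=O(D^{2\ell}\log n).
\]

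For this, I use \eqref{eq:mwm_diff_bd_1}: $0\le\Delta\le\sum_{e\in L}w_e$, where $L:=\cE(\C_n)\setminus\cE(\Ggood{\C_n}{\ell})$. The set $L$ is determined by the graph alone, so it is deterministic under $\E_w$. Hence
\[
\Var_w(\Delta)\le\E_w\Delta^2\le \E_w\Big(\sum_{e\in L}w_e\Big)^2=|L|^2+|L|\le 2|L|^2 .
\]
This gives $\sqrt{\Var_w(\Delta)}\le\sqrt2\,|L|$. By \eqref{eq: L_n_size}, $\E_{\C_n}|L|=O(D^{2\ell})$. Combining the estimates yields $\E_{\C_n}|\Var_w(\W_{\C_n})-\Var_w(\W_{\Ggood{\C_n}{\ell}})|=O(D^{2\ell}\sqrt n)$. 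This is stronger than the claim, and the extra $\log n$ in the statement leaves slack.

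Alternatively, one could argue as in the proof of Condition~I, bounding $\Delta\le |L|\max_e w_e$ and applying Lemma~\ref{lem: exp}; that route produces exactly the $\log n$ factor. The only step needing care is the first one. The triangle inequality \eqref{eq:sd_triangle} must be applied conditionally on the graph. The variance bound of Lemma~\ref{lem: var_bounds}(a) must hold uniformly over all multigraphs of maximum degree $D$. This is true, since its proof used only Efron--Stein over edge copies. I expect no genuine obstacle beyond this bookkeeping.
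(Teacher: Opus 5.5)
Your proposal is correct and follows the same route as the paper. Both arguments combine the standard-deviation triangle inequality \eqref{eq:sd_triangle}, the factorization $|a^2-b^2|=|a-b|(a+b)$ with the uniform bound of Lemma~\ref{lem: var_bounds}(a), and the first-moment bound \eqref{eq: L_n_size} on $|L|$.

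The only difference is in bounding $\E_w\bigl(\sum_{e\in L}w_e\bigr)^2$. The paper uses $|L|^2\,\E_w\max_e w_e^2$ together with Lemma~\ref{lem: exp}, which is where its $\log n$ comes from. Your exact computation $|L|^2+|L|$ removes that factor and yields the slightly sharper $O(D^{2\ell}\sqrt n)$.
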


\begin{proof}
    By \eqref{eq:mwm_diff_bd_1} and Lemma~\ref{lem: exp}, we have
	\[\Var_w(\W_{\C_n} - \W_{\Ggood{\C_n}{\ell}}) \leq \E_w\Big(\sum_{e\in \cE(\C_n) \setminus \cE(\Ggood{\C_n}{\ell})} w_e\Big)^2 \leq |\cE(\C_n) \setminus \cE(\Ggood{\C_n}{\ell})|^2 \cdot O((\log{n})^2).	\]
	Hence, by \eqref{eq:sd_triangle},
	\[\Big|\sqrt{\Var_w(\W_{\C_n})} - \sqrt{\Var_w(\W_{\Ggood{\C_n}{\ell}})}\Big| \leq \sqrt{\Var_w\big(\W_{\C_n} - \W_{\Ggood{\C_n}{\ell}}\big)} \leq |\cE(\C_n) \setminus \cE(\Ggood{\C_n}{\ell})| O(\log{n}).\]
    Thus
	\begin{align*}
	\big|\Var_w(\W_{\C_n}) - \Var_w(\W_{\Ggood{\C_n}{\ell}})\big| &\leq  \Big(\sqrt{\Var_w(\W_{\C_n})} + \sqrt{\Var_w(\W_{\Ggood{\C_n}{\ell}})}\Big)|\cE(\C_n) \setminus \cE(\Ggood{\C_n}{\ell})| O(\log{n}) \\
 &\le |\cE(\C_n) \setminus \cE(\Ggood{\C_n}{\ell})|O(\sqrt{n}\log{n})
	\end{align*}
	where in the last inequality, we used the fact that there exists a nonrandom constant $C>0$ such that almost surely
	\[ \Var_w(\W_{\C_n}) \leq Cn, \quad  \Var_w(\W_{\Ggood{\C_n}{\ell}}) \leq Cn,\]
as a result of Lemma~\ref{lem: var_bounds}(a). The lemma now follows from \eqref{eq: L_n_size}.
\end{proof}

\subsection{Concentration of the quenched variance} \label{subsec:conc_quenched_var}
 This subsection is devoted to the proof of Proposition~\ref{lem: variance_conc}. The proof will be divided into several steps. We first claim that it suffices to show the statement if we replace $\G_n$ by $\C_n$, i.e., 
    \begin{equation} \label{eq: variance_conc_conf}
	\frac{\Var_w(\W_{\C_n})}{\E_{\C_n} \Var_w(\W_{\C_n})} \stackrel{p}{\rightarrow} 1 \text{ as $n\to\infty$.}
	\end{equation}
    By Lemma~\ref{lem: var_bounds}(a) and (b), the ratio in \eqref{eq: variance_conc_conf} is bounded above. Hence, by the bounded convergence theorem,  \eqref{eq: variance_conc_conf} implies that
    \[ \Var_{\C_n}\Big(\frac{\Var_w(\W_{\C_n})}{\E_{\C_n}  \Var_w(\W_{\C_n})}\Big) \to 0\text{ as $n\to\infty$.}\]
    Using Lemma~\ref{lem: var_bounds}(b) again, we then obtain
    \[ \Var_{\C_n}(\Var_w(\W_{\C_n})) = o(n^2). \]
    Now Lemma~\ref{lem: var_cn_gn}, applied to the function $f(G) :=\Var_w(\W_{G}) $ for a multigraph $G$,  implies that
    \begin{equation} \label{eq: var_var}
	\Var_{\G_n}(\Var_w(\W_{\G_n})) = o(n^2).
	\end{equation}
	Finally, note that for any $\varepsilon>0$, by Chebyshev's inequality,
	\[\prob_{\G_n} (|\Var_w(\W_{\G_n}) - \E_{\G_n} \Var_w(\W_{\G_n})| > \varepsilon \E_{\G_n} \Var_w(\W_{\G_n})) \leq \frac{\Var_{\G_n}(\Var_w(\W_{\G_n}))}{\varepsilon^2(\E_{\G_n} \Var_w(\W_{\G_n}))^2}.\]
By \eqref{eq: var_var} and Lemma~\ref{lem: variance_lower_bound}, the right-hand side tends to $0$ as $n\to\infty$, implying  Proposition~\ref{lem: variance_conc}. Thus, it remains to show \eqref{eq: variance_conc_conf}.

Let $R$ be a positive integer satisfying $ 1 \ll R \ll \log n$. Recall that  (see Definition~\ref{df:bad_edge}) $\Ggood{\C_n}{R}$ is the simple graph obtained from $\C_n$ by removing all edges that are not $R$-good. Next, we will argue that to show \eqref{eq: variance_conc_conf}, it suffices to prove the following lemma. 
 \begin{lem}\label{lem: var_conc}
    For $1\ll R\ll \log{n}$, one has
    	\begin{equation} \label{eq: var_conc}
		\Var_{\C_n}(\Var_w(\W_{\Ggood{\C_n}{R}})) = o(n^2).
	\end{equation}
  \end{lem}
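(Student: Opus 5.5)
We apply the Efron--Stein bound for the configuration model, Lemma~\ref{cor: conf_var_bound}, to the graph functional
\[ f(G):=\Var_w\bigl(\W_{\Ggood{G}{R}}\bigr), \qquad G\in\mathcal{MG}_{n,D}. \]
This requires two estimates. First, a bound on $|f(\C_n)-f(\C_n\setminus e)|$ for single edges $e\in\cE(\C_n)$. Second, a bound on $|f(\C_n\setminus e)-f(\C_n\setminus\{e,e'\})|$ for pairs. The goal is to show that both right-hand terms of Lemma~\ref{cor: conf_var_bound} are $o(n^2)$.

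\textbf{Good edges.} Fix $e\in\cE(\C_n)$ that is $R$-good. By Lemma~\ref{lem:delete_e_tree}, $\Ggood{(\C_n\setminus e)}{R}=\Ggood{\C_n}{R}\setminus e$. Moreover $\bB_e^R(\Ggood{\C_n}{R})$ is a tree, so Lemma~\ref{lem: new_lemma} applies with $\ell=R$ to the simple graph $\Ggood{\C_n}{R}$. This gives
\[ |f(\C_n)-f(\C_n\setminus e)|\le \gamma^R\sqrt n+CD^R. \]

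\textbf{Bad edges.} If $e$ is not $R$-good, deleting it can change which edges are $R$-good. Deleting an edge never creates cycles, so every edge that was good stays good. An edge $g$ can only become good if $e$ lies within distance $R$ of $g$, which is possible for at most $O(D^{R+1})$ edges. So $\Ggood{(\C_n\setminus e)}{R}$ and $\Ggood{\C_n}{R}$ differ by a set $F_e$ of $O(D^{R})$ edges. Combining \eqref{eq:sd_triangle} with $\Var_w\le Cn$ (Lemma~\ref{lem: var_bounds}(a)) and the $L^2$ bound $\E_w(\sum_{g\in F_e}w_g)^2\le |F_e|^2$, exactly as in Lemma~\ref{eq: expected_var_diff}, gives the crude bound
\[ |f(\C_n)-f(\C_n\setminus e)|\le C D^{R}\sqrt n. \]

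\textbf{The single-edge sum.} Summing over the edges gives
\[ \E\sum_e (f(\C_n)-f(\C_n\setminus e))^2 \le Cn(\gamma^{2R}n+D^{2R})+CD^{2R}n\,\E\bigl|\cE(\C_n)\setminus\cE(\Ggood{\C_n}{R})\bigr|. \]
By \eqref{eq: L_n_size}, the last expectation is $O(D^{2R})$. Since $R\to\infty$ and $R\ll\log n$, the quantities $D^{4R}$ and $D^{2R}$ are $n^{o(1)}$, and $\gamma^{2R}\to 0$. Hence this sum is $o(n^2)$.

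\textbf{The pair term.} Here we only need
\[ \frac1N\sum_{e,e'}\bigl(f(\C_n\setminus e)-f(\C_n\setminus\{e,e'\})\bigr)^2=o(n^2). \]
We repeat the previous argument inside the graph $\C_n\setminus e$. Deleting $e$ can only make edges good, so deleting one extra edge again changes the good set by $O(D^R)$ edges, giving a crude bound of $CD^R\sqrt n$ for every pair. Then the pair term is at most
\[ \frac{C}{N}\,n^2\cdot D^{2R}n=O(D^{2R}n^2), \]
which is not good enough. We therefore refine it as before: the crude bound applies only when $e'$ is bad in $\C_n\setminus e$. Otherwise Lemmas~\ref{lem:delete_e_tree} and \ref{lem: new_lemma} give the bound $\gamma^R\sqrt n+CD^R$.

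An edge $e'$ that is bad in $\C_n\setminus e$ is also bad in $\C_n$. Therefore the pair term is bounded by
\[ \frac{C}{N}\Bigl[n^2(\gamma^{2R}n+D^{2R})+n\cdot D^{2R}n\,\E\bigl|\cE(\C_n)\setminus\cE(\Ggood{\C_n}{R})\bigr|\Bigr]=o(n^2). \]

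\textbf{Main obstacle.} The hardest step is the bookkeeping for bad edges. We must check carefully that deleting an edge only adds good edges, and at most $O(D^{R})$ of them. For this we use that an edge $g$ whose $R$-ball does not contain the deleted edge has an unchanged ball. We must also confirm that the resulting $D^{O(R)}$ factors are $n^{o(1)}$, which holds because $R\ll\log n$.
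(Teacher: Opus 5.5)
Your proposal is correct and follows essentially the same route as the paper. It applies Lemma~\ref{cor: conf_var_bound} to $f(G)=\Var_w(\W_{\Ggood{G}{R}})$, handles $R$-good edges via Lemma~\ref{lem:delete_e_tree} together with Lemma~\ref{lem: new_lemma}, and handles bad edges via a crude $D^{O(R)}\sqrt n$ bound weighted by $\E_{\C_n}|\cE(\C_n)\setminus\cE(\Ggood{\C_n}{R})|=O(D^{2R})$ from \eqref{eq: L_n_size}. The differences are cosmetic: your direct second-moment bound on $\sum_{g\in F_e}w_g$ removes the paper's extra $\log n$ factor, which came from bounding by the maximum edge weight, and you write out the pair term that the paper leaves as ``similar''.
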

Let's argue why \eqref{eq: var_conc} implies \eqref{eq: variance_conc_conf}. Note that
it follows from Lemma~\ref{eq: expected_var_diff} and the fact that $R  = o(\log n)$ that for any fixed $\varepsilon>0$
\begin{equation}\label{eq:var_approx_2R}
    \E_{\C_n} \big|\Var_w(\W_{\C_n}) - \Var_w(\W_{\Ggood{\C_n}{R}})\big| = O_\varepsilon(n^{1/2+ \varepsilon}).
\end{equation}
 Further, by Lemma~\ref{lem: variance_lower_bound} and \eqref{eq: L_n_size}, there exists $c>0$ depending only on $D$ such that $\E_{\C_n} \Var_w(\W_{\Ggood{\C_n}{R}})  \geq cn$. This, together with \eqref{eq: var_conc} and Chebyshev's inequality, implies that
	\begin{equation}
		\label{eq: var_w_tilde_conv}
		\frac{\Var_w(\W_{\Ggood{\C_n}{R}})}{\E_{\C_n} \Var_w(\W_{\Ggood{\C_n}{R}})} \stackrel{p}{\rightarrow} 1.
	\end{equation}
We will use the following elementary fact:

If $(S_n)$, $(T_n)$ are two sequences of random variables with finite expectations such that (a) $S_n/ \E S_n \stackrel{p}{\rightarrow} 1,$ (b)  there exists $c>0$ such that  $\E T_n\geq cn$ for all $n$, and (c) $(S_n - T_n)/n \stackrel{L^1}{\rightarrow} 0$, then $ T_n /\E T_n \stackrel{p}{\rightarrow} 1.$

We apply this with  $S_n = \Var_w(\W_{\Ggood{\C_n}{R}})$ and $T_n = \Var_w(\W_{\C_n})$.
Then \eqref{eq: var_w_tilde_conv}, \eqref{eq:var_approx_2R}, and
Lemma~\ref{lem: var_bounds}(b), which gives $\E_{\C_n}T_n = \E_{\C_n}\Var_w(\W_{\C_n})
\ge cn,$
imply \eqref{eq: variance_conc_conf}.

  \begin{proof}[Proof of Lemma~\ref{lem: var_conc}] By an application
	of Lemma~\ref{cor: conf_var_bound} to the function $f(G) = \Var_w(\W_{\Ggood{G}{R}}) $ for $G \in \mathcal{MG}_n$,  it suffices to show that
	\begin{equation} \label{eq: var_diff_bound}
        \E_{\C_n} \sum_{e\in \cE(\C_n)}\big(\Var_w(\W_{\Ggood{\C_n}{R}}) - \Var_w(\W_{\Ggood{(\C_n \setminus e)}{R}})\big)^2 = o(n^2)
    \end{equation}
    and
    \begin{equation} \label{eq: var_diff_bound_2}
        \E_{\C_n} \sum_{e, e'\in \cE(\C_n), e\neq e'} \big(\Var_w(\W_{\Ggood{(\C_n \setminus e)}{R}}) - \Var_w(\W_{\Ggood{(\C_n \setminus \{e, e'\})}{R}})\big)^2 = o(n^3).
    \end{equation}
First consider \eqref{eq: var_diff_bound}.  We decompose the sum into two corresponding to  $e \in \cE(\Ggood{\C_n}{R})$ and $e  \not \in \cE(\Ggood{\C_n}{R})$. 
	We bound the second term as follows.
    \begin{align*}
        & \E_{\C_n}\sum_{e \not\in \cE(\Ggood{\C_n}{R})}\big(\Var_w(\W_{\Ggood{\C_n}{R}}) - \Var_w(\W_{\Ggood{(\C_n \setminus e)}{R}})\big)^2 \\
        \leq &\; \E_{\C_n}\Big[ \big|\cE(\C_n) \setminus \cE(\Ggood{\C_n}{R})\big| \cdot \max_{e \in \cE(\C_n) \setminus \cE(\Ggood{\C_n}{R})}\big(\Var_w(\W_{\Ggood{\C_n}{R}}) - \Var_w(\W_{\Ggood{(\C_n \setminus e)}{R}}) \big)^2\Big].
    \end{align*}
    Fix any $G\in \mathcal{MG}_{n,D}$ and $ e\in \cE(G)$. 
    Note that if we remove an edge copy $e$ from $G$, it is possible to turn a not $R$-good  edge copy in $\bB_e^{R}(G)$  into a good one, and thus $|\W_{\Ggood{G}{R}} - \W_{\Ggood{(G \setminus e)}{R}}|$ is bounded above by the sum of edge weights inside $\bB_e^{R}(G)$, which, in turn, can be bounded above by $\max_{f\in \cE(G)} w_f   \cdot O( D^{R}) $. Therefore, by Lemma~\ref{lem: exp} and \eqref{eq:sd_triangle}, 
	\begin{align*}
	\big|\sqrt{\Var_w(\W_{\Ggood{G}{R}})} - \sqrt{\Var_w(\W_{\Ggood{(G \setminus e)}{R}})}\big| &\le \sqrt{\Var_w \big(\W_{\Ggood{G}{R}} - \W_{\Ggood{(G \setminus e)}{R}}\big)} \\
    &\le  O( D^{R}) \big( \E_w \max_{f \in \cE(G)} w_f^2 \big)^{1/2}  = O( D^{R}\log{n}).
	\end{align*}
 Combining with Lemma~\ref{lem: var_bounds}(a), we see that
	\[\big|\Var_w(\W_{\Ggood{G}{R}}) - \Var_w(\W_{\Ggood{(G \setminus e)}{R}})\big| = O( D^{R}\sqrt{n} \log{n}).\]
 The above bound is uniform over $G \in \mathcal{MG}_{n,D}$ and $e \in \cE(G)$. Consequently, 
	\begin{equation*}
	\max_{e \in \cE(\C_n)} \big|\Var_w(\W_{\Ggood{\C_n}{R}}) - \Var_w(\W_{\Ggood{(\C_n \setminus e)}{R}})\big| \leq C_1D^{R}\sqrt{n}\log{n}.
	\end{equation*}
    Together with \eqref{eq: L_n_size}, noting $R \ll \log{n}$, we have, for any fixed $\varepsilon >0$, 
	\begin{equation}
	    \label{eq: NT_bound}
        \E_{\C_n}\sum_{e  \not \in \cE(\Ggood{\C_n}{R})}\big(\Var_w(\W_{\Ggood{\C_n}{R}}) - \Var_w(\W_{\Ggood{(\C_n \setminus e)}{R}})\big)^2  =  O( D^{R}\sqrt{n}\log{n})^2 \cdot O(D^{2R}) = O_\varepsilon(n^{1 + \varepsilon}).
	\end{equation}

    Next, we handle the term  involving $e \in \cE(\Ggood{\C_n}{R})$. By Lemma~\ref{lem:delete_e_tree}, $\Ggood{(\C_n \setminus e)}{R}= \Ggood{\C_n}{R}\setminus e$.
  Applying Lemma~\ref{lem: new_lemma} to the simple graph $\Ggood{\C_n}{R} \in \mathcal{G}_{n, D}$ and using $1\ll R\ll \log{n}$, we obtain
        \begin{align*}
\max_{e \in \cE(\Ggood{\C_n}{R})} \big|\Var_w(\W_{\Ggood{\C_n}{R}}) - \Var_w(\W_{\Ggood{(\C_n \setminus e)}{R}})\big| = o(\sqrt{n}).
    \end{align*}
    Therefore, $\E_{\C_n}\sum_{e \in \cE(\Ggood{\C_n}{R})}\big(\Var_w(\W_{\Ggood{\C_n}{R}}) - \Var_w(\W_{\Ggood{(\C_n \setminus e)}{R}})\big)^2$ can be bounded above by 
    \begin{equation} \label{eq: T_bound}
        \begin{split}
         N\E_{\C_n} \max_{e \in \cE(\Ggood{\C_n}{R})}\big(\Var_w(\W_{\Ggood{\C_n}{R}}) - \Var_w(\W_{\Ggood{(\C_n \setminus e)}{R}})\big)^2 = o(n^2).
        \end{split}
    \end{equation}
    Combining \eqref{eq: NT_bound}, \eqref{eq: T_bound},  we have
    \begin{align*}
    \E_{\C_n}\sum_{e \in \cE(\C_n)}\big(\Var_w(\W_{\Ggood{\C_n}{R}}) - \Var_w(\W_{\Ggood{(\C_n \setminus e)}{R}})\big)^2 = o(n^2).
    \end{align*}
    The estimate \eqref{eq: var_diff_bound_2} can be shown similarly, and we will omit the details. This completes the proof of the lemma. 
    \end{proof}

\subsection{Transfer of the CLT from $\C_n$ to $\G_n$}\label{subsec:G_to_C-CLT}

In this section, we prove Lemma~\ref{lem:G_to_C-CLT}. To simplify notation, we write $(n)$ in place of the subsequence $(n_k)$.  We first describe the result in \cite{Janson} that allows us to `transfer' the central limit theorem for $\C_n$ to that for the configuration model $\G_n$.

Consider the configuration model $\C_n=G_0:=\chi(\bs_0),$
where $\bs_0$ is uniform on $\mathcal P_{\mathbb H}$. We remove loops and
parallel edges by performing a sequence of switches. At step $i$, suppose that
$G_i:=\chi(\bs_i)$ is not simple. Choose the first half-edge $h_x$, according
to the fixed lexicographic ordering of $\mathbb H$, such that the edge copy
corresponding to $\{h_x,\bs_i(h_x)\}$
is either a loop or belongs to a family of parallel edge copies in $G_i$.
Choose $h_y$ uniformly from
$\mathbb H\setminus\{h_x,\bs_i(h_x)\}$, and set
\[ \bs_{i+1}:=\sw^{h_x,h_y}(\bs_i),
\qquad G_{i+1}:=\chi(\bs_{i+1}). \]
This switch removes the selected bad edge copy, although it may create new bad
edge copies. We repeat the procedure until the first time $\vartheta_n$ for which $G_{\vartheta_n}$
is simple, thereby obtaining the sequence $G_0,G_1,\ldots,G_{\vartheta_n}$.

It is shown in \cite{Sjostrand} that this procedure terminates almost surely
whenever there exists a simple graph with degree sequence $\mathbf d$.
Moreover, under \eqref{eq: degree_seq_cond},  \cite[Theorem~3.2]{Janson} shows that, with
high probability, no new bad edge copies are created and
$\lim_{K \to \infty} \sup_n \prob_{\C_n}(\vartheta_n \geq K) = 0$. Following Janson, we call the resulting simple graph the
\emph{switched configuration model} and write
\[ \Gamma(\C_n):=G_{\vartheta_n}. \]
The law of $\Gamma(\C_n)$ need not coincide exactly with that of $\G_n$.
Nevertheless, under \eqref{eq: degree_seq_cond}, \cite{Janson} shows that the
two distributions are asymptotically close in a sense sufficient to transfer a
central limit theorem from $\C_n$ to $\G_n$, as stated below.

\begin{lem}[\cite{Janson}, Corollary~2.3, simplified]\label{lem: Janson_cor}
	Suppose that the degree sequence $\mathbf{d}$ satisfies \eqref{eq: degree_seq_cond}. Let $f_n: \mathcal{MG}_n \to \mathbb{R}$ be such that
	\[ f_n(\C_n) \stackrel{d}{\to} \mathrm N(0,1), \qquad f_n(\Gamma(\C_n)) - f_n(\C_n) \stackrel{p}{\rightarrow} 0, \quad \text{as } n \to \infty.\]
	 Then as $n \to \infty,$
	\[ f_n(\G_n) \stackrel{d}{\to} \mathrm N(0,1).\]
\end{lem}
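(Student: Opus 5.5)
The plan is to reduce the lemma to a single structural fact about the switching procedure: the law of the switched configuration model $\Gamma(\C_n)$ is asymptotically indistinguishable from the uniform law of $\G_n$ in total variation,
\[
d_{\mathrm{TV}}\bigl(\mathcal L(\Gamma(\C_n)),\mathcal L(\G_n)\bigr)\to0 .
\]
Granting this, the lemma follows quickly. Write $f_n(\Gamma(\C_n)) = f_n(\C_n) + \bigl(f_n(\Gamma(\C_n))-f_n(\C_n)\bigr)$. By hypothesis the first term converges to $\mathrm N(0,1)$ and the second tends to $0$ in probability, so Slutsky's theorem gives $f_n(\Gamma(\C_n))\stackrel{d}{\to}\mathrm N(0,1)$. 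For every Borel set $B$,
\[
\bigl|\prob(f_n(\G_n)\in B)-\prob(f_n(\Gamma(\C_n))\in B)\bigr|\le d_{\mathrm{TV}}\bigl(\mathcal L(\Gamma(\C_n)),\mathcal L(\G_n)\bigr),
\]
so $f_n(\G_n)$ has the same distributional limit. Since only convergence in distribution is needed, it would even suffice to have $d_{\mathrm{TV}}\to0$ along every subsequence.

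For the total-variation statement I would argue by counting preimages. Fix $K$ and let $\mathcal E_K$ be the event that $\C_n$ has at most $K$ loops and double edges, no triple edges, and that these are pairwise far apart, and that every switch in the procedure creates no new bad edge. By \eqref{eq: g_n_simple}, the Poisson approximation for the counts of loops and double edges in the configuration model, and \cite[Theorem~3.2]{Janson}, we get $\prob(\mathcal E_K^c)\le\varepsilon(K)+o(1)$ with $\varepsilon(K)\to0$. On $\mathcal E_K$ the procedure performs exactly one switch per bad edge, so for a fixed simple graph $G$ we can write
\[
\prob\bigl(\Gamma(\C_n)=G,\ \mathcal E_K\bigr)=\sum_{j\le K}\ \sum_{\text{switch histories}}\prob(\text{pairing})\prod(\text{uniform choice of }h_y),
\]
where each factor of the form $1/(N-2)$ comes from the uniform choice of $h_y$. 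Reversing the process, a pairing with $j$ bad edges that ends at $G$ corresponds to choosing $j$ ``reverse switches'' in $G$: pick an edge $e$ of $G$ and a second edge so that undoing the switch creates a loop or a double edge at a prescribed location. The number of such reverse configurations is a polynomial in local subgraph counts of $G$ (for example $\sum_i d_i(d_i-1)$ and the number of paths of length two or three), up to lower-order corrections.

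The main obstacle is showing that this preimage weight is $\bigl(1+o(1)\bigr)$ times a quantity that does not depend on $G$, for all $G$ outside a set of vanishing uniform measure. These counts are sums over edges of local quantities that concentrate under the uniform simple-graph law, with fluctuations $O(\sqrt n)$ relative to means of order $n^{j}$. I would show this with second-moment bounds under $\G_n$, transferred from $\C_n$ via Lemma~\ref{lem: var_cn_gn}. The atypical graphs then contribute $o(1)$ to both laws, and on the typical set the ratio $\prob(\Gamma(\C_n)=G)/\prob(\G_n=G)$ is $1+o(1)$ uniformly. This gives $d_{\mathrm{TV}}\le\varepsilon(K)+o(1)$, and letting $K\to\infty$ completes the argument.
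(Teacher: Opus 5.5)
Your proposal is correct and follows the same route as the source. The paper gives no argument of its own; it invokes Janson's Corollary~2.3, which is derived exactly as in your first paragraph, from Janson's theorem that $d_{\mathrm{TV}}\bigl(\mathcal L(\Gamma(\C_n)),\mathcal L(\G_n)\bigr)\to 0$ together with Slutsky's theorem. Your reverse-switching outline of that total-variation bound matches, as far as I recall, the counting argument behind the cited theorem, and it is sound for uniformly bounded degrees: preimage weights are governed by counts of paths of length two and three, these concentrate on the $\sqrt n$ scale, and comparing total masses on the typical set forces the normalizing constant to lie within $\varepsilon(K)+o(1)$ of $1$.
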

We will apply Lemma~\ref{lem: Janson_cor} with 
\[ f_n(G) = (\E_w \W_G  -  \E \W_{\C_n})/ \sqrt{ \Var_{\C_n} (\E_w \W_{\C_n} ) }, \quad G \in \mathcal{MG}_n.\]
Note that each switch modifies at most four edge copies in the multigraph, in the sense that it deletes at most two edge copies and creates at most two new ones.  Since the edge weights are independent of $\C_n$ and of the switching procedure, changing one edge copy changes the quantity $\E_w \W$ by at most $\E w_e = 1$. Therefore, $|\E_w\W_{\C_n} - \E_w\W_{\Gamma(\C_n)}|
\le 4\vartheta_n$.
 On the other hand,  by Lemma~\ref{lem: var_cn_gn} and assumption \eqref{eq:var_lb_1}, there exists a positive constant $c$ such that 
\begin{equation} \label{eq:var_lb_2}
    \Var_{\C_n}(\E_w \W_{\C_n}) \ge c n.
\end{equation}
Therefore,
\[ f_n (\C_n)   - f_n (\Gamma(\C_n)) =  \big( \E_w \W_{\C_n} - \E_w \W_{\Gamma(\C_n)} \big)/\sqrt{\Var_{\C_n}(\E_w \W_{\C_n})} \stackrel{p}{\to} 0. \]
On the other hand, by hypothesis, $f_n (\C_n) \stackrel{d}{\to} \mathrm N(0, 1)$.
Hence,   Lemma~\ref{lem: Janson_cor} yields 
\begin{equation}\label{eq: CLT_Z_n}
	f_n(\G_n) = \frac{\E_w\W_{\G_n} - \E \W_{\C_n}}{\sqrt{\Var_{\C_n}(\E_w \W_{\C_n})}} \stackrel{d}{\to} \mathrm N(0,1).
\end{equation}
Note that $\E f_n(\C_n)=0$ and $\Var(f_n(\C_n))=1$. Also, $f_n(\C_n)\stackrel{d}{\to} \mathrm N(0,1)$. By \cite[Theorem~5.5.9]{Gut}, the sequence $(f_n(\C_n)^2)_{n\geq 1}$ is uniformly integrable. As the law of $f_n(\G_n)$ is the conditional law of $f_n(\C_n)$ given that $\C_n$ is simple, it follows from  \eqref{eq: g_n_simple} that $(f_n(\G_n)^2)_{n\geq 1}$ is also uniformly integrable. Since $f_n(\G_n)\stackrel{d}{\to} \mathrm N(0,1)$ by \eqref{eq: CLT_Z_n}, another application of \cite[Theorem~5.5.9]{Gut} gives
\[ \E f_n(\G_n) \to 0, \quad \Var(f_n(\G_n)) \to 1 \quad \text{as } n\to\infty. \]
This is equivalent to saying
\[ \frac{\E\W_{\G_n} - \E \W_{\C_n}}{\sqrt{\Var_{\C_n}(\E_w \W_{\C_n})}} \to 0, \quad \frac{\Var_{\G_n}(\E_w\W_{\G_n})}{\Var_{\C_n}(\E_w\W_{\C_n})} \to 1\quad \text{as $n\to\infty$.}\]
These, together with \eqref{eq: CLT_Z_n}, imply $Y_n^{\mathrm s}  \stackrel{d}{\to} \mathrm N(0,1).$
This completes the proof of Lemma~\ref{lem:G_to_C-CLT}. \qed

\subsection{CLT for quenched mean} 
\label{sec: final_step}
In this section, we will prove Proposition~\ref{lem:C-CLT}.  To lighten the notation, we will again use the full sequence  $(n)$ in place of the subsequence $(n_k)$. As before, let $R$ be a positive integer satisfying $ 1 \ll R \ll \log n$. Let $A$ be a large positive integer chosen according to Theorem~\ref{thm:local_perturbative_bound} and set $L = AR.$

We obtain the simple graph $\Ggood{\C_n}{L}$ from $\C_n$ by retaining only the $L$-good edge copies, recall Definition~\ref{df:bad_edge}.  Below, we argue that 
it suffices to prove the CLT for  $\E_w \W_{\Ggood{\C_n}{L}}$.

By \eqref{eq: L_n_size}, we have 
\begin{equation} \label{eq: var_remove_bad_edges} 
 \Var_{\C_n} \big(\E_w \W_{\C_n} - \E_w \W_{\Ggood{\C_n}{L}} \big ) \leq \E_{\C_n} \big|\cE(\C_n) \setminus \cE(\Ggood{\C_n}{L}) \big|^2 = O(D^{4L}).   
\end{equation}
Recall from \eqref{eq:var_lb_2}, we have $\Var_{\C_n}(\E_w \W_{\C_n}) \ge c n$.
Now since $R \ll \log{n}$,  \eqref{eq: var_remove_bad_edges}  together with this variance lower bound imply that 
\[  \Var_{\C_n}\big(\E_w \W_{\C_n} - \E_w \W_{\Ggood{\C_n}{L}}\big) = o(n) = o(\Var_{\C_n}(\E_w \W_{\C_n}))\]
and consequently, as $n \to \infty$, 
\[ \frac{\E_w \W_{\C_n} - \E \W_{\C_n}}{\sqrt{\Var_{\C_n}(\E_w \W_{\C_n})}} - 
\frac{\E_w \W_{\Ggood{\C_n}{L}} - \E \W_{\Ggood{\C_n}{L}} }{\sqrt{\Var_{\C_n}(\E_w \W_{\C_n})}} \stackrel{p}{\to} 0. \]
On the other hand, using \eqref{eq:sd_triangle}, 
\begin{equation} \label{eq: var_comparison_bad}
\Bigg|\sqrt{\frac{\Var_{\C_n}(\E_w \W_{\Ggood{\C_n}{L}})}{\Var_{\C_n}(\E_w \W_{\C_n})}} - 1\Bigg| \leq \sqrt{\frac{\Var_{\C_n}(\E_w \W_{\C_n} - \E_w \W_{\Ggood{\C_n}{L}})}{\Var_{\C_n}\big(\E_w \W_{\C_n}\big)}} = o(1).
\end{equation}
Hence if 
\begin{equation}\label{eq:conv_R_approx}
   \frac{\E_w \W_{\Ggood{\C_n}{L}} - \E \W_{\Ggood{\C_n}{L}}}{\sqrt{\Var_{\C_n}(\E_w \W_{\Ggood{\C_n}{L}})}} \stackrel{d}{\to} \mathrm N(0,1),
\end{equation}
then so does $(\E_w \W_{\C_n} - \E \W_{\C_n}) / \sqrt{\Var_{\C_n}(\E_w \W_{\C_n})}$ as claimed in Proposition~\ref{lem:C-CLT}.

It remains to prove \eqref{eq:conv_R_approx}. To this end, we further approximate the quenched mean  $\E_w \W_{\Ggood{\C_n}{L}}$  by the local statistic  $\sZ_R(\Ggood{\C_n}{L})$, where   for a finite simple graph $H$, we define (recall \eqref{eq:MWM_local_H})
\[ \sZ_R(H):= \E_w \W_H^{\mathrm{loc}, R} = \sum_{f\in \cE(H)} \E_w\big[
w_f\,\ind_{\{f\in \M_{\bB_f^R(H)}\}}\big].\]

The following result justifies the effectiveness of this approximation.
\begin{prop}\label{eq: var_k_approx}
  Let $ 1 \ll R \ll \log n$. Then  there exists an integer $A \ge 1$ large enough such that as $n \to \infty$, 
 \begin{equation*}
	\Var_{\C_n}\big(\E_w \W_{\Ggood{\C_n}{L}} -  \sZ_R(\Ggood{\C_n}{L})\big) = o(n). 
\end{equation*}
\end{prop}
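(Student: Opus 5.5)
Set $F(G):=\E_w\W_{\Ggood{G}{L}}-\sZ_R(\Ggood{G}{L})$ for $G\in\mathcal{MG}_{n}$. The plan is to apply the Efron--Stein bound of Lemma~\ref{cor: conf_var_bound} to $F$. It then suffices to show
\[
\E_{\C_n}\sum_{e\in\cE(\C_n)}\big(F(\C_n)-F(\C_n\setminus e)\big)^2=o(n),\qquad
\frac1N\E_{\C_n}\sum_{e,e'}\big(F(\C_n\setminus e)-F(\C_n\setminus\{e,e'\})\big)^2=o(n).
\]
As in the proof of Lemma~\ref{lem: var_conc}, I would split each sum according to whether the deleted edge is $L$-good.

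\textbf{Good edges.} Take $e\in\cE(\Ggood{\C_n}{L})$. By Lemma~\ref{lem:delete_e_tree}, $\Ggood{(\C_n\setminus e)}{L}=\Ggood{\C_n}{L}\setminus e$. So $F(\C_n)-F(\C_n\setminus e)$ is exactly the left-hand side of \eqref{eq:local_perturbative_bound} with $G=\Ggood{\C_n}{L}$ and $f=e$. The graph $\Ggood{\C_n}{L}$ is simple, has maximum degree at most $D$, and every one of its edges has an $L=AR$ neighborhood that is a tree, by Definition~\ref{df:bad_edge}. Choosing $A\ge A_0$, Theorem~\ref{thm:local_perturbative_bound} gives $|F(\C_n)-F(\C_n\setminus e)|\le Ce^{-cR}$ uniformly. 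The good-edge contribution is therefore at most $N C^2e^{-2cR}=o(n)$, since $R\to\infty$.

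\textbf{Bad edges.} Take $e\notin\cE(\Ggood{\C_n}{L})$. Deleting $e$ can only change the goodness status of edges in $\bB_e^{L}(\C_n)$, and there are $O(D^{L})$ of them. Each such status change alters $\E_w\W$ by at most $1$. Each change of a ball $\bB_f^R$ alters the corresponding term of $\sZ_R$ by at most $1$, and deleting an edge affects the local terms of only $O(D^{L+R})$ edges. Hence $|F(\C_n)-F(\C_n\setminus e)|=O(D^{2L})$ deterministically. Combined with \eqref{eq: L_n_size}, the bad-edge contribution is $O(D^{4L})\cdot O(D^{2L})=n^{o(1)}$, because $L=AR\ll\log n$.

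\textbf{Second sum.} Here I would repeat the argument with $\C_n$ replaced by $\C_n\setminus e$. The graph $\C_n\setminus e$ is no longer a configuration model, but $e'$ being $L$-good in $\C_n\setminus e$ is all that Lemma~\ref{lem:delete_e_tree} and Theorem~\ref{thm:local_perturbative_bound} require. The pairs $(e,e')$ with $e'$ good in $\C_n\setminus e$ contribute at most $N^2e^{-2cR}/N=o(n)$. For the remaining pairs, $e'$ is bad in $\C_n\setminus e$, which forces $e'$ to be bad in $\C_n$, because deleting edges only preserves trees. The number of such pairs is at most $N\cdot|\cE(\C_n)\setminus\cE(\Ggood{\C_n}{L})|$, and each term is $O(D^{4L})$. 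After dividing by $N$, their contribution is $O(D^{6L})=n^{o(1)}$.

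\textbf{Main obstacle.} The substantive input is the uniform perturbative estimate of Theorem~\ref{thm:local_perturbative_bound}. Given that result, the delicate point is checking its hypotheses after pruning, namely that all good edges of the pruned graph have tree neighborhoods of radius $AR$; Lemma~\ref{lem:delete_e_tree} is what guarantees this. The remaining care is in the deterministic bounds for bad edges, where one must confirm that deleting a single edge only perturbs the good-edge set and the local terms within radius $O(L)$.
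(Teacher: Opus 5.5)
Your proposal is correct and follows the paper's proof essentially step for step. The paper also applies Lemma~\ref{cor: conf_var_bound} and handles $L$-good deleted edges via Lemma~\ref{lem:delete_e_tree} and Theorem~\ref{thm:local_perturbative_bound}; bad deleted edges get the crude deterministic $O(D^{2L})$ bound combined with \eqref{eq: L_n_size}, and the double sum is treated by rerunning the argument on $\C_n\setminus e$, using that deleting an edge cannot make a good edge bad, with the same resulting bounds $O(ne^{-2cR})$, $O(D^{6L})$ (resp.\ $O(n^2e^{-2cR})$, $O(nD^{6L})$).
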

Assuming the above bound, let us proceed to the rest of the proof of Proposition~\ref{lem:C-CLT}. 
From \eqref{eq: var_comparison_bad} and \eqref{eq:var_lb_2}, it follows that 
\begin{equation} \label{var_Gn_ub_lb_linear}
  \Var_{\C_n} (\E_w \W_{\Ggood{\C_n}{L}}) \ge cn,
\end{equation}
for some positive constant $c$.
By Proposition~\ref{eq: var_k_approx} and \eqref{var_Gn_ub_lb_linear}, we deduce that
\begin{equation}
	\label{eq: k_approx_close}
\frac{\E_w \W_{\Ggood{\C_n}{L}} - \E \W_{\Ggood{\C_n}{L}}}{\sqrt{\Var_{\C_n} (\E_w \W_{\Ggood{\C_n}{L}}) }} - \frac{\sZ_R(\Ggood{\C_n}{L})- \E_{\C_n}  \sZ_R(\Ggood{\C_n}{L}) }{\sqrt{\Var_{\C_n} (\E_w \W_{\Ggood{\C_n}{L}}) }} \stackrel{p}{\rightarrow} 0 \text{ as $n\to\infty$.}
\end{equation}
We can write
\begin{align}
	\label{eq: R_appro_decomposition}
	\frac{\sZ_R(\Ggood{\C_n}{L}) - \E_{\C_n}  \sZ_R(\Ggood{\C_n}{L}) }{\sqrt{\Var_{\C_n} (\E_w \W_{\Ggood{\C_n}{L}}) }}  &= \frac{\sZ_R(\Ggood{\C_n}{L}) - \E_{\C_n}  \sZ_R(\Ggood{\C_n}{L}) }{\sqrt{\Var_{\C_n}(\sZ_R(\Ggood{\C_n}{L}))}}  \cdot \sqrt{\frac{\Var_{\C_n}(\sZ_R(\Ggood{\C_n}{L}))}{\Var_{\C_n} (\E_w \W_{\Ggood{\C_n}{L}}) }}.
\end{align}
By  Proposition~\ref{eq: var_k_approx} and \eqref{eq:sd_triangle}, we obtain
\begin{align}
	\label{eq: popcorn}
	\big|\sqrt{\Var_{\C_n}(\sZ_R(\Ggood{\C_n}{L}))} - \sqrt{\Var_{\C_n}(\E_w \W_{\Ggood{\C_n}{L}})}\big| \leq \sqrt{\Var_{\C_n}(\sZ_R(\Ggood{\C_n}{L}) - \E_w \W_{\Ggood{\C_n}{L}})} = o(\sqrt{n}).
\end{align}
Together, \eqref{var_Gn_ub_lb_linear} and  \eqref{eq: popcorn} imply
\[ \Var_{\C_n}(\sZ_R(\Ggood{\C_n}{L})) = (1+ o(1))\Var_{\C_n}(\E_w \W_{\Ggood{\C_n}{L}}),\]
and as a result, from \eqref{eq: R_appro_decomposition} we have 
\[	\frac{\sZ_R(\Ggood{\C_n}{L}) - \E_{\C_n}  \sZ_R(\Ggood{\C_n}{L})}{\sqrt{\Var_{\C_n} (\E_w \W_{\Ggood{\C_n}{L}})}}   = (1 + o(1)) \frac{ \sZ_R(\Ggood{\C_n}{L}) - \E_{\C_n}  \sZ_R(\Ggood{\C_n}{L}) }{\sqrt{\Var_{\C_n}(\sZ_R(\Ggood{\C_n}{L}))}} .\]
This, coupled with \eqref{eq: k_approx_close}, asserts that to prove the CLT for $\E_w \W_{\Ggood{\C_n}{L}}$ given in \eqref{eq:conv_R_approx}, it suffices to establish  the CLT for $\sZ_R(\Ggood{\C_n}{L})$ as the following
\begin{equation}\label{CLT_local_approx}
\frac{ \sZ_R(\Ggood{\C_n}{L}) - \E_{\C_n}  \sZ_R(\Ggood{\C_n}{L}) }{ \sqrt{\Var_{\C_n}(\sZ_R(\Ggood{\C_n}{L}))} }
\stackrel{d}{\to} \mathrm N(0,1).
\end{equation}
The advantage of considering $\sZ_R(\Ggood{\C_n}{L})$ instead of
$\E_w\W_{\Ggood{\C_n}{L}}$ is that the former can be written as a local
graph statistic, allowing us to apply Theorem~\ref{thm: BarbourRollin}.
Indeed, setting $k:=L+R+1$, we may write in the form of \eqref{eq:local_stat_rep}
\[ \sZ_R(\Ggood{\C_n}{L}) =
\sum_{v\in V(\C_n)} h\bigl(\bB_v^k(\C_n),v\bigr), \]
where, for a finite rooted multigraph $(H,o)$,
\[ h(H,o) := \frac12 \sum_{f\in\cE(H): o\in\partial_H(f)} \ind_{\{f\in\cE(\Ggood{H}{L})\}}
\E_w\big[ w_f \ind_{\{f\in\M_{\bB_f^R(\Ggood{H}{L})}\}} \big]. \]
Thus, $h\bigl(\bB_v^k(\C_n),v\bigr)$ depends only on the
$k$-neighborhood of $v$ in $\C_n$.

Since $R=o(\log n)$ and $L=AR$, we have $k=L+R+1\le a\log n$
for all sufficiently large $n$, where $a>0$ is the constant appearing in
Theorem~\ref{thm: BarbourRollin}. Moreover, $\|h\|_\infty\le D/2$. By
Proposition~\ref{eq: var_k_approx} and \eqref{var_Gn_ub_lb_linear}, for some $c' > 0$,
\[ \Var_{\C_n}\bigl(\sZ_R(\Ggood{\C_n}{L})\bigr) \ge c'n. \]
Therefore, Theorem~\ref{thm: BarbourRollin} yields
\[ d_{\mathrm W}\Big( \mathcal L\Big( \frac{
\sZ_R(\Ggood{\C_n}{L}) - \E_{\C_n}\sZ_R(\Ggood{\C_n}{L})}{ \sqrt{ \Var_{\C_n}\bigl(\sZ_R(\Ggood{\C_n}{L})\bigr)}} \Big),
\mathrm N(0,1)\Big) \le C\frac{D^{Ck}}{\sqrt n} =o(1). \]
This proves \eqref{CLT_local_approx}. It remains only to prove
Proposition~\ref{eq: var_k_approx}.

\begin{proof}[Proof of Proposition~\ref{eq: var_k_approx}]
Apply Lemma \ref{cor: conf_var_bound} to the function
\[ f(G) := \E_w\W_{\Ggood{G}{L}} - \sZ_R(\Ggood{G}{L}),
\qquad G\in\mathcal{MG}_{n,D}. \]
Since $N\asymp n$, it suffices to show that
\begin{equation} \label{eq: var_diff_bound_a1}
\E_{\C_n} \sum_{e\in\cE(\C_n)} \big[
f(\C_n)-f(\C_n\setminus e) \big]^2 =o(n)
\end{equation}
and
\begin{equation} \label{eq: var_diff_bound_a2}
\E_{\C_n} \sum_{e,e'\in\cE(\C_n)}
\big[ f(\C_n\setminus e) - f(\C_n\setminus\{e,e'\}) \big]^2 =o(n^2).
\end{equation}

We first prove \eqref{eq: var_diff_bound_a1}, splitting the sum according
to whether $e\in\cE(\Ggood{\C_n}{L})$.

Suppose first that $e\in\cE(\Ggood{\C_n}{L})$. By
Lemma~\ref{lem:delete_e_tree},
\[ \Ggood{(\C_n\setminus e)}{L} = \Ggood{\C_n}{L}\setminus e. \]
Therefore,
\begin{align*}
f(\C_n)-f(\C_n\setminus e) = \big( \E_w\W_{\Ggood{\C_n}{L}} -
\E_w\W_{\Ggood{\C_n}{L}\setminus e} \big) - \big( \sZ_R(\Ggood{\C_n}{L})
- \sZ_R(\Ggood{\C_n}{L}\setminus e) \big).
\end{align*}
The graph $\Ggood{\C_n}{L}$ is simple, and
$\bB_f^L(\Ggood{\C_n}{L})$ is a tree for every
$f\in\cE(\Ggood{\C_n}{L})$. Since $L=AR$,
Theorem~\ref{thm:local_perturbative_bound} gives
\[ |f(\C_n)-f(\C_n\setminus e)| \le Ce^{-cR}. \]
Consequently,
\begin{equation}
\label{eq:efron_stein_term1_good}
\sum_{e\in\cE(\Ggood{\C_n}{L})}
\bigl[f(\C_n)-f(\C_n\setminus e)\bigr]^2 \le Cne^{-2cR}
= o(n).
\end{equation}
Now suppose that $e\notin\cE(\Ggood{\C_n}{L})$. We use the crude bounds
\[ \big| \E_w\W_{\Ggood{\C_n}{L}} - \E_w\W_{\Ggood{(\C_n\setminus e)}{L}} \big|
\le CD^L, \]
\[ \big| \sZ_R(\Ggood{\C_n}{L}) - \sZ_R(\Ggood{(\C_n\setminus e)}{L}) \big|
\le CD^{L+R} \le CD^{2L}. \]
Indeed, deleting $e$ can change the $L$-good status only of edge copies
whose $L$-neighborhood contains $e$, and there are at most $CD^L$ such
edge copies. Each such change can affect at most $CD^R$ terms in the
local statistic $\sZ_R$.

Thus,
\[ |f(\C_n)-f(\C_n\setminus e)|^2 \le CD^{4L}. \]
Using \eqref{eq: L_n_size}, we obtain
\begin{align}\label{eq:efron_stein_term1_bad}
\E_{\C_n} \sum_{e\notin\cE(\Ggood{\C_n}{L})} \bigl[f(\C_n)-f(\C_n\setminus e)\bigr]^2 \le
CD^{4L} \E_{\C_n} \big| \cE(\C_n)\setminus\cE(\Ggood{\C_n}{L}) \big| 
=O(D^{6L}) = o(n).
\end{align}
Combining \eqref{eq:efron_stein_term1_good} and
\eqref{eq:efron_stein_term1_bad} proves
\eqref{eq: var_diff_bound_a1}.

The proof of \eqref{eq: var_diff_bound_a2} is analogous. Fix
$e\in\cE(\C_n)$ and apply the preceding argument to $\C_n\setminus e$.
The term corresponding to $e'=e$ is zero. Uniformly over $e$, the
contribution from $e'\in
\cE\bigl(\Ggood{(\C_n\setminus e)}{L}\bigr)$
is at most $Cne^{-2cR}=o(n)$. For the remaining edge copies, the crude
bound above gives
\[ CD^{4L} \big| \cE(\C_n\setminus e) \setminus \cE\bigl(\Ggood{(\C_n\setminus e)}{L}\bigr)
\big|.\]
Deleting an edge cannot turn an $L$-good edge into an $L$-bad edge, and
hence
\[\big| \cE(\C_n\setminus e) \setminus \cE\bigl(\Ggood{(\C_n\setminus e)}{L}\bigr)
\big| \le \big| \cE(\C_n) \setminus \cE(\Ggood{\C_n}{L}) \big|.\]
Summing over $e$ and using \eqref{eq: L_n_size}, the total contribution
of the $L$-bad edges is $O(nD^{6L}) =o(n^2)$,
while the contribution of the $L$-good edges is $O(n^2e^{-2cR}) =o(n^2)$.
This proves \eqref{eq: var_diff_bound_a2} and completes the proof.
\end{proof}

\section{Proofs of Lemma~\ref{lemma:near_delete_edge_tail}, Propositions~\ref{prop:annular_delete_edge} and \ref{prop:far_delete_edge_tail}}
\label{sec: local_approximation}
Throughout this section, all graphs are finite, simple, and deterministic, and
all probabilities and expectations are taken with respect to the random edge
weights. To simplify notation, we write $\prob$ and $\E$ in place of
$\prob_w$ and $\E_w$, respectively. We begin with proving Lemma~\ref{lemma:near_delete_edge_tail}.
\subsection{Proof of Lemma~\ref{lemma:near_delete_edge_tail}}
We claim that there exist positive constants
$c_1$ and $C_1$, depending only on $D$, such that whenever
$\bB_e^r(H)$ is a tree with $r\ge 1$,
\begin{equation} \label{eq:weighted_edge_localization}
| \E w_e X_e(H) - \E w_e X_e^r(H)  | \le C_1e^{-c_1r},
\end{equation}
where we recall that  $X_e^r(H) = X_e(\bB_e^r(H)).$
By Theorem~\ref{thm:vertex_bonus}, 
$\prob(
X_e(H)\ne X_e^r(H)) \le Ce^{-cr}$,
where the constants only depend on $D$. Therefore, by Cauchy-Schwarz,
\[ \begin{aligned}
\big| \E w_e X_e(H) - \E w_e X_e^r(H)  \big| 
&\le \E\big[ w_e|X_e(H)-X_e^r(H)| \big] \\
&\le (\E w_e^2)^{1/2} \big( \prob(X_e(H)\ne X_e^r(H) )
\big)^{1/2} \le \sqrt{2C} e^{-cr/2}.
\end{aligned}
\]
Renaming constants gives \eqref{eq:weighted_edge_localization}.

We now prove the lemma. Choose $\delta_0>0$ small enough that $\delta_0\le \tfrac16$ and  $\delta_0\log D<\tfrac{c_1}{8}$.
Fix $0<\delta\le\delta_0$. Take $r:=\lfloor (1-2\delta)R\rfloor$.
For large $R$ we have $r\ge 1$. If $d(e,f)\le\delta R$, then
\[ \bB_e^r(G)\subseteq \bB_f^R(G). \]
Indeed, if $v\in V(\bB_e^r(G))$, then
$d_G(v,e)\le r$, and hence $d_G(v,f)\le r+d(e,f)\le (1-\delta)R<R$.
Thus $v\in V(\bB_f^R(G))$. Since $\bB_f^R(G)$ is a tree, it follows
that $\bB_e^r(G)$ is also a tree.

Let $H_e:=\bB_e^R(G).$
Since $r\le R$, we have $\bB_e^r(H_e)=\bB_e^r(G)$.
Therefore \eqref{eq:weighted_edge_localization}, applied first to $H=G$ and
then to $H=H_e$, gives
\[ \left| \E w_e X_e(G) - \E w_e X_e^R(G) \right|
\le C_1e^{-c_1r}+C_1e^{-c_1r} 
\le C_2e^{-c_1r}.
\]
For large $R$, since $\delta\le1/6$, $r\ge (1-3\delta)R\ge R / 2$.
Hence
\begin{equation}\label{eq:per_edge_near_bound}
  \left| \E w_e X_e(G) - \E w_e X_e^R(G) \right| \le C_2e^{-c_1R/2}.  
\end{equation}
It remains to sum over the near edges. Since $G$ has maximum degree at most
$D$, $\#\{e:d(e,f)\le \delta R\} \le C_DD^{\delta R}$.
Using \eqref{eq:per_edge_near_bound},
\[\Big| \sum_{e:\,d(e,f)\le \delta R}
\big( \E w_e X_e(G) - \E w_e X_e^R(G) \big) \Big|
\le C_DD^{\delta R} C_2e^{-c_1R/2}  
= C_3 \exp\left\{ -\left(\frac{c_1}{2}-\delta\log D\right)R \right\}.\]
By the choice of $\delta_0$, $\tfrac{c_1}{2}-\delta\log D
\ge \tfrac{3c_1}{8}>0.$
Thus the last display is bounded by $Ce^{-cR}$ for some $c>0$ depending only
on $\delta$ and $D$. This proves the claim for large $R$. Enlarging $C$ handles
the remaining finite range of $R$. This concludes the proof of Lemma~\ref{lemma:near_delete_edge_tail}.

Before proving Propositions~\ref{prop:annular_delete_edge} and \ref{prop:far_delete_edge_tail}, we introduce the following important definition. 

\begin{df}[Alternating path]
Let $G$ be a finite weighted simple graph whose maximum weight matching is unique, and let
$f\in E(G)$. Define $\I_f(G)$ to be the connected component of
$\M_G\triangle \M_{G\setminus f}$ that contains $f$, with the convention that
$\I_f(G)=\emptyset$ if $f\notin \M_G\triangle \M_{G\setminus f}$. 
\end{df}
Since the symmetric
 difference of two matchings is a disjoint union of alternating paths and alternating cycles, $\I_f(G)$ is either empty, an alternating path, or an alternating cycle.

We will repeatedly use the following consequence of uniqueness. Since the
edge weights are continuous, almost surely,
\begin{equation}
\label{eq:symmetric_difference_equals_If}
\M_G\triangle\M_{G\setminus f} = \I_f(G).
\end{equation}
Indeed, suppose that $\mathcal C$ is a connected component of
$\M_G\triangle\M_{G\setminus f}$ that does not contain $f$. Since
$f\notin E(\mathcal C)$, exchanging the two alternating edge sets on
$\mathcal C$ preserves feasibility both in $G$ and in $G\setminus f$.
If the total weight of $\M_G\cap E(\mathcal C)$ is larger than that of
$\M_{G\setminus f}\cap E(\mathcal C)$, this exchange produces a matching
of $G\setminus f$ heavier than $\M_{G\setminus f}$. The reverse strict
inequality similarly contradicts the optimality of $\M_G$. If the two
weights are equal, the exchange produces a second maximum weight matching,
contrary to uniqueness. Thus no such component $\mathcal C$ exists,
which proves \eqref{eq:symmetric_difference_equals_If}.

\begin{lem}[Probability bound for a specific alternating path on a tree]
\label{lem:product_upper_bound_tree}
Let $G$ be a finite tree, and let $e,f\in E(G)$ be distinct edges. Write
\[ (x_{-1} x_0) = e_0=f\sim e_1 = (x_0 x_1) \sim \cdots \sim (x_{r-1} x_r) = e_r=e\]
for the unique edge path from $f$ to $e$.

Delete $f$ from $G$, and let $T$ be the component of $G\setminus f$ that contains $e$.
Root $T$ at the endpoint $x_0$ of $e_0$ that is closer to $e$. 
For each $0\le j\le r-1$, let $h_{j,1}=e_{j+1}=(x_jx_{j+1}),$ and $y_{j,1}:=x_{j+1},$
and write the remaining child edges of $x_j$ in $T$ as $h_{j,\ell}=(x_jy_{j,\ell})$ for $2\le \ell\le m_j$, where $m_j$ is the number of children of $x_j$.
For $u\in V(T)$, let $T_u$ denote the descendant subtree of $T$ rooted at $u$. Set
\[\B_{j,\ell}:=\B(y_{j,\ell},T_{y_{j,\ell}}), \qquad
\xi_{j,\ell}:=\ind_{\{w_{h_{j,\ell}}>\B_{j,\ell}\}}, \]
and define $\xi_j:=\xi_{j,1}$ and $Z_j:=\xi_{j,2} + \xi_{j,3} + \cdots+\xi_{j,m_j}.$
Then
\[ \prob\bigl( e\in \I_f(G) \bigr) \le \E\Bigl[ e^{-\B(x_0,T)} \prod_{j=0}^{r-1}\frac{\xi_j}{1+Z_j} \Bigr]. \]
\end{lem}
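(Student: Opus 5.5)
The plan is to first describe the event $\{e\in\I_f(G)\}$ exactly through the cavity structure of the tree, and then compute its probability by exploiting memorylessness level by level along the path $x_0,x_1,\ldots,x_r$.

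\emph{Step 1 (structure of $\M_G\triangle\M_{G\setminus f}$).} If $f\notin\M_G$, then $\M_G=\M_{G\setminus f}$ and $\I_f(G)=\emptyset$. So on the event we may assume $f\in\M_G$. Let $S$ be the other component of $G\setminus f$. Then $f\in\M_G$ forces
\[
w_f>\B(x_{-1},S)+\B(x_0,T)\ge \B(x_0,T),
\]
by \eqref{eq:edge_bonus_vertex_bonus}. On the $T$-side, $\M_G$ restricts to $\M_{T\setminus x_0}$, while $\M_{G\setminus f}$ restricts to $\M_T$. In $\M_T$, the root $x_0$ is matched to the child $y_{0,\ell}$ maximizing $w_{h_{0,\ell}}-\B_{0,\ell}$, provided this maximum is positive; this is the cavity recursion \eqref{eq:bonus_cavity_recursion} applied on the rooted tree. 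If $x_0$ is matched to $y$, the alternating component continues into $T_y$. There $\M_T$ uses $T_y\setminus y$ and $\M_{T\setminus x_0}$ uses $T_y$, and the same argument repeats one level down. Inducting along the path, I will show that $e=e_r\in\I_f(G)$ forces, for every $0\le j\le r-1$, that $\xi_j=1$ and that $h_{j,1}=e_{j+1}$ is the (a.s.\ unique) maximizer of $w_{h_{j,\ell}}-\B_{j,\ell}$ among the children of $x_j$. Hence
\[
\{e\in\I_f(G)\}\subseteq\{w_f>\B(x_0,T)\}\cap\bigcap_{j=0}^{r-1}\mathcal E_j,\qquad
\mathcal E_j:=\{\xi_j=1,\ \text{$e_{j+1}$ is the argmax at $x_j$}\}.
\]

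\emph{Step 2 (integrating out $w_f$).} The weight $w_f$ is independent of all the weights in $T$. Conditioning on the weights in $T$ therefore gives the factor $\prob(w_f>\B(x_0,T)\mid T)=e^{-\B(x_0,T)}$. This reduces the claim to
\[
\E\Bigl[e^{-\B(x_0,T)}\prod_{j}\ind_{\mathcal E_j}\Bigr]\le \E\Bigl[e^{-\B(x_0,T)}\prod_j\frac{\xi_j}{1+Z_j}\Bigr].
\]

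\emph{Step 3 (memorylessness and argmax/max independence).} Condition on the $\sigma$-field $\mathcal F$ generated by all off-path subtrees $T_{y_{j,\ell}}$ with $\ell\ge2$, the subtree $T_{x_r}$, and all indicators $\xi_{j,\ell}$. Given $\mathcal F$, the quantities $\B_{j,\ell}$ for $\ell\ge2$ are fixed. Since $\B_{j,\ell}$ depends only on weights strictly below $h_{j,\ell}$, memorylessness shows that the excesses $w_{h_{j,\ell}}-\B_{j,\ell}$ over the indices with $\xi_{j,\ell}=1$ are i.i.d.\ $\mathrm{Exp}(1)$. This requires processing from $j=r-1$ upward: the on-path bonus $\B_{j,1}=\B(x_{j+1},T_{x_{j+1}})$ is itself the positive part of the maximal excess at level $j+1$. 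The key fact I will use is that for i.i.d.\ exponentials, the index of the maximum is independent of the value of the maximum. Consequently each argmax event at level $j$ is conditionally independent of all maxima, with conditional probability $\xi_j/(1+Z_j)$. All the on-path bonuses, and hence $\B(x_0,T)$, are functions of $\mathcal F$ and of these maxima only. Therefore the indicators $\ind_{\mathcal E_j}$ can be replaced by $\xi_j/(1+Z_j)$ inside the expectation. Each of these factors is $\mathcal F$-measurable, which gives the lemma.

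I expect the main obstacle to be Step 3. The bookkeeping must show precisely that $\B(x_0,T)$ and the chain of $\B_{j,1}$'s depend on the level-$j$ excesses only through their maxima. The independence must also hold jointly across levels, even though the maximum at level $j+1$ feeds into the threshold $\B_{j,1}$ at level $j$. The approach I would try first is a backward induction on $j$ that conditions on $\mathcal F$ together with the maxima at levels $>j$. A secondary check is Step 1: one must verify that the alternating component cannot leave the path between $f$ and $e$ and later return to reach $e$. This follows because $T$ is a tree, so the component is a path descending from $x_0$.
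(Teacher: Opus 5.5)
Your proposal is correct and follows essentially the same route as the paper: the same event inclusion via the cavity recursion at each $x_j$, then integrating out $w_f$ to produce $e^{-\B(x_0,T)}$, then replacing each argmax indicator by $\xi_j/(1+Z_j)$. The last step uses memorylessness and the fact that everything upstream depends on $T_{x_j}$ only through the maximal residual $\B(x_j,T_{x_j})$.

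The paper handles the cross-level bookkeeping you flag by conditioning one level at a time, for $s=r-1,\ldots,0$, on a $\sigma$-field $\cF_s$. This $\sigma$-field reveals all weights off the child edges of $x_s$, together with the active set and the unordered multiset of positive residuals there, so exchangeability of the labels is immediate at each step. With your single $\mathcal F$ containing all the $\xi_{j,\ell}$, the active residuals at a level are only exchangeable, not i.i.d.\ $\mathrm{Exp}(1)$, since $\xi_{j-1,1}$ depends on their maximum. Exchangeability is all your argument actually uses, so this does not affect correctness.
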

\begin{figure}[ht]
    \centering
    \begin{tikzpicture}[
        line cap=round,
        v/.style={circle,fill=black,inner sep=1.45pt},
        edge/.style={line width=0.85pt},
        faint/.style={line width=0.65pt,gray!65},
        blueedge/.style={edge,draw=blue!55!cyan},
        rededge/.style={edge,draw=red!60!orange},
        reddash/.style={
            line width=0.85pt,
            densely dashed,
            draw=red!60!orange
        },
        lab/.style={font=\scriptsize,inner sep=1pt},
        elab/.style={font=\scriptsize,fill=white,inner sep=1pt}
    ]

    \pgfmathsetmacro{\L}{1.90}   
    \def\A{195}                  

    \coordinate (x0)   at (0,0);
    \coordinate (xm1)  at ($(x0)+(0:\L)$);

    \coordinate (x1)   at ($(x0)+(\A:\L)$);
    \coordinate (x2)   at ($(x1)+(\A:\L)$);
    \coordinate (xj)   at ($(x2)+(\A:\L)$);
    \coordinate (xjp1) at ($(xj)+(\A:\L)$);
    \coordinate (xrm1) at ($(xjp1)+(\A:\L)$);
    \coordinate (xr)   at ($(xrm1)+(\A:\L)$);

    \coordinate (yj2) at ($(xj)+(255:1.00)$);
    \coordinate (yj3) at ($(xj)+(305:1.00)$);

    \coordinate (tma) at ($(xm1)+(35:0.78)$);
    \coordinate (tmb) at ($(xm1)+(-35:0.78)$);

    \coordinate (y21) at ($(yj2)+(230:0.52)$);
    \coordinate (y22) at ($(yj2)+(285:0.52)$);
    \coordinate (y31) at ($(yj3)+(255:0.52)$);
    \coordinate (y32) at ($(yj3)+(325:0.52)$);

    \coordinate (xra) at ($(xr)+(145:0.78)$);
    \coordinate (xrb) at ($(xr)+(215:0.78)$);

    \draw[blueedge] (x0) -- (xm1);
    \node[elab,above=2pt] at ($(x0)!0.50!(xm1)$) {$f=e_0$};

    \draw[faint] (xm1) -- (tma);
    \draw[faint] (xm1) -- (tmb);
    \node[lab,right=5pt] at (xm1) {$T_-$};

    \draw[rededge] (x0) -- (x1);
    \node[elab,above=3pt] at ($(x0)!0.50!(x1)$) {$e_1$};

    \draw[blueedge] (x1) -- (x2);
    \node[elab,below=3pt] at ($(x1)!0.50!(x2)$) {$e_2$};

    \draw[reddash] (x2) -- (xj);

 \draw[blueedge] (xj) -- (xjp1);
\node[elab,above=3pt,xshift=-5pt,yshift=2pt]
    at ($(xj)!0.50!(xjp1)$)
    {$e_{j+1}=h_{j,1}$};

    \draw[reddash] (xjp1) -- (xrm1);

    \draw[blueedge] (xrm1) -- (xr);
    \node[elab,above=4pt] at ($(xrm1)!0.50!(xr)$) {$e_r=e$};

    \draw[faint] (xr) -- (xra);
    \draw[faint] (xr) -- (xrb);

\draw[edge] (xj) -- (yj2);
\node[elab,left=1pt,xshift=-2pt]
    at ($(xj)!0.55!(yj2)$)
    {$h_{j,2}$};

\draw[edge] (xj) -- (yj3);
\node[elab,right=1pt,xshift=3pt]
    at ($(xj)!0.55!(yj3)$)
    {$h_{j,3}$};

    \draw[faint] (yj2) -- (y21);
    \draw[faint] (yj2) -- (y22);
    \draw[faint] (yj3) -- (y31);
    \draw[faint] (yj3) -- (y32);

    \node[v] at (x0) {};
    \node[lab,above=5pt] at (x0) {$x_0$};

    \node[v] at (xm1) {};
    \node[lab,below=5pt] at (xm1) {$x_{-1}$};

    \node[v] at (x1) {};
    \node[lab,below=5pt] at (x1) {$x_1$};

    \node[v] at (x2) {};
    \node[lab,above=5pt] at (x2) {$x_2$};

    \node[v] at (xj) {};
    \node[lab,above=5pt] at (xj) {$x_j$};

    \node[v] at (xjp1) {};
    \node[lab,below=6pt] at (xjp1) {$x_{j+1}=y_{j,1}$};

    \node[v] at (xrm1) {};
    \node[lab,below=5pt] at (xrm1) {$x_{r-1}$};

    \node[v] at (xr) {};
    \node[lab,above=5pt] at (xr) {$x_r$};

    \node[v] at (yj2) {};
    \node[lab,left=3pt] at (yj2) {$y_{j,2}$};

    \node[v] at (yj3) {};
    \node[lab,right=3pt] at (yj3) {$y_{j,3}$};

    \end{tikzpicture}
    \caption{Alternating path. Along the path, the displayed edges
    alternate between $\M_G$ (blue) and $\M_{G\setminus f}$ (red).}
    \label{fig:influential_path}
\end{figure}

\begin{proof}
Let $T_-$ be the other component of $G\setminus f $, rooted at the other
endpoint $x_{-1}$ of $f$.  Since $G\setminus f $ is the disjoint union of $T$ and $T_-$, the restriction of $\M_{G\setminus f }$ to $T$ is $\M_{T}$. In particular,
 $\{e\in \M_{G\setminus f }\} = \{e\in \M_{T}\}. $

 On the event
$e\in  \I_f(G)$, the component of $\M_G \triangle \M_{G\setminus f}$ containing $f$ is
nonempty. Since $f\notin E(G\setminus f)$, this implies $f\in \M_G$. 

Moreover, by the definition of $\I_f(G)$, on the event
$e\in\I_f(G)$, we have  edges between $f$ and $e$ belong to $\M_G $ and $\M_{G\setminus f}$ alternately, i.e., $e_0, e_2, e_4, \ldots \in \M_G $ and $e_1, e_3, e_5, \ldots \in \M_{G \setminus f}$.

By the
tree recursion at the two endpoints of $f=e_0$,
\[ w_f>\B(x_0,T)+\B(x_{-1},T_{-}) . \]
(See \eqref{eq:edge_MWM_rep}.) Fix $0\le j\le r-1$. Let $\mathsf{N}$ be the one of the two matchings
$\M_G$ and $\M_{G\setminus f}$ that contains $e_{j+1}$. Since the path is
alternating, $\mathsf{N}$ does not contain the adjacent edge $e_j=(x_{j-1}x_j)$. Thus, in $\mathsf{N}$, the vertex $x_j$ is matched downward to $x_{j+1}$.

The restriction of $\mathsf N$ to the descendant subtree $T_{x_j}$ must be
optimal given the boundary condition that $x_j$ is not matched to its parent.
Otherwise, replacing this restriction by a better matching inside $T_{x_j}$
would strictly improve the corresponding global maximum matching.

Define
\[ Y_{j,\ell}:=(w_{h_{j,\ell}}-\B_{j,\ell})_+ . \]
By the bonus recursion \eqref{eq:bonus_cavity_recursion} at the root $x_j$ of $T_{x_j}$, the optimal downward
choice is a child edge maximizing $Y_{j,\ell}$. 
Since $\mathsf{N}$ uses $h_{j,1}=e_{j+1}$, uniqueness of the maximum
weight matching implies that $Y_{j,1}>0$ and $Y_{j,1}>Y_{j,\ell}$ for $2\le \ell\le m_j.$
Hence, $A_j:= \big \{ Y_{j,1}>\max_{2\le \ell\le m_j} Y_{j,\ell} \big\}$
holds, where the maximum over an empty set is interpreted as $0$.

So, we deduce that 
\[ \{e\in \I_f(G) \} \subseteq \{w_f> \B(x_0,T)+\B(x_{-1},T_{-}) \}\cap \bigcap_{j=0}^{r-1}A_j . \]
Conditioning on all weights except $w_f$ gives
\begin{equation*}
\prob\bigl(e\in \I_f(G) \bigr)
\le \E\Big[ e^{-\B(x_0,T)-\B(x_{-1},T_{-})}; \bigcap_{j=0}^{r-1}A_j \Big]
\le \E\Big[ e^{-\B(x_0,T)}; \bigcap_{j=0}^{r-1}A_j \Big].
\end{equation*}
Fix $s\in\{0,\ldots,r-1\}$. Let $\cF_s$ be the $\sigma$-field generated by all edge weights off
the child edges of $x_s$, together with the following partial information about
the edge weights on the child edges of $x_s$: we reveal the active set
\[ \{\ell:w_{h_{s,\ell}}>\B_{s,\ell}\}, \]
and the unordered multiset of positive residuals
\[ w_{h_{s,\ell}}-\B_{s,\ell}, \qquad
\ell:w_{h_{s,\ell}}>\B_{s,\ell}. \]
By the memoryless property of the exponential distribution, conditional on
$\cF_s$, the labels of these positive residuals are exchangeable. Hence the
label of the largest positive residual is uniform over the active labels.
Therefore
\[\prob(A_s\mid \cF_s)=\frac{\xi_s}{1+Z_s}. \]
Indeed, if $h_{s,1}$ is not active, both sides are zero, while if $h_{s,1}$ is
active, the number of active labels is $1+Z_s$. Moreover,
\[ e^{-\B(x_0,T)} \ind_{\bigcap_{j=0}^{s-1}A_j} \prod_{j=s+1}^{r-1}\frac{\xi_j}{1+Z_j}\]
is $\cF_s$-measurable. Indeed, for $j>s$, the quantities $\xi_j$ and $Z_j$
depend only on child edges of $x_j$ and on descendant subtrees below those
children, which are already included in $\cF_s$.

It remains to check the factors involving the part above $x_s$. By iterating
the bonus recursion from $x_s$ up to $x_0$, the quantities $\B(x_0,T)$ and
$A_0,\ldots,A_{s-1}$ depend on the subtree $T_{x_s}$ only through the single
value
\[ \B(x_s,T_{x_s}) = \max_{\ell}
(w_{h_{s,\ell}}-\B_{s,\ell})_+ . \]
This value is determined by the active set together with the unordered multiset
of positive residuals revealed in $\cF_s$: if the active set is empty, it is
$0$, while otherwise it is the largest element of that unordered multiset.
Therefore the displayed random variable is $\cF_s$-measurable. So,
\[ \E\Big[ e^{-\B(x_0,T)} \ind_{\bigcap_{j=0}^{s-1}A_j} \ind_{A_s}
\prod_{j=s+1}^{r-1}\frac{\xi_j}{1+Z_j} \Big]
= \E\Big[ e^{-\B(x_0,T)} \ind_{\bigcap_{j=0}^{s-1}A_j} \frac{\xi_s}{1+Z_s}
\prod_{j=s+1}^{r-1}\frac{\xi_j}{1+Z_j} \Big]. \]
Applying this identity successively for $s=r-1,r-2,\ldots,0$ yields
\[ \E\left[ e^{-\B(x_0,T)}\ind_{\bigcap_{j=0}^{r-1}A_j} \right]
= \E\Bigl[ e^{-\B(x_0,T)} \prod_{j=0}^{r-1} \frac{\xi_j}{1+Z_j} \Bigr], \]
which proves the lemma.
\end{proof}
\begin{lem}[Exponential tail on the length of alternating path on a tree]
\label{lem:tree_boundary_tail}
Let $G$ be a finite  tree with maximum degree at most $D$.  Then there exist constants
$C<\infty$ and $\eta\in(0,1)$, depending only on $D$, such that for every $f\in E(G)$ and
every integer $R\ge 4$,
\[ \prob\bigl(\I_f(G)\textnormal{ reaches }\partial \bB_f^R(G)\bigr)
\le C\eta^R . \]
\end{lem}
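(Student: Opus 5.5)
The plan is to reduce to Lemma~\ref{lem:product_upper_bound_tree} via a union bound over the edges at distance $R$ (or $R-1$) from $f$, and then to show that the expected product in that lemma decays exponentially in $r$ uniformly over trees of degree at most $D$.

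If $\I_f(G)$ reaches $\partial\bB_f^R(G)$, then, being a connected alternating path containing $f$, it contains some edge $e$ with $d(e,f)=r$ for $r\in\{R-1,R\}$. Since the tree has at most $D^{R+1}$ such edges, Lemma~\ref{lem:product_upper_bound_tree} gives
\[
\prob\bigl(\I_f(G)\text{ reaches }\partial\bB_f^R(G)\bigr)\le \sum_{e:\,d(e,f)\in\{R-1,R\}}\E\Bigl[\prod_{j=0}^{r-1}\frac{\xi_j}{1+Z_j}\Bigr],
\]
after dropping $e^{-\B(x_0,T)}\le 1$. The whole difficulty is that a single path's probability must beat the entropy $D^R$.

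The cleaner route is therefore to avoid a raw union bound. Summing over all edges $e$ at depth $r$ in $T$ is done recursively. Let $S_r(u)$ denote the sum, over downward paths of length $r$ from $u$, of $\prod\xi_j/(1+Z_j)$. At each vertex $x_j$, the sum over the child $\ell$ chosen as $h_{j,1}$ of $\xi_{j,\ell}/(1+\sum_{\ell'\neq\ell}\xi_{j,\ell'})$ equals $\ind\{\text{some child active}\}\le 1$. This exactly cancels branching: summed over all paths, the product telescopes into a quantity bounded by $1$ per level. That alone gives only $O(1)$, not decay.

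To get decay, I would extract a uniform factor $\eta<1$ per two levels from the structure. At vertex $x_j$ the relevant factor counts active children, and the memoryless and exchangeability argument conditions on the subtree information. Concretely, I would show that conditionally on $\cF$ generated by everything below level $j+2$, the expected value of the sum of $\prod_{j'\in\{j,j+1\}}$ factors over the two-level choices is at most $\eta$. The key cases are these. Either $x_j$ has another active child with probability bounded below, halving the weight; or $x_{j+1}$ is a leaf-like vertex, and then $\xi_{j+1}=0$ for continuation; or the alternation forces $w_{h_{j,1}}>\B_{j,1}$, and given that $h_{j,1}$ is in the matching, the child $x_{j+1}$ must be matched downward in the other matching. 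The last case costs a probability factor such as $\prob(w>\B)\le$ something bounded away from $1$ because the bonus $\B(x_{j+1},\cdot)$ is positive with positive probability when $x_{j+1}$ has children.

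I would prove this by an induction from the bottom, comparing with a bounded-degree recursive distributional bound. This mirrors the contraction proven in \cite{lamsen2026}: the bonus recursion is a contraction in an appropriate metric uniformly over degree-$D$ trees. The expected derivative along a path of the map $\B(\text{root})$ in $\B(\text{leaf})$ is exactly this product, so the decay follows from Theorem~\ref{thm:vertex_bonus}'s proof. If that connection fails, the fallback is to obtain $\eta$ via the $e^{-\B(x_0,T)}$ factor at every level, by reapplying Lemma~\ref{lem:product_upper_bound_tree} with intermediate roots.

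The main obstacle is this uniform per-level contraction, since the naive telescoping gives only the bound $1$.
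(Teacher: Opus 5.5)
There is a genuine gap. Your reduction via Lemma~\ref{lem:product_upper_bound_tree} and a sum over the edges at depth $R$ is the paper's starting point, and you correctly locate the difficulty in beating the $D^R$ entropy. But the step that actually does this is never supplied.

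The case analysis you sketch does not supply it. An extra active sibling ``halving the weight'' gives no decay for the \emph{summed} quantity. As you observe yourself, $\sum_{y}\xi_{x,y}/(1+\sum_{z\ne y}\xi_{x,z})=\ind\{\text{some child of }x\text{ is active}\}$, so mass is only redistributed among active children. Decay has to come from the possibility that no continuing child is active. That event is correlated with all deeper factors through the bonuses, and your quenched ``$\eta$ per two levels'' claim does not handle this correlation.

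Appealing to Theorem~\ref{thm:vertex_bonus} does not help either. It yields at best a per-edge bound $Ce^{-cr}$ with an unspecified $c$, which is useless against $D^r$ boundary edges unless $c>\log D$. Likewise, the pathwise derivative $\partial\B(x,T_x)/\partial\B(y,T_y)=-\ind\{y\text{ is the active maximizer}\}$ sums to at most $1$ over leaves, so it exhibits no contraction.

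The missing idea is to keep the weight $e^{-\B}$ and let it propagate. Set $\B_x:=\B(x,T_x)$, $\rho_{x,y}:=\xi_{x,y}/(1+\sum_{z\ne y}\xi_{x,z})$ and $U_x(n):=\sum_y\rho_{x,y}U_y(n-1)$. Condition on the subtrees below the children of $x$ and on an active set consisting of $y$ and $k$ others. Memorylessness then gives $\E[e^{-\B_x}\rho_{x,y}]=\frac{1}{k+1}\E e^{-\max(E_1,\dots,E_{k+1})}=\frac{1}{(k+1)(k+2)}$.

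Averaging over the independent sibling subtrees yields a closed linear recursion $\tau_x(n)=\sum_y b_{x,y}\tau_y(n-1)$ for $\tau_x(n):=\E[e^{-\B_x}U_x(n)]$. The coefficients are deterministic: $b_{x,y}=\E[1/((1+W_y)(2+W_y))]$, where $W_y=\sum_{z\ne y}\zeta_z$, the $\zeta_z$ are independent Bernoulli$(p_z)$, and $p_z:=\E e^{-\B_z}$.

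Even this recursion does not contract as it stands: for large $D$, $\sum_y b_{x,y}$ can be of order $D$. One normalizes $s_x:=\tau_x/p_x$, so the coefficients become $J_{x,y}=b_{x,y}p_y/p_x=|\partial q_x/\partial q_y|$ with $q=-\log p$. This is the Jacobian of the \emph{annealed} recursion, not the pathwise derivative. Lemma~\ref{lem:q_jacobian_contraction} then gives $\sum_{y\in I}J_{x,y}\le 1-(2D)^{-D}$ over non-leaf children. The proof uses $p_x-\sum_{y\in I}p_yb_{x,y}\ge \E[\ind_{\{X=0\}}/(1+W)]\ge D^{-1}\prod_{y\in I}(1-p_y)$, where $X$ and $W$ count active children inside and outside $I$, together with $p_y\le 1-1/(2D)$ for non-leaves.

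Your third case points at this last fact, but without the weighted recursion it cannot be combined across levels. Your fallback of reapplying Lemma~\ref{lem:product_upper_bound_tree} at intermediate roots does not produce it either. The factor $e^{-\B(x_0,T)}$ there comes from integrating out $w_f$, which exists only at the top; what is needed is the transfer identity above.

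Dropping $e^{-\B(x_0,T)}$ at the root, as you do, is harmless in itself. Integrating out the root's child weights regenerates $e^{-\B_{x_1}}$ at the cost of one level, but only once this recursion is in place.
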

To prove the lemma, we need the following estimate.
\begin{lem}[One-step $q$-Jacobian contraction]
\label{lem:q_jacobian_contraction}
Let $\mathscr C$ be a finite set with $|\mathscr C|\le D-1$, and let
$\mathscr I\subseteq \mathscr C$. For each $y\in \mathscr C$, let $p_y\in[1/D,1]$, and assume that  $p_y\le 1-1/(2D)$ for all $y\in \mathscr I$.
Let $\{\zeta_y:y\in \mathscr C\}$ be independent Bernoulli random variables with
$\prob(\zeta_y=1)=p_y$. Put
\[ X:=\sum_{y\in \mathscr I}\zeta_y, \quad W:=\sum_{z\in \mathscr C\setminus \mathscr I}\zeta_z, \quad p:=\E\Big[\frac{1}{1+X+W}\Big]. \]
For $y\in \mathscr I$, define
\[ J_y := \frac{p_y}{p} \E\Big[ \frac{1}{(1+W+Z_y)(2+W+Z_y)}
\Big], \qquad Z_y:=\sum_{u\in \mathscr I,  u\ne y}\zeta_u. \]
Then there exists $\eta\in(0,1)$, depending only on $D$, such that
\[ \sum_{y\in \mathscr  I}J_y\le \eta. \]
One may take $\eta:=1-(2D)^{-D}.$
\end{lem}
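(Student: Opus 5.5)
The plan is to rewrite $\sum_{y\in\mathscr I}J_y$ as a single expectation involving $S:=X+W$, and then compare it directly with $p=\E[1/(1+S)]$.

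\emph{Step 1: an expression for each $p_yJ_y$ (done first).} Fix $y\in\mathscr I$ and set $a_y:=W+Z_y$. Then $a_y$ is independent of $\zeta_y$, and $S=a_y+\zeta_y$. On the event $\{\zeta_y=1\}$ we have $S=1+a_y$, so $S(1+S)=(1+a_y)(2+a_y)$. Conditioning on $\zeta_y$ therefore gives
\[ p_y\,\E\Big[\frac{1}{(1+a_y)(2+a_y)}\Big]=\E\Big[\frac{\zeta_y}{S(1+S)}\Big]. \]
This is a purely algebraic identity and needs no assumption on the $p_y$.

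\emph{Step 2: sum over $y\in\mathscr I$.} Since $\sum_{y\in\mathscr I}\zeta_y=X$, summing Step 1 and dividing by $p$ yields
\[ \sum_{y\in\mathscr I}J_y=\frac1p\,\E\Big[\frac{X}{S(1+S)}\Big], \]
with the convention $X/S:=0$ when $X=0$; in that case every $\zeta_y$ with $y\in\mathscr I$ vanishes, so the left side contributes nothing either. Because $0\le X/S\le 1$, this already shows $\sum_y J_y\le 1$. To get a strict gap, write
\[ 1-\sum_{y\in\mathscr I}J_y=\frac1p\,\E\Big[\frac{1-X/S}{1+S}\Big]\ \ge\ \frac1p\,\E\Big[\frac{\ind_{\{X=0\}}}{1+S}\Big]. \]

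\emph{Step 3: quantitative lower bound.} We have $p\le 1$. Also $S\le|\mathscr C|\le D-1$, so $1/(1+S)\ge 1/D$. By independence and the hypothesis $p_y\le 1-1/(2D)$ for $y\in\mathscr I$,
\[ \prob(X=0)=\prod_{y\in\mathscr I}(1-p_y)\ge (2D)^{-|\mathscr I|}\ge (2D)^{-(D-1)}. \]
Combining these bounds,
\[ 1-\sum_{y\in\mathscr I}J_y\ \ge\ \frac{(2D)^{-(D-1)}}{D}\ \ge\ (2D)^{-D}, \]
so $\eta:=1-(2D)^{-D}$ works.

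The only step that is not routine is Step 1, which converts the ``Jacobian'' factor $1/((1+a_y)(2+a_y))$ into $\zeta_y/(S(1+S))$. Once this is in place, the sum over $y\in\mathscr I$ collapses to the single ratio $X/S\le 1$. The gap then comes from the event that no vertex of $\mathscr I$ is active, and the hypothesis $p_y\le 1-1/(2D)$ is exactly what keeps the probability of that event bounded below. The lower bound $p_y\ge 1/D$ is not needed for this argument.
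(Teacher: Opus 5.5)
Your proof is correct and follows the paper's argument essentially line for line: the paper also uses independence to rewrite $\sum_{y\in\mathscr I}p_y\E[\cdot]$ as $\E\bigl[X\ind_{\{X>0\}}/((W+X)(1+W+X))\bigr]\le\E\bigl[\ind_{\{X>0\}}/(1+W+X)\bigr]$. It then extracts the gap $\E\bigl[\ind_{\{X=0\}}/(1+W)\bigr]\ge D^{-1}\prob(X=0)\ge(2D)^{-D}$ and finishes with $p\le 1$, exactly as you do, and your remark that $p_y\ge 1/D$ is not needed here is accurate.
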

\begin{proof}
If $\mathscr I=\emptyset$, there is nothing to prove. Assume $\mathscr I\ne\emptyset$.

First observe that
\[ \sum_{y\in \mathscr I} p_y \E\Big[ \frac{1}{(1+W+Z_y)(2+W+Z_y)} \Big]
\le \E\Big[ \frac{\ind_{\{X>0\}}}{1+W+X} \Big]. \]
Indeed, by independence,
\[ \begin{aligned}
\sum_{y\in \mathscr I}
p_y \E\Big[ \frac{1}{(1+W+Z_y)(2+W+Z_y)} \Big]
&= \sum_{y\in \mathscr I} \E\Big[ \frac{\zeta_y}{(W+X)(1+W+X)}
\ind_{\{X>0\}} \Big]  \\
&= \E\Big[ \frac{X}{(W+X)(1+W+X)} \ind_{\{X>0\}} \Big] \\
&\le \E\Big[ \frac{\ind_{\{X>0\}}}{1+W+X} \Big].
\end{aligned}
\]
Therefore
\begin{equation}\label{eq:ppy}
p - \sum_{y\in \mathscr I} p_y \E\Big[
\frac{1}{(1+W+Z_y)(2+W+Z_y)} \Big]
\ge \E\Big[ \frac{\ind_{\{X=0\}}}{1+W} \Big].
\end{equation}
Since $W\le D-1$, 
\[ \E\Big[ \frac{\ind_{\{X=0\}}}{1+W} \Big] \ge \frac1D\prob(X=0). \]
For $y\in \mathscr I$, $1-p_y\ge (2D)^{-1}$. Also, $|\mathscr I|\le D-1$. Hence
\[\prob(X=0) = \prod_{y\in \mathscr I}(1-p_y)
\ge ( 2D)^{-(D-1)}.\]
Thus the LHS of \eqref{eq:ppy} is at least $(2D)^{-D}.$
Since $p\le1$, it follows that
\[ \sum_{y\in \mathscr I}J_y \le \frac{p-(2D)^{-D}}{p}
\le 1-(2D)^{-D}. \]
This proves the lemma.
\end{proof}

\begin{proof}[Proof of Lemma~\ref{lem:tree_boundary_tail}]
Let $\eta:=1-(2D)^{-D}$ and  $f=(o_1 o_2)$. After deleting $f$, root the two components at $o_1$ and $o_2$. It is enough
to bound the event that $\I_f(G)$ reaches level $R$ in one rooted
component. Let this rooted tree be $T$, with root $o$. Every vertex has at most
$D-1$ children.

For a vertex $x$ of  the rooted tree $T$, let $\mathcal C(x) =\mathcal C_T(x) $ be the set of children of $x$, and let
$T_x$ be the descendant subtree rooted at $x$. Set
\[ \B_x:=\B(x,T_x), \qquad p_x:=\E e^{-\B_x},
\qquad q_x:=-\log p_x. \]
For $y\in\mathcal C(x)$, define
\[ \xi_{x,y}:=\ind_{\{w_{(xy)}>\B_y\}}, \qquad
\rho_{x,y} := \frac{\xi_{x,y}} {1+\sum_{z\in\mathcal C(x), z\ne y}\xi_{x,z}}. \]
For a vertex $x$ of $T$, define the functions 
\[ U_x(0):=1, \qquad U_x(n):= \sum_{y\in\mathcal C(x)}\rho_{x,y}U_y(n-1), \qquad n\ge1. \]
As in Lemma~\ref{lem:product_upper_bound_tree}, applied to each edge entering
level $R$ and then summed over those edges, the probability that the alternating path reaches level $R$ in this rooted component is bounded by
\begin{equation}\label{eq:rooted_path_bound_q}
 \E[e^{-\B_o}U_o(R)].   
\end{equation}
Indeed, if $a=(x_{R-1} x_R)$ is an edge entering level $R$ in the rooted
component, then Lemma~\ref{lem:product_upper_bound_tree} gives
\[ \prob(a\in\I_f(G)) \le \E\Big[ e^{-\B_o} \prod_{j=0}^{R-1}\rho_{x_j,x_{j+1}} \Big]. \]
Taking a union bound over all edges entering level $R$, and using the recursive
definition of $U_o(R)$, gives \eqref{eq:rooted_path_bound_q}.

We prove that, for every vertex $x$ and every $n\ge1$,
\begin{equation} \label{eq:weighted_susceptibility_q}
    \E[e^{-\B_x}U_x(n)]\le \eta^{n-1}.
\end{equation}

Set
\[ \tau_x(n):=\E[e^{-\B_x}U_x(n)],
\qquad s_x(n):=\frac{\tau_x(n)}{p_x}. \]
Since $0\le U_x(n)\le1$, we have  $0\le s_x(n)\le1.$
The bonus recursion gives
\[ \B_x=\max_{y\in\mathcal C(x)}(w_{(xy)}-\B_y)_+. \]
For each $y\in\mathcal C(x)$,
\[ \prob(\xi_{x,y}=1) = \prob(w_{(xy)}>\B_y)
= \E e^{-\B_y} = p_y. \]
By the exponential memoryless property, conditional on the active
indicators $(\xi_{x,y})_{y\in\mathcal C(x)}$, the positive residuals
$w_{(xy)}-\B_y$ are independent $\mathrm{Exp}(1)$ random variables.
Therefore,
\[ p_x=\E e^{-\B_x} = \E\Big[ \frac{1}{1+\sum_{y\in\mathcal C(x)}\xi_{x,y}} \Big]. \]
Thus, if $\zeta_y$, $y\in\mathcal C(x)$, are independent Bernoulli variables with
means $p_y$, then
\begin{equation}\label{eq:p_recursion_q}
p_x = \E\Big[ \frac{1}{1+\sum_{y\in\mathcal C(x)}\zeta_y} \Big].  
\end{equation}
Equivalently, $q_x$ is obtained from the recursion
\[ q_x =
\Psi\bigl((q_y)_{y\in\mathcal C(x)}\bigr),
\]
where
\[ \Psi(q_1,\ldots,q_k) := -\log \E\Big[ \frac{1}{1+\zeta_1+\cdots+\zeta_k} \Big],
\qquad \prob(\zeta_i=1)=e^{-q_i}. \]
Let
\[
I_x(n):=
\{y\in\mathcal C(x):T_y\text{ contains a descendant path of length }n-1\}.
\]
Only children in $I_x(n)$ contribute to $U_x(n)$. Conditioning on the descendant subtrees below the children of $x$, and then
integrating over the edge weights from $x$ to its children, gives
\begin{equation}\label{eq:T_recursion_q}
 \tau_x(n) = \sum_{y\in I_x(n)} b_{x,y}\tau_y(n-1),
\end{equation}
where
\begin{equation} \label{eq:bxy_def_q}
    b_{x,y}   :=\E\Big[ \frac{1}{\big(1+\sum_{z\in\mathcal C(x),\,z\ne y}\zeta_z\big)
\big(2+\sum_{z\in\mathcal C(x),\,z\ne y}\zeta_z\big)} \Big].
\end{equation}
Here the variables $\zeta_z$ are independent Bernoulli variables with
$\prob(\zeta_z=1)=p_z$.

We justify the recursion \eqref{eq:T_recursion_q}. Condition first on the
descendant subtrees below the children of $x$. Then the quantities
$\B_z$ and $U_z(n-1)$, $z\in\mathcal C(x)$, are fixed, and only the edge
weights $w_{(xz)}$ remain random.

For this conditional calculation, let $\widehat\zeta_z$,
$z\in\mathcal C(x)$, be conditionally independent Bernoulli variables satisfying
\[ \prob(\widehat\zeta_z=1\mid \mathcal F_x)=e^{-\B_z}, \]
where $\mathcal F_x$ is the sigma-field generated by the descendant subtrees
below the children of $x$. Thus $\widehat\zeta_z$ is the conditional version
of the active indicator $\ind_{\{w_{(xz)}>\B_z\}}$.

Fix $y\in\mathcal C(x)$. Suppose that $y$ is active and exactly $k$ other
children of $x$ are active. Then $\rho_{x,y}=1/(k+1)$. Conditional on this
active set, the positive residuals
\[ w_{(xz)}-\B_z,\qquad z\text{ active}, \]
are independent mean-one exponential random variables, by the memoryless
property. Denote these $k+1$ residuals by $E_1,\ldots,E_{k+1}$.  On this event,
\[ \B_x=\max(E_1,\ldots,E_{k+1}), \qquad \rho_{x,y}=\frac1{k+1}. \]
Therefore, conditionally on $\mathcal F_x$ and on this active set,
\[ \E\big[e^{-\B_x}\rho_{x,y}\big] = \frac1{k+1} \E e^{-\max(E_1,\ldots,E_{k+1})}
= \frac{1}{(k+1)(k+2)}. \]
It follows that, conditionally on $\mathcal F_x$, the conditional expectation
of $ e^{-\B_x}\rho_{x,y}U_y(n-1)$ for $ y \in \mathcal{C}(x)$  is $e^{-\B_y}U_y(n-1)\widehat b_{x,y},$
where
\[\widehat b_{x,y} :=
\E\Big[ \frac{1}{ \big(1+\sum_{z\in\mathcal C(x),\,z\ne y}\widehat\zeta_z\big)
\big(2+\sum_{z\in\mathcal C(x),\,z\ne y}\widehat\zeta_z\big)} \,\Big|\,\mathcal F_x \Big].
\]
The expectation in the definition of $\widehat b_{x,y}$ is only over the
conditional Bernoulli variables $\widehat\zeta_z$, $z\ne y$.

Taking expectation over the descendant subtrees, and using independence of
the descendant subtrees below distinct children, gives
\[ \E\big[e^{-\B_y}U_y(n-1)\widehat b_{x,y}\big] =
\tau_y(n-1)\E\widehat b_{x,y}. \]
After averaging over the sibling subtrees, the conditional Bernoulli variables
$\widehat\zeta_z$ become independent Bernoulli variables with deterministic
means $\E e^{-\B_z}=p_z.$
Hence $\E\widehat b_{x,y}=b_{x,y}$,
where $b_{x,y}$ is the coefficient defined in \eqref{eq:bxy_def_q}. Therefore
\[ \E\big[e^{-\B_y}U_y(n-1)\widehat b_{x,y}\big] =
\tau_y(n-1)b_{x,y}. \]
Summing over $y\in I_x(n)$ gives \eqref{eq:T_recursion_q}.

Now define
\[ J_{x,y}:=\frac{b_{x,y}p_y}{p_x}. \]
Dividing \eqref{eq:T_recursion_q} by $p_x$, we obtain
\[ s_x(n) = \sum_{y\in I_x(n)}J_{x,y}s_y(n-1).\]

Also, for every vertex $v$, $p_v\ge D^{-1}.$
Indeed, by \eqref{eq:p_recursion_q} and the fact that
$|\mathcal C(v)|\le D-1$,
\[ p_v = \E\Big[ \frac{1}{1+\sum_{u\in\mathcal C(v)}\zeta_u} \Big]
\ge \frac{1}{1+|\mathcal C(v)|} \ge \frac1D. \]

For $n\ge2$, every $y\in I_x(n)$ is non-leaf. Hence $p_y\le1-1/(2D)$.
Indeed, if $y$ has a child $z$, then $p_z\ge 1/D$, and the recursion gives
\[ p_y = \E\Big[ \frac{1}{1+\sum_{u\in\mathcal C(y)}\zeta_u} \Big]
\le 1-\frac{p_z}{2} \le 1-\frac{1}{2D}. \]
 Therefore
Lemma~\ref{lem:q_jacobian_contraction}, applied with
$\mathscr I=I_x(n)$, gives $\sum_{y\in I_x(n)}J_{x,y}\le \eta.$
Consequently, for $n\ge2$,
\[ s_x(n) \le \eta\max_{y\in I_x(n)} s_y(n-1). \]
Since $s_x(1)\le1$, induction gives $s_x(n)\le \eta^{n-1}$ for all $n\ge 1$.
Thus
\[ \tau_x(n)=p_xs_x(n)\le \eta^{n-1}, \]
which proves \eqref{eq:weighted_susceptibility_q}.

Returning to the original edge $f=(o_1,o_2)$, the event that
$\I_f(G)$ reaches $\partial\bB_f^R(G)$ implies that the
alternating path reaches level $R$ in at least one of the two rooted
components of $G\setminus f$. Therefore, by \eqref{eq:rooted_path_bound_q}
and \eqref{eq:weighted_susceptibility_q},
\[ \prob\bigl( \I_f(G)\text{ reaches }\partial\bB_f^R(G) \bigr)
\le 2\eta^{R-1}. \]
The lemma follows with $C:=2\eta^{-1}$ and $\eta:=1-(2D)^{-D}$.
\end{proof}

\begin{rem}
The coefficients $J_{x,y}$ are the $q$-Jacobian coefficients for the recursion
$q_x=\Psi((q_z)_{z\in\mathcal C(x)})$. Indeed, by
\eqref{eq:p_recursion_q}, $\tfrac{\partial p_x}{\partial p_y}=-b_{x,y}.$
Since $q_x=-\log p_x$ and $p_y=e^{-q_y}$, the chain rule gives
\[ \frac{\partial q_x}{\partial q_y} = -\frac{b_{x,y}p_y}{p_x}. \]
Thus $J_{x,y}=|\partial q_x/\partial q_y|$.
\end{rem}

\subsection{Proof of Proposition~\ref{prop:annular_delete_edge}}

\begin{df}[$q$-Jacobian coefficients]
\label{df:q_jacobian_coefficients}
Let $S$ be a finite rooted tree. For every vertex $v\in S$, write
$S_v$ for the descendant subtree of $S$ rooted at $v$, and define
\[ p_v^S:=\E e^{-\B(v,S_v)}, \qquad q_v^S:=-\log p_v^S . \]
If $x\in V(S)$ and $y\in\mathcal C_S(x)$ is a child of $x$ in $S$, define the
$q$-Jacobian coefficient from $x$ to $y$ in the tree $S$ by
\[ J_{x,y}^S := \Big| \frac{\partial q_x^S}{\partial q_y^S} \Big|. \]
Equivalently, let $\{\zeta_z^S:z\in\mathcal C_S(x)\}$
be independent Bernoulli random variables with $\prob(\zeta_z^S=1)=p_z^S$.
Then
\begin{equation}\label{eq:Jxy_def} 
 J_{x,y}^S = \frac{p_y^S}{p_x^S} \E\Big[ \frac{1}{ \big(1+\sum_{z\in\mathcal C_S(x),\,z\ne y}\zeta_z^S\big) \big(2+\sum_{z\in\mathcal C_S(x),\,z\ne y}\zeta_z^S\big)} \Big].
\end{equation}
\end{df}

\begin{lem}[stability of $q$-Jacobian coefficients]
\label{lem:q_jacobian_stability}
There exist constants $C$ and $\gamma\in(0,1)$, depending only on $D$,
such that the following holds.

Let $T$ be a finite rooted tree of degree at most $D$, and let $u\in V(T)$.
Let $S\subseteq T_u$ be a rooted subtree with root $u$ which agrees with
$T_u$ to depth at least $\ell\ge1$ below $u$.

For $v\in\mathcal C_T(u)=\mathcal C_S(u)$, let $J_{u,v}^S$ be the $q$-Jacobian coefficient
computed in the rooted tree $S$, and let $J_{u,v}^{T}$ be the $q$-Jacobian
coefficient computed in the ambient rooted tree $T$. Equivalently,
$J_{u,v}^{T}$ is the coefficient computed in the descendant subtree $T_u$.
Then
\[ \sum_{v\in\mathcal C_T(u)} \left|J_{u,v}^S-J_{u,v}^{T}\right| \le
C\gamma^\ell . \]
\end{lem}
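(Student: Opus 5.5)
The plan is to reduce the stability of the $q$-Jacobian coefficients to stability of the quantities $p_v$ at the children of $u$, and then to obtain the latter from the contraction built into the recursion \eqref{eq:p_recursion_q}. By \eqref{eq:Jxy_def}, $J_{u,v}^S$ depends on $S$ only through the vector $(p_z^S)_{z\in\mathcal C_S(u)}$. The same formula, evaluated with $(p_z^T)_{z\in\mathcal C_T(u)}$, gives $J_{u,v}^T$, and here $p_u$ is itself the smooth function $\E[1/(1+\sum_z\zeta_z)]$ of these inputs. The map $(p_z)_z\mapsto J_{u,v}$ is therefore a rational function of at most $D-1$ variables in $[1/D,1]$ with denominator $p_u\ge 1/D$, so it is Lipschitz on this domain with a constant depending only on $D$. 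It thus suffices to show that
\[
\max_{z\in\mathcal C_T(u)}|p_z^S-p_z^T|\le C\gamma^{\ell},
\]
since summing over at most $D$ children only changes the constant.

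To compare the $p$'s, I will pass to the variables $q=-\log p$. For a vertex $x$ at depth $k<\ell$ below $u$, the children of $x$ agree in $S$ and $T$. By the mean value theorem applied to the recursion $q_x=\Psi((q_y)_{y\in\mathcal C(x)})$, we get
\[
|q_x^S-q_x^T|\le\sum_{y\in\mathcal C(x)}\bar J_{x,y}\,|q_y^S-q_y^T|,
\]
where $\bar J_{x,y}=|\partial\Psi/\partial q_y|$ is evaluated at an intermediate point on the segment between $(q^S_y)_y$ and $(q^T_y)_y$. The key step is to check that this intermediate point still satisfies the hypotheses of Lemma~\ref{lem:q_jacobian_contraction}. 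All $p$-values lie in $[1/D,1]$, and this interval is convex in the $q$-coordinates, so intermediate points stay there. Only children $y$ whose values actually differ contribute, and such a $y$ is a non-leaf in both trees when its depth is below $\ell$ (since $S$ and $T_u$ agree to depth $\ell$). Hence $p_y\le 1-1/(2D)$ at both endpoints, and by convexity also at the intermediate point. Taking $\mathscr I$ to be the set of these differing children, Lemma~\ref{lem:q_jacobian_contraction} gives $\sum_{y\in\mathscr I}\bar J_{x,y}\le\eta$, and therefore
\[
\max_{\text{depth }k}|q^S-q^T|\le\eta\max_{\text{depth }k+1}|q^S-q^T|.
\]

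The iteration runs from the children of $u$ (depth $1$) down to depth $\ell$. At depth $\ell$ the crude bound $|q^S-q^T|\le\log D$ holds, and it requires no agreement below that level. This gives $|q_z^S-q_z^T|\le\eta^{\ell-1}\log D$ for every $z\in\mathcal C(u)$. Converting back via $|p^S-p^T|\le|q^S-q^T|$ (since $p\le1$) and applying the Lipschitz step from the first paragraph yields the claim with $\gamma=\eta$.

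I expect the main obstacle to be the boundary of the agreement region. At depth exactly $\ell-1$ a child's subtree may be truncated in $S$, so it can be a leaf in $S$ but not in $T$, or the reverse. In that case the bound $p_y\le 1-1/(2D)$ can fail at one endpoint, and I need to make sure that the depth bookkeeping (agreement "to depth at least $\ell$") keeps every contracting step strictly inside the region where both trees are non-leaves. If not, I would simply sacrifice one level and absorb the loss into $C$. A secondary check is that Lemma~\ref{lem:q_jacobian_contraction} applies verbatim at intermediate, non-tree-generated $p$-values; it does, because its proof uses only the bounds on the $p_y$.
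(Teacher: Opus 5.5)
Your argument is correct, and it reaches the key step by a different route from the paper. Both proofs finish with the same Lipschitz reduction: $J_{u,v}$ depends only on the children's $p$-values.

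The paper gets $|p^S_{v_i}-p^T_{v_i}|\le C\gamma^\ell$ by applying Lemma~\ref{lem: edge_bonus}(ii) to the child subtrees $S_{v_i},T_{v_i}$, which agree to depth $\ell-1$. This imports the Lam--Sen correlation decay Theorem~\ref{thm:vertex_bonus}, after which $1$-Lipschitzness of $x\mapsto e^{-x}$ finishes the step.

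You instead prove stability of the $p$'s directly. You contract the deterministic recursion $q_x=\Psi((q_y)_y)$ using the mean value theorem and Lemma~\ref{lem:q_jacobian_contraction}. You correctly observe that the lemma applies at intermediate points, since it uses only the bounds on the $p_y$.

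Your route is self-contained and gives the explicit rate $\gamma=1-(2D)^{-D}$. It controls only the annealed quantities $p_v$, which is exactly what the $q$-Jacobian coefficients need. The paper's route is shorter but rests on a pathwise $L^2$ statement about the bonuses, which is much stronger than what this lemma requires.

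The boundary problem you flag does occur. Because $S\subseteq T_u$, a child at depth exactly $\ell$ can be a leaf in $S$ (so $p^S=1$) but not in $T$. The segment then leaves the region $p_y\le 1-1/(2D)$, so the contraction at a vertex of depth $\ell-1$ is not justified. The reverse situation cannot occur. Your fallback repairs this: contract only at depths $1,\dots,\ell-2$, where leaf status of the relevant children agrees in both trees. Then use the crude bound $|q^S-q^T|\le\log D$ at depth $\ell-1$, giving $\eta^{\ell-2}\log D$. The cases $\ell\le 2$ are absorbed into $C$.

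A minor point: if $u$ is the root of $T$, it may have $D$ children, so $p_u\ge 1/(D+1)$ rather than $1/D$. This only changes the Lipschitz constant, and you never apply the contraction lemma at $u$ itself.
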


\begin{proof}
It suffices to consider $\ell\ge 3$ since the cases $\ell=1,2$ are absorbed into $C$. Write
$\mathcal C_T(u)=\{v_1,\ldots,v_k\}$, $k\le D$. For each $i$,
the rooted trees $S_{v_i}$ and $T_{v_i}$ agree to depth $\ell-1$.
Applying Lemma~\ref{lem: edge_bonus}(ii) yields
\[ \E|\B(v_i,S_{v_i})-\B(v_i,T_{v_i})|\le C\gamma^\ell. \]
Since $x\mapsto e^{-x}$ is $1$-Lipschitz on $[0,\infty)$,
\[ |p_{v_i}^S-p_{v_i}^T|\le C\gamma^\ell . \]
By the explicit formula for $J_{u,v}$, the map
$(p_{v_1},\ldots,p_{v_k})\mapsto J_{u,v_i}$ is uniformly Lipschitz on
$[1/(D+1),1]^k$. Hence, by increasing $C$ if necessary,
\[ |J_{u,v_i}^S-J_{u,v_i}^T| \le C\sum_{j=1}^k |p_{v_j}^S-p_{v_j}^T|
\le C\gamma^\ell . \]
Summing over $i$ proves the claim.
\end{proof}

\begin{lem}
\label{lem:annular_sum}
There exist constants $m_0 \ge 1,  C>0$ and $\theta\in(0,1)$, depending only on $D$, such
that the following holds. Let $T$ be a finite rooted tree of forward degree at
most $D-1$, fix $x\in T$, and let $m\ge m_0$, $n\ge1$.

For each descendant edge $g=(x_{n-1} x_n)$ at depth $n$ below $x$, write
$x=x_0 \sim x_1 \sim \cdots \sim x_n$ for the path from $x$ to $g$, and let
$H_g\subseteq T_x$ be a rooted subtree with root $x$ containing this path. Assume that, for
each $0\le j\le n-1$, the descendant subtree of $H_g$ rooted at $x_j$ agrees
with $T_{x_j}$ to depth at least $m+j$ below $x_j$.

For $y\in\mathcal C_{H_g}(x_j)$, let $(H_g)_y$ be the descendant subtree of
$H_g$ rooted at $y$, and set
\[ \xi_{j,y}^{g} := \ind_{\{w_{(x_jy)}>\B(y,(H_g)_y)\}} .\]
Define the local transition factor along the path to $g$ by
\[ \rho_j^{\rm loc}(g) := \frac{\xi_{j,x_{j+1}}^{g}}
{1+\sum_{y\in\mathcal C_{H_g}(x_j),\,y\ne x_{j+1}}\xi_{j,y}^{g}},
\qquad 0\le j\le n-1.\]
Then
\[ \sum_{g:\,g\textnormal{ descendant edge at depth }n\textnormal{ below }x}
\E\prod_{j=0}^{n-1}\rho_j^{\rm loc}(g)
\le C\theta^n .
\]
\end{lem}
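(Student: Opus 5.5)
The plan is to compare with the exact (non-truncated) susceptibility computed in the full tree $T$, for which the proof of Lemma~\ref{lem:tree_boundary_tail} already gives exponential decay. We then control the error coming from replacing $T$ by $H_g$ level by level, using the depth-dependent agreement $m+j$.

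\textbf{Step 1: factorization along the path.} First I would redo the conditioning argument of Lemma~\ref{lem:product_upper_bound_tree} and of \eqref{eq:T_recursion_q}, without the factor $e^{-\B}$ at the top. I condition on all weights except those on the child edges of $x_0$ and integrate out those child-edge weights. The memoryless property then shows that $\E[\rho_0^{\rm loc}(g)\mid\cdots]$ equals a function of the activation probabilities of the children. Iterating from the top down is awkward, so instead I iterate from the bottom up, exactly as in the proof of \eqref{eq:weighted_susceptibility_q}. Write $\tilde p^{g}_y:=\E e^{-\B(y,(H_g)_y)}$. Independence of the sibling subtrees gives
\[
\E\prod_{j=0}^{n-1}\rho_j^{\rm loc}(g)=\prod_{j=0}^{n-1}\tilde a_j(g),\qquad \tilde a_j(g):=\tilde p^g_{x_{j+1}}\,\E\Big[\frac1{1+\sum_{y\ne x_{j+1}}\zeta_y}\Big],
\]
where the $\zeta_y$ are independent with means $\tilde p^g_y$, $y\in\mathcal C_{H_g}(x_j)$. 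This requires checking that the random factors at different levels depend on disjoint sets of weights, except through the single subtree containing the path. That subtree enters the level-$j$ factor only via $\xi_{j,x_{j+1}}$, whose conditional mean is $e^{-\B}$. Because the recursion does not carry $e^{-\B}$ upward, I expect the factor at level $j$ to involve $\E[e^{-\B(x_{j+1},\cdot)}\,\cdot\,\text{lower product}]$. So I should instead keep the weighted quantity $\tau$ as in \eqref{eq:T_recursion_q} and bound $\E\prod\rho\le \E[e^{-\B_{x_0}}\prod\rho]/\min p$. Since $p\ge 1/D$, this costs only a constant factor $D$, and I take that route.

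\textbf{Step 2: a Jacobian recursion in $H_g$.} With the weighting, the proof of \eqref{eq:T_recursion_q} applied in $H_g$ gives
\[
\E\Big[e^{-\B(x_0,H_g)}\prod_j\rho_j^{\rm loc}(g)\Big]=p^{H_g}_{x_0}\prod_{j=0}^{n-1}J^{H_g}_{x_j,x_{j+1}},
\]
using Definition~\ref{df:q_jacobian_coefficients}. Lemma~\ref{lem:q_jacobian_stability} compares the subtree of $H_g$ at $x_j$, which agrees with $T_{x_j}$ to depth $m+j$, and yields $|J^{H_g}_{x_j,y}-J^T_{x_j,y}|\le C\gamma^{m+j}$, summed over children $y$. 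The comparison is made at every level $j$, with the children at level $j$ of $H_g$ and $T$ coinciding.

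\textbf{Step 3: summation and the main obstacle.} Summing over $g$ gives at most $D\sum_{\text{paths}}\prod_j J^{H_g}_{x_j,x_{j+1}}$. Bounding each $J^{H_g}\le J^T+\epsilon_j(y)$ and expanding the product over all paths, the sum factorizes level by level. Here I need $\sum_y J^T_{x,y}\le\eta$ for children that are not leaves (Lemma~\ref{lem:q_jacobian_contraction}), together with the bound $\sum_y\epsilon_j(y)\le C\gamma^{m+j}$. This gives
\[
\prod_j\big(\eta+C\gamma^{m+j}\big)\le \eta^n\exp\Big(C\eta^{-1}\gamma^m/(1-\gamma)\Big)\le C\eta^n,
\]
and we take $\theta=\eta$. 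The main obstacle is that the bound $\sum_y J\le\eta$ holds only over the set $I_x(n)$ of non-leaf children with long descendant paths, and the relevant set may differ between $H_g$ and $T$. It also must be applied uniformly while $H_g$ depends on $g$, so the sum over $g$ does not literally factorize. I would handle this by bounding $J^{H_g}_{x_j,y}\le J^T_{x_j,y}+\epsilon_j(y)$ with $\epsilon_j$ depending only on $T$, which removes the $g$-dependence. The bound $\epsilon_j(y)$ comes from applying Lemma~\ref{lem:q_jacobian_stability} to the worst-case $H_g$. Then every path below $x$ to depth $n$ has non-leaf intermediate vertices in $T$, so the recursion $s_x(n)\le(\eta+C\gamma^{m+j})\max s_y(n-1)$ runs entirely in $T$. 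Choosing $m_0$ so that $\eta+C\gamma^{m_0}<1$ then finishes the argument.
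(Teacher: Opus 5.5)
Your route is essentially the paper's. You turn the weighted path expectation in $H_g$ into a product of $q$-Jacobian coefficients by iterating \eqref{id:J_iter}. You replace $J^{H_g}_{x_j,y}$ by $J^{T}_{x_j,y}+C\gamma^{m+j}$ uniformly in $g$ via Lemma~\ref{lem:q_jacobian_stability}; this removes the $g$-dependence, since the set of children leading to depth $n$ is determined by $T$. You then apply Lemma~\ref{lem:q_jacobian_contraction} to non-leaf children and close with a backward induction over levels. Using $\sum_j\gamma^{m+j}<\infty$ to get $\theta=\eta$ is a valid minor sharpening of the paper's uniform choice $\theta=(1+\eta)/2$ with $m\ge m_0$.

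Two steps need repair.

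\textbf{The top level.} The displayed factorization $\E\prod_j\rho_j^{\rm loc}(g)=\prod_j\tilde a_j(g)$ is false and should be discarded. The justification you give for its replacement is also wrong. The bound $p_{x_0}\ge 1/D$ only controls the mean of $e^{-\B(x_0,H_g)}$. On $\{\prod_j\rho_j^{\rm loc}(g)>0\}$, the variable $\B(x_0,H_g)$ is a maximum of unbounded exponential residuals, so the inequality $\E X\le\E[e^{-\B}X]/\min p$ does not follow from it.

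The inequality $\E\prod_j\rho_j^{\rm loc}(g)\le D\,\E\bigl[e^{-\B(x_0,H_g)}\prod_j\rho_j^{\rm loc}(g)\bigr]$ does hold, but for a different reason. Condition on the weights inside the subtrees below the children of $x_0$ and on the active set at $x_0$; together these determine all the $\rho_j^{\rm loc}(g)$. Given them, the $a\le D-1$ positive residuals are i.i.d.\ $\mathrm{Exp}(1)$. Hence $\E[e^{-\B(x_0,H_g)}\mid\cdot\,]=1/(a+1)\ge 1/D$ on $\{\rho_0^{\rm loc}(g)>0\}$.

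The paper avoids this step altogether. It bounds $\rho_0^{\rm loc}(g)\le\xi^{g}_{0,x_1}$ and integrates out $w_{(x_0x_1)}$, which gives $\E\prod_j\rho_j^{\rm loc}(g)\le p^{H_g}_{x_1}\prod_{j=1}^{n-1}J^{H_g}_{x_j,x_{j+1}}$ at the cost of the level-$0$ factor only.

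\textbf{The last level.} Your product $\prod_j(\eta+C\gamma^{m+j})$ cannot run over all $n$ levels. At $j=n-1$ the child $x_n$ may be a leaf, and then the contraction fails; for a single leaf child one gets $J=1$. At that level you need a constant bound instead, for example $J\le D$, as in the paper's base case $\mathcal A_{n-1}(u)\le D(D-1)$. The final bound is then a constant times $\prod_{j=0}^{n-2}(\eta+(D-1)C\gamma^{m+j})$.

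With these corrections the argument is complete.
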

\begin{figure}[ht] \label{fig:lemma57-setup}
\centering
\begin{tikzpicture}[
    scale=0.9,
    transform shape,
    line cap=round,
    line join=round,
    every node/.style={
        font=\small,
        text=black
    },
    treeedge/.style={
        draw=black!65,
        line width=0.85pt
    },
    distinguished/.style={
        draw=black,
        line width=1.35pt
    },
    targetedge/.style={
        draw=black,
        line width=1.55pt
    },
    vertex/.style={
        circle,
        fill=black,
        inner sep=1.6pt
    },
    vlabel/.style={
        inner sep=1.2pt,
        fill=white,
        fill opacity=0.9,
        text opacity=1
    },
    hgboundary/.style={
        draw=black!50,
        dashed,
        line width=1pt,
        rounded corners=9pt,
        fill=black!5,
        fill opacity=0.48
    },
    agreement/.style={
        draw=black!35,
        dash pattern=on 3pt off 2pt,
        line width=0.95pt,
        rounded corners=7pt,
        fill=black!16,
        fill opacity=0.55
    }
]

\coordinate (x0) at (0,6.8);
\coordinate (a) at (-1.2,5.6);
\coordinate (x1) at (0.1,5.7);
\coordinate (xj) at (0,3.8);
\coordinate (xjp) at (0.3,3.1);
\coordinate (xnp) at (0.55,0.8);
\coordinate (xn) at (0.35,-0.2);
\coordinate (gmid) at ($(xnp)!0.5!(xn)$);

\path[hgboundary]
    (-1.80,-2.25) rectangle (3.45,7.15);

\path[agreement]
    (-0.75,-1.95) rectangle (1.85,4.10);

\draw[treeedge]
    (x0)
    .. controls (-0.5,6.4) and (-0.9,6.0) .. (a);

\draw[treeedge]
    (x0)
    .. controls (-0.1,6.3) and (0.1,6.0) .. (x1);

\draw[treeedge]
    (a)
    .. controls (-1.4,4.6) and (-2.7,4.8) .. (-3.1,3.4)
    .. controls (-3.4,2.1) and (-3.0,1.3) .. (-3.8,0.2);

\draw[treeedge]
    (a)
    .. controls (-1.3,4.8) and (-0.8,4.2) .. (-1.1,3.6)
    .. controls (-1.6,2.7) and (-2.1,1.7) .. (-2.1,1.1);

\draw[treeedge]
    (x1)
    .. controls (0.0,5.0) and (-0.2,4.4) .. (xj);

\draw[treeedge]
    (x1)
    .. controls (1.05,5.60) and (2.15,5.15) .. (2.30,4.30)
    .. controls (2.50,3.65) and (2.30,2.75) .. (2.60,2.00);

\draw[treeedge]
    (xj)
    .. controls (0.35,4.00) and (0.75,3.85) .. (0.95,3.55)
    .. controls (1.15,3.30) and (1.35,3.45) .. (1.55,3.08);
\draw[treeedge]
    (xj)
    .. controls (0.7,3.3) and (0.8,3.0) .. (1,2.6)
    .. controls (1.1,2.3) and (1.1,2.0) .. (1.2,1.6)
    .. controls (1.3,1.2) and (1,0.9) .. (1.2,0.5)
    .. controls (1.3,0.1) and (1.0,-0.4) .. (0.9,-0.8)
    .. controls (0.8,-1.2) and (1.0,-1.5) .. (0.85,-1.8)
    .. controls (0.7,-2.2) and (0.5,-2.5) .. (0.6,-2.8);

\draw[treeedge]
    (xn)
    .. controls (0.25,-0.5) and (-0.25,-0.9) .. (-0.25,-1.3)
    .. controls (-0.25,-1.7) and (-0.45,-2.1) .. (-0.35,-2.5);

\draw[distinguished]
    (x0)
    .. controls (-0.1,6.3) and (0.1,6.0) .. (x1);

\draw[distinguished]
    (x1)
    .. controls (0.0,5.0) and (-0.2,4.4) .. (xj);

\draw[distinguished]
    (xj)
    .. controls (0.2,3.5) and (0.25,3.3) .. (xjp);

\draw[distinguished]
    (xjp)
    .. controls (0.35,2.4) and (0.45,1.5) .. (xnp);

\draw[targetedge]
    (xnp) -- (xn);

\draw[
    decorate,
    decoration={brace,amplitude=4pt},
    black,
    line width=0.95pt
]
    (2.05,3.80) -- (2.05,-1.95)
    node[
        midway,
        right=5pt
    ] {$m+j$};

\foreach \p in {x0,a,x1,xj,xjp,xnp,xn}
    \node[vertex] at (\p) {};

\node[
    vlabel,
    anchor=west
]
    at ($(x0)+(0.14,-0.10)$)
    {$x=x_0$};

\node[
    vlabel,
    anchor=east
]
    at ($(x1)+(-0.12,0.02)$)
    {$x_1$};

\node[
    vlabel,
    anchor=east
]
    at ($(xj)+(-0.12,-0.02)$)
    {$x_j$};

\node[
    vlabel,
    anchor=east
]
    at ($(xjp)+(-0.12,-0.02)$)
    {$x_{j+1}$};

\node[
    vlabel,
    anchor=west
]
    at ($(xnp)+(0.12,0.08)$)
    {$x_{n-1}$};

\node[
    vlabel,
    anchor=west
]
    at ($(xn)+(0.12,-0.08)$)
    {$x_n$};

\node[
    anchor=east
]
    at (-2.35,5.15)
    {ambient tree $T_x$};

\node[
    anchor=east,
    align=right
]
    (pathlabel) at (-2.0,6.55)
    {distinguished path\\
     $x=x_0\sim x_1\sim\cdots\sim x_n$};

\draw[
    black,
    line width=0.8pt,
    -{Stealth[length=4.5pt,width=5.5pt]}
]
    (pathlabel.east) -- ($(x0)!0.48!(x1)$);

\node[
    anchor=west
]
    (hglabel) at (4.0,6.55)
    {$H_g\subseteq T_x$};

\draw[
    black,
    line width=0.8pt,
    -{Stealth[length=5pt,width=6pt]}
]
    (hglabel.west) -- (3.45,6.55);

\node[
    anchor=west
]
    (rholabel) at (0.8, 4.7)
    {$\rho_j^{\rm loc}(g)$};

\draw[
    black,
    line width=0.85pt,
    -{Stealth[length=4.5pt,width=5.5pt]}
]
    (rholabel.south west) -- ($(xj)!0.55!(xjp)$);

\node[
    anchor=west,
    align=center,
    text width=4.7cm
]
    (agree-label) at (3.75,1.50)
    {$(H_g)_{x_j}$ and $T_{x_j}$ agree to depth at least
     $m+j$ below $x_j$};

\draw[
    black,
    line width=0.85pt,
    -{Stealth[length=4.5pt,width=5.5pt]}
]
    (agree-label.west) -- (1.85,1.50);

\node[
    anchor=east,
    align=left,
    fill=white,
    inner sep=2pt
]
    (glabel) at (-0.75,0.30)
    {$g=(x_{n-1},x_n)$\\
     depth $n$ below $x$};

\draw[
    black,
    line width=0.8pt,
    -{Stealth[length=5pt,width=6pt]},
]
    (glabel.east) -- (gmid);

\end{tikzpicture}

\caption{Schematic setup of Lemma~\ref{lem:annular_sum}. The outer light-gray
region denotes the subtree $H_g$ rooted at $x$ and contains the distinguished path to
$g=(x_{n-1},x_n)$. For the displayed index $j$, the darker inner region
indicates that $(H_g)_{x_j}$ and $T_{x_j}$ agree to depth at least
$m+j$ below $x_j$. The quantity $\rho_j^{\rm loc}(g)$ is the local
transition factor from $x_j$ to $x_{j+1}$ computed in $H_g$.}
\end{figure}
\begin{proof}
We use the notation of Definition~\ref{df:q_jacobian_coefficients}. Since every
rooted tree considered below has forward degree at most $D-1$, we have
$p_u^S\ge 1/D$, and hence \eqref{eq:Jxy_def} gives $J_{u,v}^S\le D.$

Let $\eta:=1-(2D)^{-D}$. We first need the following contraction estimate for a subset of children. If $I\subseteq \mathcal C_T(u)$ and each $y\in I$ is a non-leaf vertex
in $T_u$, then
\begin{equation}\label{eq: bound_J_u_y_sum}
\sum_{y\in I}J_{u,y}^{T}\le \eta .
\end{equation}
Indeed, $p_z^T\ge 1/D$ for every $z$, while for non-leaf $y$,
\[ p_y^T = \E\Big[ \frac{1}{1+\sum_{z\in\mathcal C_T(y)}\zeta_z} \Big]
\le 1-\frac{1}{2D}. \]
Thus Lemma~\ref{lem:q_jacobian_contraction} applies.

By Lemma~\ref{lem:q_jacobian_stability}, if $S\subseteq T_u$ agrees with
$T_u$ to depth $\ell$ below $u$, then
\[\sum_{v\in\mathcal C_T(u)} |J_{u,v}^S-J_{u,v}^{T}| \le C_0 \gamma^\ell . \]
Choose $m_0$ so that
\[ (D-1)C_0 \gamma^{m_0}\le \frac{1-\eta}{2}, \qquad
\theta:=\frac{1+\eta}{2}. \]
Then, for all $m\ge m_0$ and $s\ge0$,
\begin{equation}\label{eq:para_m0_choice}
  \eta+(D-1)C_0 \gamma^{m+s}\le \theta<1.  
\end{equation}
Fix $n \ge 1$. For a vertex $u$ at depth $s$ below $x$, $1\le s\le n-1$,
let
\[ \mathcal A_s(u) :=
\sum_{g} \prod_{i=s}^{n-1} J_{x_i(g),x_{i+1}(g)}^{H_g}, \]
where the sum is over the edges $g$ at depth $n$ below $x$ whose path passes
through $u$. We claim that
\begin{equation}\label{eq:local_sum_estimate}
\mathcal A_s(u)\le C_1\theta^{n-s-1}, \qquad 1\le s\le n-1. 
\end{equation}
This follows by backward induction on $s$. For $s=n-1$, since  $J_{u,v}^S\le D $, we have
\[ \mathcal A_{n-1}(u) \le \sum_{y\in\mathcal C_T(u)}D \le D(D-1), \]
so the claim holds after choosing $C_1\ge D(D-1)$.

Assume the claim at level $s+1$, where $1\le s\le n-2$.  Let $I(u,s)$ be the set of all children of $u$ which
lead to an edge at level $n$. Each $y\in I(u,s)$ is a non-leaf vertex in $T_u$.
Moreover, for every edge $g$ below $y$, the subtree $(H_g)_u$ agrees
with $T_u$ to depth at least $m+s$ below $u$. Applying Lemma~\ref{lem:q_jacobian_stability} with
$S=(H_g)_u$ and with the ambient descendant subtree $T_u$, and using the
fact that $(H_g)_u$ agrees with $T_u$ to depth at least $m+s$ below
$u=x_s$, we obtain
\[ J_{u,y}^{H_g} \le J_{u,y}^{T}+C_0 \gamma^{m+s}. \]
Using the induction hypothesis,
\[ \begin{aligned}
\mathcal A_s(u) 
&\le \sum_{y\in I(u,s)} \left(J_{u,y}^{T}+C_0 \gamma^{m+s}\right) \mathcal A_{s+1}(y)  \\
&\le C_1\theta^{n-s-2} \left(\eta+(D-1)C_0 \gamma^{m+s}\right) \\
&\le C_1\theta^{n-s-1},
\end{aligned}
\]
where the second inequality follows from \eqref{eq: bound_J_u_y_sum} and the last inequality follows from \eqref{eq:para_m0_choice}. This yields the claim \eqref{eq:local_sum_estimate}.

It remains to compare the random transition product with the deterministic
$q$-Jacobian product. Fix an edge $g$ and compute all quantities inside
$H_g$. For $u\in H_g$, write
\[ \B^{H_g}(u):=\B(u,(H_g)_u), \qquad p_u^{H_g}:=\E e^{-\B^{H_g}(u)}. \]
Since
$\rho_0^{\rm loc}(g)\le \ind_{\{w_{(x_0x_1)}>\B^{H_g}(x_1)\}}$,
conditioning on the descendant subtrees below the children of $x_0$ gives
\[ \E\prod_{j=0}^{n-1}\rho_j^{\rm loc}(g) \le
\E\Big[ e^{-\B^{H_g}(x_1)} \prod_{j=1}^{n-1}\rho_j^{\rm loc}(g) \Big]. \]
For $1\le j\le n-1$ and any nonnegative $F$ depending only on the descendant
tree below $x_{j+1}$ in $H_g$, the same conditioning and exponential
memoryless calculation as in Lemma~\ref{lem:tree_boundary_tail} yields
\begin{equation} \label{id:J_iter}
   \frac{ \E\big[ e^{-\B^{H_g}(x_j)} \rho_j^{\rm loc}(g)F \big]}{p_{x_j}^{H_g}}=
J_{x_j,x_{j+1}}^{H_g} \frac{ \E\big[ e^{-\B^{H_g}(x_{j+1})}F \big] }{ p_{x_{j+1}}^{H_g}}. 
\end{equation}
Indeed, if the chosen child and $k$ sibling children are active, the transition
factor contributes $1/(k+1)$, while
$\E e^{-\max(E_1,\ldots,E_{k+1})}=1/(k+2)$. Averaging over the sibling active
indicators gives exactly the coefficient in \eqref{eq:Jxy_def}.

Iterating the  identity \eqref{id:J_iter}, with $F=
\prod_{i=j+1}^{n-1}\rho_i^{\rm loc}(g)$
at step $j$, gives
\begin{align*}
 \E\Big[ e^{-\B^{H_g}(x_1)} \prod_{j=1}^{n-1}\rho_j^{\rm loc}(g) \Big] 
 &= \Big(\prod_{j=1}^{n-1} \frac{p_{x_j}^{H_g}}{p_{x_{j+1}}^{H_g}}
J_{x_j,x_{j+1}}^{H_g} \Big) \E e^{-\B^{H_g}(x_n)}  \\
&= p_{x_1}^{H_g} \Big( \prod_{j=1}^{n-1} J_{x_j,x_{j+1}}^{H_g} \Big) 
\frac{ \E e^{-\B^{H_g}(x_n)} }{ p_{x_n}^{H_g} } \\
&= p_{x_1}^{H_g} \prod_{j=1}^{n-1} J_{x_j,x_{j+1}}^{H_g},
\end{align*}
where the final equality follows from the fact that $p_{x_n}^{H_g} = \E e^{-\B^{H_g}(x_n)}.$
Since $p_{x_1}^{H_g}\le1$, we obtain
\[ \E\prod_{j=0}^{n-1}\rho_j^{\rm loc}(g) \le \prod_{j=1}^{n-1} J_{x_j,x_{j+1}}^{H_g}, \]
with the product interpreted as $1$ when $n=1$.

If $n=1$, the desired sum is at most $D-1$, and this is absorbed into $C$.
For $n\ge2$, summing the last bound over all target edges at depth $n$ below
$x$ gives
\[ \sum_{g} \E\prod_{j=0}^{n-1}\rho_j^{\rm loc}(g) \le
\sum_{y\in\mathcal C_T(x)}\mathcal A_1(y) \le (D-1)C_1\theta^{n-2}. \]
Increasing $C$ completes the proof.
\end{proof}

\begin{lem}
\label{lem:local_alt_path}
There exist constants $C$ and
$\eta\in(0,1)$, depending only on $D$, such that the following holds. Let $G$ be a finite
tree of maximum degree at most $D$, let $f\in E(G)$, and let $R\ge r\ge 1$ be integers.
Then
\[ \sum_{e:\,d(e,f)=r} \prob\bigl(e\in \I_f(\bB_e^R(G))\bigr) \le C\eta^r . \]
\end{lem}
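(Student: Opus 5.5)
Fix $e$ with $d(e,f)=r$ and set $H_e:=\bB_e^R(G)$. This is a tree containing $f$, since $d(e,f)=r\le R$. The plan is to apply Lemma~\ref{lem:product_upper_bound_tree} inside $H_e$ and then sum over $e$ with Lemma~\ref{lem:annular_sum}.

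\textbf{Step 1: per-edge bound.} Delete $f$ from $H_e$ and let $T^{(e)}$ be the component containing $e$, rooted at the endpoint $x_0$ of $f$ closer to $e$. Lemma~\ref{lem:product_upper_bound_tree} gives
\[ \prob\bigl(e\in\I_f(H_e)\bigr)\le \E\Big[e^{-\B(x_0,T^{(e)})}\prod_{j=0}^{r-1}\rho_j(e)\Big]\le \E\prod_{j=0}^{r-1}\rho_j(e), \]
where $\rho_j(e)=\xi_j/(1+Z_j)$ is computed in the rooted tree $T^{(e)}$. The summands are indexed by edges $e$ at depth $r$ (edge distance) from $f$.

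\textbf{Step 2: match the setting of Lemma~\ref{lem:annular_sum}.} Let $T$ be the component of $G\setminus f$ on the relevant side, rooted at $x=x_0$; handle the two sides of $f$ separately, which costs a factor $2$. For $g=e$ at depth $n=r$ below $x$, take $H_g:=T^{(e)}$. This is a rooted subtree of $T_x$ containing the path $x_0\sim\cdots\sim x_r$, and then $\rho^{\rm loc}_j(g)=\rho_j(e)$.

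We must check the agreement hypothesis: $(H_g)_{x_j}$ agrees with $T_{x_j}$ to depth at least $m+j$ below $x_j$. A vertex $z$ at depth $t$ below $x_j$ is at distance at most $t+(r-j)$ from $e$, roughly, since $x_j$ is at distance $r-j$ from $e$ along the path. Hence it lies in $\bB_e^R(G)$ as long as $t\le R-r+j$. So the agreement depth is at least $R-r+j$, and the hypothesis holds with $m=R-r$.

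The difficulty is that $m=R-r$ may be smaller than $m_0$; in particular it is $0$ when $r=R$. I expect this to be the main obstacle.

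\textbf{Step 3: resolving the obstacle.} I would handle it by a shift. For $j\ge m_0$ the agreement depth $R-r+j\ge m_0+(j-m_0)$ has the right form, so apply the proof of Lemma~\ref{lem:annular_sum} starting at level $m_0$ instead of $0$:
\begin{itemize}
\item Bound $\rho_j\le1$ for $j<m_0$.
\item Sum over the at most $(D-1)^{m_0}$ vertices $u$ at depth $m_0$.
\item For each such $u$, apply the lemma in the subtree rooted at $u$, with $n=r-m_0$ and $m=m_0+R-r\ge m_0$.
\end{itemize}
This works because the agreement depth below $x_{m_0+i}$ is at least $R-r+m_0+i$.

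One check is needed in this step. The first factor $\rho_{m_0}$ computed in the conditioned structure must be bounded the same way. Lemma~\ref{lem:annular_sum} only uses $\rho_0\le\xi$ and the subsequent $J$-iteration, and these depend only on the descendant trees, which are what we control. So the lemma applies verbatim to the subtree rooted at $u$ with $H_g$ restricted to $(H_g)_u$. The factor $e^{-\B(x_0,\cdot)}$ was already dropped.

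\textbf{Step 4: conclusion.} Combining the pieces,
\[ \sum_{e:\,d(e,f)=r}\prob\bigl(e\in\I_f(\bB_e^R(G))\bigr)\le 2(D-1)^{m_0}C\theta^{r-m_0}. \]
The case $r\le m_0$ is absorbed into the constant, since the sum has at most $CD^{m_0}$ terms, each at most $1$. This gives the claim with $\eta=\theta$.
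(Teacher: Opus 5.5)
Your proposal is correct and follows essentially the same route as the paper: bound each term via Lemma~\ref{lem:product_upper_bound_tree} inside $\bB_e^R(G)$, drop the first $m_0$ transition factors, sum over the at most $D^{m_0}$ branch points at depth $m_0$, and apply Lemma~\ref{lem:annular_sum} in each descendant subtree. The only cosmetic difference is that the paper takes $m=m_0$ there, using only $r\le R$, while you take $m=m_0+R-r\ge m_0$; both satisfy the agreement hypothesis.
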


\begin{proof}
Write $f=(o_1 o_2)$. After deleting $f$, root the two components at $o_1$ and
$o_2$. It suffices to prove the estimate for $e$ belonging to one rooted component, and then
multiply the final constant by $2$.

Fix one rooted component, with root $o$, and let $e$ be a descendant edge with
$d(e,f)=r$. Put $H_e:=\bB_e^R(G)$. Since $R\ge r$, we have $f\in H_e$.
Let $T_e$ be the component of $H_e\setminus f$ containing $e$, rooted at
$o$. If we write the path from $o$ to $e$ as $o=x_0,x_1,\ldots,x_r$ with $e=(x_{r-1},x_r)$,
then Lemma~\ref{lem:product_upper_bound_tree}, applied inside $H_e$, gives
\[ \prob(e\in\I_f(H_e)) \le \E\Big[ e^{-\B(o,T_e)} \prod_{j=0}^{r-1}\rho_j^e \Big], \]
where
\[ \rho_j^e := \frac{\xi_{j,x_{j+1}}^e} {1+\sum_{y\in\mathcal C_{T_e}(x_j),\,y\ne x_{j+1}}\xi_{j,y}^e}, \qquad \xi_{j,y}^e := \ind_{\{w_{(x_jy)}>\B(y,(T_e)_y)\}} . \]
Let $m_0,C_0,\theta$ be the constants  from
Lemma~\ref{lem:annular_sum}. The cases $r\le m_0$ are
absorbed into the constant. For $r>m_0$, since
$e^{-\B(o,T_e)}\le1$ and $0\le\rho_j^e\le1$,
\[ \prob(e\in\I_f(H_e)) \le \E\prod_{j=m_0}^{r-1}\rho_j^e . \]

Fix an edge $h$ at distance $m_0$ from $f$ in the chosen rooted component, and
let $x$ be its endpoint farther from $f$. We sum over all  edges $e$
below $h$ with $d(e,f)=r$. Set
\[ n:=r-m_0, \qquad K_e:=H_e\cap T_x, \]
where $T_x$ is the full descendant subtree below $x$ in the chosen rooted
component. Regard $K_e$ as rooted at $x$. If we write the path from $x$ to $e$ as $x=y_0,y_1,\ldots,y_n$ with $e=(y_{n-1} y_n)$, then the factors $\rho_{m_0+j}^e$, $0\le j\le n-1$, are exactly the local
transition factors from $y_j$ to $y_{j+1}$ computed inside $K_e$.

We check the hypothesis of Lemma~\ref{lem:annular_sum} with
full tree $T_x$, local subtree $K_e$, and $m=m_0$. Fix $0\le j\le n-1$. Any
descendant of $y_j$ at depth at most $m_0+j$ has distance to $e$ at most $(m_0+j)+(n-j)=r\le R$.
Hence it belongs to $H_e$, and since it is below $x$, it belongs to $K_e$.
Thus the descendant subtree of $K_e$ rooted at $y_j$ agrees with
$(T_x)_{y_j}$ to depth at least $m_0+j$.

Therefore Lemma~\ref{lem:annular_sum} gives, for each fixed
$h$,
\[ \sum_{\substack{e:\,e\text{ below }h\\ d(e,f)=r}} \prob(e\in\I_f(H_e))
\le \sum_{\substack{e:\,e\text{ below }h\\ d(e,f)=r}} \E\prod_{j=m_0}^{r-1}\rho_j^e
\le C_0\theta^{r-m_0}. \]
There are at most $D^{m_0}$ choices of $h$ in the chosen rooted component.
Thus
\[\sum_{\substack{e:\,e\text{ in the chosen component}\\ d(e,f)=r}}
\prob(e\in\I_f(\bB_e^R(G))) \le D^{m_0}C_0\theta^{r-m_0}. \]
Multiplying by $2$ for the two components of $G\setminus f$ and enlarging
the constant to cover $r\le m_0$ gives
\[ \sum_{e:\,d(e,f)=r} \prob(e\in\I_f(\bB_e^R(G))) \le
C\eta^r \]
with $\eta:=\theta\in(0,1)$.
\end{proof}

\begin{proof}
    We are now ready to prove Proposition~\ref{prop:annular_delete_edge}. For
$r\ge 1$, set $L_r:=\{e\in E(G):d(e,f)=r\}$.
Fix $e\in L_r$ with $r\le R$, and let $H_e:=\bB_e^R(G)$. Since $G$ is a tree
and $d(e,f)\le R$, we have $f\in H_e$. Moreover, $\bB_e^R(G\setminus f)$ is
the component of $H_e\setminus f $ containing $e$. Since maximum matchings
on disjoint components decouple, we have $X_e^R(G)=\ind_{\{e\in \M_{H_e}\}}$ and $X_e^R(G\setminus f) = \ind_{\{e\in \M_{H_e\setminus f }\}}$.
Hence, by \eqref{eq:symmetric_difference_equals_If},
\[ A_e := \{X_e^R(G)\ne X_e^R(G\setminus f)\} = \{e\in \I_f(H_e)\}. \]
By Lemma~\ref{lem:local_alt_path},  for
all $1\le r\le R$,
\begin{equation}
\label{eq:annular_Ae}
\sum_{e\in L_r}\prob(A_e) \le 
\sum_{e\in L_r} \prob\bigl(e\in \I_f(\bB_e^R(G))\bigr) \le C \eta^r .
\end{equation}
Let $p>1$ and set $a=1-1/p$. By Hölder's inequality,
\[ \sum_{e\in L_r}\E[w_e\ind_{A_e}] \leq \Big(\sum_{e \in L_r} \E w_e^p \Big)^{1/p} \Big( \sum_{e \in L_r} \prob(A_e)\Big)^a
\le (\E w_e^p)^{1/p} |L_r|^{1 - a} \Big( \sum_{e \in L_r} \prob(A_e)\Big)^a.\]
Using $|L_r|\le C D^r$ and \eqref{eq:annular_Ae}, this gives
\[ \sum_{e\in L_r}\E[w_e\ind_{A_e}] \le
C D^{(1-a)r}\eta^{ar} = C\lambda^r, \qquad \lambda:=D^{1-a}\eta^a . \]
We can make $\lambda<1$ by choosing $p$ large enough.

Finally,
\begin{equation*}
\begin{aligned}
\Big| \sum_{e: \delta R\le d(e,f)\le R} \left( \E w_e X_e^R(G)
- \E w_e X_e^R(G\setminus f) \right) \Big|
&\le \sum_{r=\lceil\delta R\rceil}^{ R } \sum_{e\in L_r}\E[w_e\ind_{A_e}]  \\
&\le \sum_{r=\lceil\delta R\rceil}^{ R } C_1\lambda^r \le C_2\lambda^{\delta R}
= C_2e^{-cR},
\end{aligned}
\end{equation*}
where $c:=\delta(-\log\lambda)>0$. This proves the proposition.
\end{proof}

\subsection{Proof of Proposition~\ref{prop:far_delete_edge_tail}}

\begin{lem}
\label{prop:boundary_tail}
Let $G$ be a finite simple graph with maximum degree at most $D$. Assume that
$\bB_e^{L}(G)$ is a tree for every $e\in E(G)$. Then there exist constants
$K\ge1$, $C>0$, and $\theta\in(0,1)$, depending only on $D$, such that for
every $f\in E(G)$ and every integer $R$ with $4\le R\le L/K$,
\[ \prob\bigl(\I_f(G)\textnormal{ reaches }\partial\bB_f^R(G)\bigr) \le C\theta^R .\]
\end{lem}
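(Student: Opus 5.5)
The plan is to localize the event to the tree $H:=\bB_f^{KR}(G)$ and then apply Lemma~\ref{lem:tree_boundary_tail} on $H$. Here $K$ is a large constant depending only on $D$, to be fixed at the end. Since $KR\le L$ and $\bB_f^{L}(G)$ is a tree, $H$ is a finite tree of maximum degree at most $D$. Set $r:=(K-1)R$.

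\emph{Step 1: edgewise comparison.} Let $e$ be any edge with $d(e,f)\le R$. Then $\bB_e^{r}(G)\subseteq H$, and consequently $\bB_e^{r}(H)=\bB_e^{r}(G)$. This ball is a tree, since $r\le L$. We have $r\ge 3$ once $K\ge 2$ and $R\ge4$. Applying \eqref{eq:corr_decay_ind} twice, once in $G$ and once in $H$, we get
\[
\prob\bigl(X_e(G)\neq X_e(H)\bigr)\le \prob\bigl(X_e(G)\ne X_e(\bB_e^r(G))\bigr)+\prob\bigl(X_e(H)\ne X_e(\bB_e^r(H))\bigr)\le 2Ce^{-cr}.
\]
The same argument applies to $G\setminus f$ and $H\setminus f$. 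Their $r$-balls around $e$ are subgraphs of the previous trees, hence trees. When $e$ lies in a component not containing $f$, the argument applies after restricting to components, since matchings decouple across components. This gives $\prob(X_e(G\setminus f)\ne X_e(H\setminus f))\le 2Ce^{-cr}$.

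\emph{Step 2: union bound.} There are at most $C_DD^{R}$ edges within distance $R$ of $f$. Let $\mathcal E$ be the event that, for every such edge $e$, both $X_e(G)=X_e(H)$ and $X_e(G\setminus f)=X_e(H\setminus f)$. Then
\[
\prob(\mathcal E^c)\le C D^{R}e^{-c(K-1)R}.
\]
Choosing $K$ with $c(K-1)>2\log D$ makes this at most $Ce^{-c'R}$.

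\emph{Step 3: transfer the path event and conclude.} On $\mathcal E$, the sets $\M_G\triangle\M_{G\setminus f}$ and $\M_H\triangle\M_{H\setminus f}$ coincide on all edges within distance $R$ of $f$. Suppose $\I_f(G)$ reaches $\partial\bB_f^R(G)$. Then there is a connected chain of edges of $\M_G\triangle \M_{G\setminus f}$, lying inside $\bB_f^R(G)$, that starts at $f$ and reaches the boundary. On $\mathcal E$, this same chain lies in $\M_H\triangle\M_{H\setminus f}$. Hence $\I_f(H)$ reaches $\partial \bB_f^R(H)=\partial\bB_f^R(G)$; the boundary spheres agree because the two balls coincide. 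Lemma~\ref{lem:tree_boundary_tail} applied to the tree $H$ bounds the probability of this event by $C\eta^R$. Adding $\prob(\mathcal E^c)$ gives the claim with $\theta:=\max(\eta,e^{-c'})$.

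\emph{Main obstacle.} The delicate point is Step 1. One must check that every edge near $f$ has an $r$-ball that is a tree in all four graphs $G$, $H$, $G\setminus f$ and $H\setminus f$, and that the correlation-decay estimate \eqref{eq:corr_decay_ind} is valid for the disconnected graphs $G\setminus f$ and $H\setminus f$. The latter holds by applying it to the relevant connected component.
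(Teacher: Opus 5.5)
Your proof is correct and follows essentially the same route as the paper. You localize to a tree ball around $f$ of radius proportional to $KR$, and use correlation decay with a union bound over the $O(D^{R})$ edges near $f$ to show that $\M_G\triangle\M_{G\setminus f}$ agrees with its tree analogue there. You then invoke Lemma~\ref{lem:tree_boundary_tail} on that tree, and the only differences are cosmetic: the paper uses the ball of radius $KR/4$ with comparison radius $KR/4-R$ and picks $K$ so that $D\gamma^{K/4-1}\le\eta$, whereas you use radius $KR$ and comparison radius $(K-1)R$.
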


\begin{proof}
By Lemma~\ref{lem:tree_boundary_tail}, there are constants $C_0>0$ and
$\eta\in(0,1)$ such that, for every finite tree $T$ of maximum degree at most
$D$, every $h\in E(T)$, and every $r\ge4$,
\begin{equation} \label{eq:tree_tail_for_prop}
\prob\bigl(\I_h(T)\text{ reaches }\partial\bB_h^r(T)\bigr) \le C_0\eta^r.
\end{equation}
Also, Theorem~\ref{thm:vertex_bonus} implies that there are constants
$C_1>0$ and $\gamma\in(0,1)$ such that, whenever $\bB_g^r(F)$ is a tree,
\begin{equation} \label{eq:local_global_cd_for_prop}
\prob\big( \ind_{\{g\in\M_F\}} \ne \ind_{\{g\in\M_{\bB_g^r(F)}\}} \big) \le C_1\gamma^r.
\end{equation}
For disconnected $F$, this is applied to the component containing $g$.

Choose a positive integer $K$, divisible by $4$, such that
\begin{equation} \label{eq:alpha_choice_for_prop}
D\gamma^{K/4-1}\le \eta .
\end{equation}
Fix $f=(uv)$ and $R$ with $4\le R\le L/K$. Put
\[ S:=\frac{KR}{4}, \qquad H:=\bB_f^R(G), \qquad T:=\bB_f^S(G). \]
Since $S\le L$ and $\bB_f^L(G)$ is a tree, $T$ is a tree. Let $E_R:=\{\I_f(G)\text{ reaches }\partial\bB_f^R(G)\}.$
Define
\[ \mathcal A_R := \left\{ \M_G\cap E(H)=\M_T\cap E(H) \right\} \cap
\left\{ \M_{G\setminus f}\cap E(H)=\M_{T\setminus f}\cap E(H) \right\}.\]
On $E_R\cap\mathcal A_R$, some edge of $H$ incident to
$\partial\bB_f^R(G)$ lies in $\M_T\triangle\M_{T\setminus f}$ and hence,
by \eqref{eq:symmetric_difference_equals_If}, belongs to $\I_f(T)$.
Therefore,
\begin{equation} \label{eq:event_reduction_for_prop}
E_R \subseteq \{\I_f(T)\text{ reaches }\partial\bB_f^R(G)\} \cup \mathcal A_R^c .
\end{equation}
Let $m:=S-R=\big(K/4-1\big)R.$
For every $g\in E(H)$, $\bB_g^m(G)=\bB_g^m(T)$, since any vertex within distance $m$ of $g$ has distance at most $R+m=S$ from
$f$. The same equality holds after deleting $f$, in the component containing
$g$. These balls are trees. Thus, by \eqref{eq:local_global_cd_for_prop} and
the triangle inequality,
\[ \prob( \ind_{\{g\in\M_G\}} \ne \ind_{\{g\in\M_T\}} ) \le 2C_1\gamma^m, \]
and the same bound holds with $G,T$ replaced by $G\setminus f,T\setminus f$.
Therefore,
\begin{equation*}
\prob(\mathcal A_R^c) \le 4C_1 |E(H)|\gamma^m.
\end{equation*}
Since $|E(H)|\le C_2D^{R+1}$, \eqref{eq:alpha_choice_for_prop} gives
\begin{equation} \label{eq:good_error_final_for_prop}
\prob(\mathcal A_R^c) \le C_3\bigl(D\gamma^{K/4-1}\bigr)^R \le C_3\eta^R.
\end{equation}

Since $R<S$, we have $\bB_f^R(T)=\bB_f^R(G)$. Hence, by
\eqref{eq:tree_tail_for_prop},
\[\prob\bigl(\I_f(T)\text{ reaches }\partial\bB_f^R(G)\bigr) =
\prob\bigl(\I_f(T)\text{ reaches }\partial\bB_f^R(T)\bigr) \le C_0\eta^R . \]
Combining this with \eqref{eq:event_reduction_for_prop} and
\eqref{eq:good_error_final_for_prop}, we get $\prob(E_R) \le (C_0+C_3)\eta^R$.
Thus the lemma holds with $\theta:=\eta$ and $C:=C_0+C_3$.
\end{proof}

\begin{proof}[Proof of Proposition~\ref{prop:far_delete_edge_tail}]
Let $K,C_0,\theta$ be the constants from
Lemma~\ref{prop:boundary_tail}, and set $\kappa:=-\log\theta>0.$
Choose $A_0$ sufficiently large, depending only on $\delta$ and $D$, so that $A_0\ge 16K\delta$ and $\kappa A_0 \ge 16 \delta K \log D.$
Fix $A\ge A_0$. 
By \eqref{eq:symmetric_difference_equals_If}, $X_e(G)-X_e(G\setminus f)=0$ unless  $e\in \I_f(G)$.
Define
\[ Y_R := \sum_{e:\,d(e,f)\ge \delta R} w_e\bigl(X_e(G)-X_e(G\setminus f)\bigr). \]
The left-hand side of \eqref{eq:far_delete_edge_tail} is $|\E Y_R|$. Since
$0\le \W_G-\W_{G\setminus f}\le w_f$, we have
\[ |Y_R| = \Big|\W_G-\W_{G\setminus f} - \sum_{e: d(e, f) < \delta R} w_e\bigl(X_e(G)-X_e(G\setminus f)\bigr) \Big| \le w_f+\sum_{e\in \I_f(G): d(e,f)<\delta R}w_e . \]
Let $E_r:=\{ \I_f(G) \text{ reaches }\partial\bB_f^r(G)\}.$
If $Y_R \ne0$, then $E_{\lceil\delta R\rceil}$ occurs, and so
\[|Y_R| \le \ind_{E_{\lceil\delta R\rceil}} \Big( w_f+\sum_{e\in \I_f(G) : d(e,f)<\delta R}w_e \Big).\]
\begin{figure}[ht]
\centering
\scalebox{0.7}{%
\begin{tikzpicture}[
    line cap=round,
    line join=round,
    cone/.style={draw=black!55,line width=0.85pt,dash pattern=on 2.2pt off 2pt},
    shell/.style={draw=black!55,line width=0.8pt},
   shortpath/.style={draw=black!55,line width=1.45pt},
    longpath/.style={draw=black!80,line width=1.45pt},
    lab/.style={font=\small,fill=white,inner sep=1.3pt},
    pt/.style={circle,fill=black,inner sep=1.35pt}
]

\def\angL{205}
\def\angR{335}
\def\rd{1.55}
\def\rS{3.35}
\def\rA{5.00}

\coordinate (x0) at (0,0);
\coordinate (xm1) at (0,0.62);

\path[fill=blue!12!gray, opacity=0.10]
    (x0) --
    ($(x0)+(\angL:\rd)$)
    arc[start angle=\angL,end angle=\angR,radius=\rd]
    -- cycle;

\coordinate (x0) at (0,0);
\coordinate (xm1) at (0,0.62);

\draw[line width=1.0pt] (xm1) -- (x0);
\node[pt] at (xm1) {};
\node[pt] at (x0) {};
\node[font=\small, right=1pt] at ($(xm1)!0.55!(x0)$) {$f$};

\draw[cone] (x0) -- ($(x0)+(\angL:\rA)$);
\draw[cone] (x0) -- ($(x0)+(\angR:\rA)$);

\draw[shell] ($(x0)+(\angL:\rd)$)
    arc[start angle=\angL,end angle=\angR,radius=\rd];
\draw[shell] ($(x0)+(\angL:\rS)$)
    arc[start angle=\angL,end angle=\angR,radius=\rS];
\draw[shell] ($(x0)+(\angL:\rA)$)
    arc[start angle=\angL,end angle=\angR,radius=\rA];

\node[lab] at ($(x0)+(339:\rd)+(0.12,0.12)$) {$\delta R$};
\node[lab] at ($(x0)+(339:\rS)$) {$S$};
\node[lab] at ($(x0)+(339:\rA)$) {$AR$};

\draw[shortpath]
  plot[smooth,tension=0.75] coordinates {
    (x0)
    (-0.55,-0.55)
    (-0.96,-0.96)
    (-0.72,-1.34)
    (-1.34,-1.72)
    (-1.55,-2.05)
    (-1.30,-2.34)
    (-1.70,-2.58)
    (-2.10,-2.35)
  };

\draw[longpath]
  plot[smooth,tension=0.58] coordinates {
    (x0)
    (0.22,-0.55)
    (0.50,-1.12)
    (0.38,-1.48)
    (0.88,-2.02)
    (0.80,-2.44)
    (1.20,-3.02)
    (1.14,-3.48)
    (1.55,-4.22)
    (1.72,-5.28)
    (0.30,-6.32)
    (-1.15,-5.46)
    (-1.90,-4.84)
    (-1.45,-4.08)
    (-1.18,-3.55)
    (-1.28,-3.08)
    (-0.80,-2.48)
    (-0.88,-1.95)
    (-0.32,-1.15)
    (-0.06,-1.63)
    (0.35,-2.24)
    (0.22,-2.75)
    (0.70,-3.25)
    (0.62,-3.86)
    (0.96,-5.20)
    (2.18,-5.32)
    (3.65,-4.66)
    (2.55,-3.80)
    (2.10,-3.28)
    (2.26,-2.78)
    (1.55,-2.28)
    (1.38,-1.76)
    (0.65,-1.02)
    (0.95,-0.89)
  };
\end{tikzpicture}}
\caption{Two possible geometries of $\I_f(G)$ reaching $\partial\bB_f^{\delta R}(G)$. The shorter path stops before reaching $\partial\bB_f^{S}(G)$, while the longer path crosses $\partial\bB_f^{S}(G)$ and returns to $\bB_f^{\delta R}(G)$ a couple of times through cycles outside $\bB_f^{AR}(G)$.}
\label{fig:far_delete_edge_tail_cone}
\end{figure}

We first treat large $R$. Set $S:=\left\lfloor \tfrac{AR}{4K}\right\rfloor.$
For all large enough $R$, uniformly in $A\ge A_0$,
\[ 4 \le \lceil\delta R\rceil \le \frac{AR}{K},
\qquad S\ge4, \qquad S\ge2 \lceil\delta R\rceil,
\qquad S\ge\frac{AR}{8K}, \qquad S\le \frac{AR}{K}. \]
Since $S\le AR$ and $\bB_f^{AR}(G)$ is a tree, $\bB_f^S(G)$ is a tree.
Applying Lemma~\ref{prop:boundary_tail} with $L=AR$ gives
\begin{equation} \label{eq:boundary_tail_used_far}
\prob(E_r) \le C_0\theta^r = C_0e^{-\kappa r}, \qquad r \in \{\lceil\delta R\rceil,S \}.
\end{equation}
Let $Z_R:=\max_{g\in E(\bB_f^{\lceil\delta R\rceil}(G))} w_g$.
On $E_{\lceil\delta R\rceil}\setminus E_S$, the component $\I_f(G)$ is contained in the tree
$\bB_f^S(G)$, hence is a path. Therefore
\[ w_f+\sum_{\substack{e\in \I_f(G):\\ d(e,f)<\delta R }}w_e \le
(2\lceil\delta R\rceil +2)Z_R. \]
On $E_S$, we use the crude bound
\[ w_f+\sum_{\substack{e\in \I_f(G):\\ d(e,f)<\delta R}}w_e \le 
CD^{\lceil\delta R\rceil} Z_R . \]
Thus
\begin{equation*}
|Y_R| \le (2\lceil\delta R\rceil+2)Z_R\ind_{E_{\lceil\delta R\rceil}}
+ CD^{\lceil\delta R\rceil}Z_R\ind_{E_S}.
\end{equation*}
Since $|E(\bB_f^{\lceil\delta R\rceil}(G))|\le CD^{\lceil\delta R\rceil}$, by Lemma~\ref{lem: exp}, $\E Z_R^2\le C(1+\lceil\delta R\rceil)^2$. Together with Cauchy-Schwarz and \eqref{eq:boundary_tail_used_far}, we obtain
\[ \E\big[(2\lceil\delta R\rceil+2)Z_R\ind_{E_{\lceil\delta R\rceil}}\big]
\le CR^2e^{-\kappa\lceil\delta R\rceil/2} \le Ce^{-c_1R}, \]
where $c_1>0$ depends only on $\delta$ and $D$. Similarly,
\[ \begin{aligned} \E\big[CD^{\lceil\delta R\rceil} Z_R \ind_{E_S}\big]
&\le CR\exp\Big(\lceil\delta R\rceil \log D-\frac{\kappa S}{2}\Big )  \\
&\le CR\exp\Big( (\delta R+1)\log D-\frac{\kappa A}{16K}R\Big)
\le Ce^{-c_2R}, 
\end{aligned}\]
for some $c_2>0$, by the choice of $A_0$. Hence, for all large $R$,
\[ |\E Y_R| \le \E|Y_R| \le Ce^{-cR}, \qquad c:=\min\{c_1,c_2\}. \]

For the remaining bounded range of $R$,
\[ |Y_R| \le w_f+\sum_{e:\,d(e,f)<\delta R }w_e \]
implies $\E|Y_R|\le C$, after enlarging $C$. This proves
Proposition~\ref{prop:far_delete_edge_tail}.
\end{proof}

\medskip\noindent\textbf{Declaration on the use of AI.}
The TikZ figures were generated with the assistance of ChatGPT (OpenAI). ChatGPT was also used in part for grammar and language editing and for formatting references.

\bibliographystyle{plain}
\bibliography{matchrefnew}

@misc{lamsen2026,
  author        = {Lam, Wai-Kit and Sen, Arnab},
  title         = {Correlation decay for maximum weight matchings on sparse graphs},
  year          = {2026},
  note          = {arXiv:2511.18861},
  eprint        = {2511.18861},
  archivePrefix = {arXiv},
  primaryClass  = {math.PR},
  url           = {https://arxiv.org/abs/2511.18861},
}

@incollection{AldousSteele,
  author        = {Aldous, David and Steele, J. Michael},
  title         = {The objective method: probabilistic combinatorial optimization and local weak
                   convergence},
  editor        = {Kesten, Harry},
  booktitle     = {Probability on Discrete Structures},
  series        = {Encyclopaedia of Mathematical Sciences},
  volume        = {110},
  pages         = {1--72},
  publisher     = {Springer},
  address       = {Berlin, Heidelberg},
  year          = {2004},
}

@article{Gamarnik,
  author        = {Gamarnik, David and Nowicki, Tomasz and Swirszcz, Grzegorz},
  title         = {Maximum weight independent sets and matchings in sparse random graphs. {E}xact
                   results using the local weak convergence method},
  journal       = {Random Structures Algorithms},
  volume        = {28},
  number        = {1},
  pages         = {76--106},
  year          = {2006},
}

@article{enriquez2025optimal,
  author        = {Enriquez, Nathana{\"e}l and Liu, Mike and M{\'e}nard, Laurent and Perchet,
                   Vianney},
  title         = {Optimal unimodular matchings},
  journal       = {Probab. Theory Related Fields},
  volume        = {194},
  number        = {3--4},
  pages         = {1849--1916},
  year          = {2026},
}

@article{Cao,
  author        = {Cao, Sky},
  title         = {Central limit theorems for combinatorial optimization problems on sparse
                   {Erd{\H{o}}s--R{\'{e}}nyi} graphs},
  journal       = {Ann. Appl. Probab.},
  volume        = {31},
  number        = {4},
  pages         = {1687--1723},
  year          = {2021},
}

@misc{SturmWemheuer,
  author        = {Sturm, Anja and Wemheuer, Moritz},
  title         = {A central limit theorem for functions on weighted sparse inhomogeneous random
                   graphs},
  year          = {2024},
  note          = {arXiv:2404.12740},
  eprint        = {2404.12740},
  archivePrefix = {arXiv},
  primaryClass  = {math.PR},
  url           = {https://arxiv.org/abs/2404.12740},
}

@article{KrishnanRay,
  author        = {Krishnan, Kesav and Ray, Gourab},
  title         = {Uniqueness and {CLT} for the ground state of the disordered monomer-dimer model
                   on {$\mathbb{Z}^d$}},
  journal       = {Int. Math. Res. Not. IMRN},
  volume        = {2025},
  number        = {12},
  pages         = {rnaf169},
  year          = {2025},
}

@inproceedings{KarpSipser,
  author        = {Karp, Richard M. and Sipser, Michael},
  title         = {Maximum matching in sparse random graphs},
  booktitle     = {22nd Annual Symposium on Foundations of Computer Science},
  pages         = {364--375},
  publisher     = {IEEE Computer Society},
  year          = {1981},
}

@article{AronsonFriezePittel,
  author        = {Aronson, Jonathan and Frieze, Alan and Pittel, Boris G.},
  title         = {Maximum matchings in sparse random graphs: {Karp--Sipser} revisited},
  journal       = {Random Structures Algorithms},
  volume        = {12},
  number        = {2},
  pages         = {111--177},
  year          = {1998},
}

@article{ElekLippner,
  author        = {Elek, G{\'a}bor and Lippner, G{\'a}bor},
  title         = {{Borel} oracles. {A}n analytical approach to constant-time algorithms},
  journal       = {Proc. Amer. Math. Soc.},
  volume        = {138},
  number        = {8},
  pages         = {2939--2947},
  year          = {2010},
}

@article{BordenaveLelargeSalez,
  author        = {Bordenave, Charles and Lelarge, Marc and Salez, Justin},
  title         = {Matchings on infinite graphs},
  journal       = {Probab. Theory Related Fields},
  volume        = {157},
  number        = {1--2},
  pages         = {183--208},
  year          = {2013},
}

@phdthesis{Kreacic2017,
  author        = {Krea{\v{c}}i{\'c}, Eleonora},
  title         = {Some problems related to the {Karp--Sipser} algorithm on random graphs},
  school        = {University of Oxford},
  year          = {2017},
  url           = {https://ora.ox.ac.uk/objects/uuid:3b2eb52a-98f5-4af8-9614-e4909b8b9ffa},
}

@article{GlasgowKwanSahSawhney,
  author        = {Glasgow, Margalit and Kwan, Matthew and Sah, Ashwin and Sawhney, Mehtaab},
  title         = {A central limit theorem for the matching number of a sparse random graph},
  journal       = {J. Lond. Math. Soc.},
  volume        = {111},
  number        = {4},
  pages         = {e70101},
  year          = {2025},
}

@article{AthreyaYogeshwaran,
  author        = {Athreya, Siva and Yogeshwaran, D.},
  title         = {Central limit theorem for statistics of subcritical configuration models},
  journal       = {J. Ramanujan Math. Soc.},
  volume        = {35},
  number        = {2},
  pages         = {109--119},
  year          = {2020},
  eprint        = {1808.06778},
  archivePrefix = {arXiv},
  url           = {https://www.mathjournals.org/jrms/2020-035-002/2020-035-002-001.html},
}

@misc{enriquez2026optimal,
  author        = {Enriquez, Nathana{\"e}l and Liu, Mike and M{\'{e}}nard, Laurent and Perchet,
                   Vianney},
  title         = {Optimal matching under size priority},
  year          = {2026},
  note          = {arXiv:2601.20502},
  eprint        = {2601.20502},
  archivePrefix = {arXiv},
  primaryClass  = {math.PR},
  url           = {https://arxiv.org/abs/2601.20502},
}

@article{DeyKrishnan,
  author        = {Dey, Partha S. and Krishnan, Kesav},
  title         = {Disordered monomer-dimer model on cylinder graphs},
  journal       = {J. Stat. Phys.},
  volume        = {190},
  number        = {8},
  pages         = {146},
  year          = {2023},
}

@article{lamsenpositivetemp,
  author        = {Lam, Wai-Kit and Sen, Arnab},
  title         = {Central limit theorem in disordered monomer-dimer model},
  journal       = {Random Structures Algorithms},
  volume        = {66},
  number        = {1},
  pages         = {e21256},
  year          = {2025},
}

@article{BayatiGamarnikTetali,
  author        = {Bayati, Mohsen and Gamarnik, David and Tetali, Prasad},
  title         = {Combinatorial approach to the interpolation method and scaling limits in sparse
                   random graphs},
  journal       = {Ann. Probab.},
  volume        = {41},
  number        = {6},
  pages         = {4080--4115},
  year          = {2013},
}

@article{Salez,
  author        = {Salez, Justin},
  title         = {The interpolation method for random graphs with prescribed degrees},
  journal       = {Combin. Probab. Comput.},
  volume        = {25},
  number        = {3},
  pages         = {436--447},
  year          = {2016},
}

@article{Huang,
  author        = {Huang, Brice},
  title         = {Convergence of maximum bisection ratio of sparse random graphs},
  journal       = {Electron. Commun. Probab.},
  volume        = {23},
  pages         = {1--10},
  year          = {2018},
  note          = {Paper No.~51},
}

@article{BarbourRollin,
  author        = {Barbour, A. D. and R\"{o}llin, Adrian},
  title         = {Central limit theorems in the configuration model},
  journal       = {Ann. Appl. Probab.},
  volume        = {29},
  number        = {2},
  pages         = {1046--1069},
  year          = {2019},
}

@article{Pittel1990,
  author        = {Pittel, Boris},
  title         = {On tree census and the giant component in sparse random graphs},
  journal       = {Random Structures Algorithms},
  volume        = {1},
  number        = {3},
  pages         = {311--342},
  year          = {1990},
}

@misc{KangLiu2026,
  author        = {Kang, Mihyun and Liu, Mike},
  title         = {Uniqueness and locality of the ground state of the disordered monomer-dimer
                   models on independently weighted unimodular {Bienaym{\'{e}}--Galton--Watson}
                   trees},
  year          = {2026},
  note          = {arXiv:2603.19003},
  eprint        = {2603.19003},
  archivePrefix = {arXiv},
  primaryClass  = {math.PR},
  url           = {https://arxiv.org/abs/2603.19003},
}

@inproceedings{Chatterjee14,
  author        = {Chatterjee, Sourav},
  title         = {A short survey of {Stein}'s method},
  booktitle     = {Proceedings of the {I}nternational {C}ongress of {M}athematicians---{S}eoul
                   2014. {V}ol. {IV}},
  pages         = {1--24},
  publisher     = {Kyung Moon Sa},
  address       = {Seoul},
  year          = {2014},
}

@article{Janson,
  author        = {Janson, Svante},
  title         = {Random graphs with given vertex degrees and switchings},
  journal       = {Random Structures Algorithms},
  volume        = {57},
  number        = {1},
  pages         = {3--31},
  year          = {2020},
}

@article{Bollobas,
  author        = {Bollob\'{a}s, B\'{e}la},
  title         = {A probabilistic proof of an asymptotic formula for the number of labelled
                   regular graphs},
  journal       = {European J. Combin.},
  volume        = {1},
  number        = {4},
  pages         = {311--316},
  year          = {1980},
}

@article{chengoldsteinrollin,
  author        = {Chen, Louis H. Y. and Goldstein, Larry and R\"{o}llin, Adrian},
  title         = {{Stein}'s method via induction},
  journal       = {Electron. J. Probab.},
  volume        = {25},
  pages         = {1--49},
  year          = {2020},
  note          = {Paper No.~132},
}

@article{DS,
  author        = {Diaconis, Persi and Shahshahani, Mehrdad},
  title         = {Generating a random permutation with random transpositions},
  journal       = {Z. Wahrsch. Verw. Gebiete},
  volume        = {57},
  number        = {2},
  pages         = {159--179},
  year          = {1981},
}

@article{BobkovTetali,
  author        = {Bobkov, Sergey G. and Tetali, Prasad},
  title         = {Modified logarithmic {Sobolev} inequalities in discrete settings},
  journal       = {J. Theoret. Probab.},
  volume        = {19},
  number        = {2},
  pages         = {289--336},
  year          = {2006},
}

@article{Chatterjee19,
  author        = {Chatterjee, Sourav},
  title         = {A general method for lower bounds on fluctuations of random variables},
  journal       = {Ann. Probab.},
  volume        = {47},
  number        = {4},
  pages         = {2140--2171},
  year          = {2019},
}

@book{vdHofstadvol1,
  author        = {van der Hofstad, Remco},
  title         = {Random graphs and complex networks. {V}ol. 1},
  series        = {Cambridge Series in Statistical and Probabilistic Mathematics},
  volume        = {43},
  publisher     = {Cambridge University Press},
  address       = {Cambridge},
  year          = {2017},
}

@book{BollobasBook,
  author        = {Bollob\'as, B\'ela},
  title         = {Random graphs},
  series        = {Cambridge Studies in Advanced Mathematics},
  volume        = {73},
  edition       = {Second},
  publisher     = {Cambridge University Press},
  address       = {Cambridge},
  year          = {2001},
}

@book{vdHofstad,
  author        = {van der Hofstad, Remco},
  title         = {Random graphs and complex networks. {V}ol. 2},
  series        = {Cambridge Series in Statistical and Probabilistic Mathematics},
  volume        = {54},
  publisher     = {Cambridge University Press},
  address       = {Cambridge},
  year          = {2024},
}

@article{Sjostrand,
  author        = {Sj\"{o}strand, Jonas},
  title         = {Making multigraphs simple by a sequence of double edge swaps},
  journal       = {Discrete Math.},
  volume        = {344},
  number        = {5},
  pages         = {112328},
  year          = {2021},
}

@book{Gut,
  author        = {Gut, Allan},
  title         = {Probability: a graduate course},
  series        = {Springer Texts in Statistics},
  edition       = {Second},
  publisher     = {Springer},
  address       = {New York},
  year          = {2013},
}

@misc{Mordant2026,
  author        = {Mordant, Gilles},
  title         = {A Central Limit Theorem for the Random Assignment Problem},
  year          = {2026},
  eprint        = {2608.05123},
  note          = {arXiv:2608.05123},
  archiveprefix = {arXiv},
  primaryclass  = {math.PR}
}

@article{MezardParisi1985,
  author  = {M{\'e}zard, Marc and Parisi, Giorgio},
  title   = {Replicas and Optimization},
  journal = {Journal de Physique Lettres},
  volume  = {46},
  number  = {17},
  pages   = {L771--L778},
  year    = {1985}
}

@article{MezardParisi1986,
  author  = {M{\'e}zard, Marc and Parisi, Giorgio},
  title   = {Mean-Field Equations for the Matching and the Travelling Salesman Problems},
  journal = {Europhysics Letters},
  volume  = {2},
  number  = {12},
  pages   = {913--918},
  year    = {1986}
}

@article{Wastlund2012,
  author  = {W{\"a}stlund, Johan},
  title   = {Replica Symmetry of the Minimum Matching},
  journal = {Annals of Mathematics},
  volume  = {175},
  number  = {3},
  pages   = {1061--1091},
  year    = {2012}
}

@article{Aldous2001,
  author  = {Aldous, David J.},
  title   = {The {$\zeta(2)$} Limit in the Random Assignment Problem},
  journal = {Random Structures \& Algorithms},
  volume  = {18},
  number  = {4},
  pages   = {381--418},
  year    = {2001},
  doi     = {10.1002/rsa.1015}
}
\end{document}